\newtheorem{dummy}{dummy}[section]
\newtheorem{lemma}[dummy]{Lemma}
\newtheorem{theorem}[dummy]{Theorem}
\newtheorem*{untheorem}{Main Theorem}
\newtheorem{proposition}[dummy]{Proposition}
\theoremstyle{definition}
\newtheorem{definition}[dummy]{Definition}
\newtheorem{notation}[dummy]{Notation}
\newtheorem{example}[dummy]{Example}
\newtheorem{remark}[dummy]{Remark}
\newcommand{\bA}{\mathbb{A}}
\newcommand{\bC}{\mathbb{C}}
\newcommand{\bG}{\mathbb{G}}
\newcommand{\bH}{\mathbb{H}}
\newcommand{\bN}{\mathbb{N}}
\newcommand{\bQ}{\mathbb{Q}}
\newcommand{\bR}{\mathbb{R}}
\newcommand{\bZ}{\mathbb{Z}}
\newcommand{\bS}{\mathbb{S}}
\newcommand{\cA}{\mathcal{A}}
\newcommand{\cC}{\mathcal{C}}
\newcommand{\cI}{\mathcal{I}}
\newcommand{\cK}{\mathcal{K}}
\newcommand{\cL}{\mathcal{L}}
\newcommand{\cN}{\mathcal{N}}
\newcommand{\cO}{\mathcal{O}}
\newcommand{\Pic}{\mathrm{Pic}}
\newcommand{\wt}[1]{\widetilde{#1}}
\newcommand{\Spec}{\mathrm{Spec}\,}
\newcommand{\Hom}{\mathrm{Hom}}
\renewcommand{\mod}{\mathrm{mod}}
\newcommand{\Div}{\mathrm{Div}}
\renewcommand{\log}{\mathrm{log}}
\newcommand{\gp}{\mathrm{gp}}
\newcommand{\an}{\mathrm{an}}
\renewcommand{\top}{\mathrm{top}}
\newcommand{\klog}{\mathrm{klog}}
\newcommand{\ket}{\mathrm{ket}}
\newcommand{\irs}[1]{\hspace{-3pt}\radice[\infty]{#1}}
\newcommand{\FP}{\mathrm{FP}}
\newcommand{\id}{\mathrm{id}}
\renewcommand{\wt}{\mathrm{wt}}
\newcommand{\Coh}{\mathrm{Coh}}
\renewcommand{\Im}{\mathrm{Im}}
\newcommand{\Par}{\mathrm{Par}}
\newcommand{\wti}{\widetilde}
\newcommand{\Qcoh}{\mathrm{Qcoh}}
\newcommand\radice[2][\relax]{\hspace{-1.5pt}\sqrt[\uproot{2}#1]{#2}}
\newcommand{\Preo}{(\mathrm{PreOrd})}
\newcommand{\op}{\mathrm{op}}
\newcommand{\res}{\mathrm{Res}}
\newcommand{\ind}{\mathrm{Ind}}
\newcommand{\Mod}{\mathrm{Mod}}
\newcommand{\Sym}{\mathrm{Sym}}
\begin{document}

\title[Parabolic sheaves on the Kato-Nakayama space]{Parabolic sheaves with real weights  \\ as  sheaves on the Kato-Nakayama space}

\author[Mattia Talpo]{Mattia Talpo}
\address{Department of Mathematics\\
Simon Fraser University\\
8888 University Drive\\
Burnaby BC\\
V5A 1S6 Canada, and Pacific Institute for the Mathematical Sciences\\ 4176-2207 Main Mall \\ Vancouver BC\\ V6T 1Z4 Canada}

\email{\href{mailto:mtalpo@sfu.ca}{mtalpo@sfu.ca}}

\keywords{Log analytic space, parabolic sheaf, Kato-Nakayama space}

\subjclass[2010]{14D20 (primary)}

\maketitle

\begin{abstract}
We define quasi-coherent parabolic sheaves with real weights on a fine saturated log analytic space, and explain how to interpret them as quasi-coherent sheaves of modules on its Kato-Nakayama space. This recovers the description as sheaves on root stacks of \cite{borne-vistoli} and \cite{TV} for rational weights, but also includes the case of arbitrary real weights.
\end{abstract}

\setcounter{tocdepth}{1}
\tableofcontents

\section{Introduction}

The aim of the this paper is to present a correspondence between parabolic sheaves with real weights on a fine saturated log analytic space, and certain sheaves of modules on its Kato-Nakayama space. This was inspired by the corresponding equivalence for rational weights and root stacks \cite{borne-vistoli, TV}, and by the analogy between the infinite root stack and the ``profinite completion'' of the Kato-Nakayama space \cite{knvsroot, TVnew}.

Parabolic bundles were first defined by Mehta and Seshadri on curves in the '80s \cite{metha-seshadri}, and then studied in increasingly more general situations by several authors \cite{maruyama, mochizuki, iyer-simpson1, borne}, until Borne and Vistoli \cite{borne-vistoli} connected them to logarithmic structures, and gave a general definition (for rational weights with bounded denominator) on a coherent log scheme. They also constructed an equivalence of abelian categories between parabolic sheaves with weights in a fixed Kummer extension, and quasi-coherent sheaves on the corresponding stack of roots. Versions of this correspondence were earlier investigated by Biswas \cite{biswas} and Borne \cite{borne}, and it was further generalized to arbitrary rational weights by the author and Vistoli in \cite{TV}.

Assume for simplicity in this introduction that $X$ is a scheme of finite type over $\bC$, whose log structure is determined by a single effective Cartier divisor $D\subseteq X$, or, equivalently, by the line bundle with section $(L,s):=(\cO_X(D),1_D)$ (this data also gives a log analytic space, by analytifying $X$ and $D$).
In the algebraic setting, parabolic sheaves with weights in the group $\frac{1}{n}\bZ$ are sequences of quasi-coherent sheaves $\{E_{a}\}$ on $X$ for $a\in \frac{1}{n}\bZ$, with: a system of compatible maps $E_a\to E_b$ every time $b\geq a$, isomorphisms $E_{a+1}\cong E_a\otimes_{\cO_X} L$ for every $a$, and such that $E_a\to E_{a+1}\cong E_a\otimes_{\cO_X} L$ coincides with multiplication by $s\in \Gamma(L)$.
Clearly, such an object is completely determined by its restriction to the segment $[0,1]$, i.e. the diagram
\vspace{.2cm}
$$
\xymatrix@R=1em{
E_0\ar[r] & E_{\frac{1}{n}}\ar[r] & \cdots \ar[r] &E_{\frac{n-1}{n}}\ar[r] & E_1\cong E_0\otimes_{\cO_X} L.
}
$$
For a general fine saturated log scheme, a parabolic sheaf is also a system of sheaves with maps, indexed by a constructible sheaf of (possibly higher-rank) lattices.

The root stack $\radice[n]{X}$ parametrizes roots of the pair $(L,s)$, i.e. a morphism $T\to \radice[n]{X}$ corresponds to a map $f\colon T\to X$ and a pair $(N,t)$ on $T$ consisting of a line bundle with a section, with an isomorphism $(N,t)^{\otimes n}\cong f^*(L,s)$. There is a coarse moduli space morphism $\pi\colon \radice[n]{X}\to X$, which is an isomorphism outside of $D$. Points in the preimage of $D$ have a non-trivial stabilizer, the group of $n$-th roots of unity $\mu_n$. As for parabolic sheaves, the definition can be generalized to fine saturated log schemes. The main result of \cite{borne-vistoli}, in this particular case, says that there is an equivalence of abelian categories between parabolic sheaves with weights in $\frac{1}{n}\bZ$ and quasi-coherent sheaves on $\radice[n]{X}$.

The functor $\Phi_n\colon \Qcoh(\radice[n]{X})\to \Par(X,\frac{1}{n}\bZ)$ is easily described as follows: for a given $F\in \Qcoh(\radice[n]{X})$, one sets $\Phi_n(F)_{\frac{1}{n}k}:=\pi_*(F\otimes_{\cO_{\radice[n]{X}}} \cN^{\otimes k})$, where $\cN$ is the universal ``root line bundle'' on $\radice[n]{X}$. Moreover, for $\frac{1}{n}k \leq \frac{1}{n}k'$, there is a natural map $\cN^{\otimes k}\to \cN^{\otimes k'}$ given by the appropriate power of the global section $t$ of $\cN$, that induces a morphism $\Phi_n(F)_{\frac{1}{n}k}\to \Phi_n(F)_{\frac{1}{n}k'}$. The projection formula for $\pi$ assures that the other properties in the definition of a parabolic sheaf are satisfied. Heuristically, the presence of the non-trivial stabilizers $\mu_n$ along the divisor (and its action on fibers of sheaves) allows to encode the different pieces of the parabolic sheaves in a single sheaf on the root stack.

If we allow the index of the root to vary, these equivalences are compatible with the natural projections $\radice[m]{X}\to \radice[n]{X}$ for $n\mid m$, and in fact there is an analogous statement at the limit, on the infinite root stack $\radice[\infty]{X}=\varprojlim_n \radice[n]{X}$ \cite[Theorem 7.3]{TV}. This ``stacky'' point of view allows to treat parabolic sheaves as ``plain'' quasi-coherent sheaves on a slightly more complicated object, and has been useful in several instances (see for example \cite{iyer-simpson},  \cite{biswas-dhillon} and  \cite{talpo}).

In the original definition of Mehta and Seshadri, as well as in later instances, parabolic sheaves are allowed to have arbitrary real weights. In the situation of a scheme $X$ with a divisor $D$ as above, a parabolic sheaf with real weights is going to be a system of indexed sheaves as in the rational case, but the index group is the set of real numbers $\bR$. Finitely presented sheaves (appropriately defined) will be still determined by finitely many sheaves $E_r$ for $r\in [0,1]$ and the maps between them, but for general quasi-coherent sheaves, this is not the case.

As can certainly be expected, irrational weights are hard to handle in a purely algebraic manner. In this paper, we extend to real weights the correspondence with sheaves on root stacks, but using the Kato-Nakayama space instead. This forces us to work over the complex numbers.

Recall that the Kato-Nakayama space $X_\log$ is a topological space with a continuous proper projection $X_\log\to X$, where $X$ is now a (fine saturated) log analytic space. Morally, this construction replaces the log structure of $X$ with non-trivial topology in $X_\log$. For example, in the situation above, where the log scheme is determined by a single smooth divisor $D\subseteq X$ in a smooth analytic space $X$, the space $X_\log$ is the ``real oriented blowup'' of $D$ in $X$.

The use of the Kato-Nakayama space $X_\log$ is heuristically justified by the fact that the infinite root stack is a sort of ``profinite algebraic incarnation'' of the former: there is a morphism $X_\log\to \radice[\infty]{X}_\top$ to the topological realization of $\radice[\infty]{X}$, which is a ``profinite equivalence'' \cite[Theorem 6.4]{knvsroot}. The fiber of $X_\log\to X$ over a point $x$ can be identified with a real torus $(\bS^1)^r$, and the fiber of $\radice[\infty]{X}_\top\to X$ with $B\widehat{\bZ}^r$, where $r$ is the ``rank of the log structure'' at $x$. Thinking of $\bS^1$ as $B\bZ$, the morphism between the fibers $B\bZ^r\to B\widehat{\bZ}^r\cong \widehat{B\bZ^r}$ is the map to the profinite completion.
Morally, while the profinite monodromy (i.e. stabilizer group) in the fibers of $\radice[\infty]{X}\to X$ can only allow for rational weights in the parabolic sheaves, the fibers of $X_\log\to X$ have ``monodromy'' (i.e. fundamental group) with elements of infinite order, and the $\bS^1$s in the fibers can also encode real weights.

Assume that we are still in the simple situation of a log structure given by a divisor $D\subseteq X$ outlined above, and fix a submonoid $\Lambda$ of $\bR_+$, the non-negative real numbers, containing $\bN$. In order to make the heuristic of the previous paragraph precise, we adapt a procedure of Ogus \cite{ogus} to construct on $X_\log$ a sheaf of rings $\cO_\Lambda$, that extends the pullback of $\cO_X$ by adding sections of the form $f^\lambda$, where $f$ is a local equation of $D$ and $\lambda\in \Lambda$ (if $\Lambda=\frac{1}{n}\bN$, we are extracting $n$-th roots, in analogy with root stacks). The intuition for why this can be done, is that passing to $X_\log$ somewhat corresponds to extracting a logarithm of these local sections $f$, and if we have a logarithm we can also define $f^\alpha=\exp(\alpha\, \log(f))$ for any $\alpha \in \bR_+$.

After tensoring $\cO_\Lambda$ over the pullback of $\cO_X$ with the ``structure sheaf'' $\cO_X^\log$ of $X_\log$ (see \cite[Section 1]{illusie-kato-nakayama}), we obtain a sheaf of rings $\cO_\Lambda^\log$ on $X_\log$, that allows us to encode parabolic sheaves with weights in $\Lambda$ as quasi-coherent sheaves. The following is our main result.

\begin{untheorem}[Theorem \ref{thm:main}]
Let $X$ be a fine saturated log analytic space with log structure $\alpha\colon M\to \cO_X$, and $\Lambda$ a $\overline{M}^\gp$-saturated quasi-coherent sheaf of monoids, with $\overline{M}\subseteq \Lambda\subseteq \overline{M}_\bR=\overline{M}\otimes \bR_+$ (here, as usual, $\overline{M}$ denotes the sheaf of monoids $M/\cO_X^\times$).

Then we have an exact equivalence of categories
$$
\Qcoh(\cO_\Lambda^\log)\cong \Par(X,\Lambda)
$$
between quasi-coherent sheaves of $\cO_\Lambda^\log$-modules on $X_\log$ and quasi-coherent parabolic sheaves on $X$ with weights in $\Lambda$.
\end{untheorem}

We remark that quasi-coherence in this setting is a less transparent condition than in the algebraic case (see Remark \ref{rmk:qcoh} and the discussion in (\ref{sec:sheaves.of.modules})). The equivalence restricts to finitely presented sheaves on both sides, that are perhaps more natural objects. Moreover, this equivalence is compatible with the ones for root stacks of \cite{borne-vistoli} and \cite{TV} via the natural maps $X_\log\to \radice[n]{X}_\top$, as we verify in (\ref{sec:comparison.root}).

We plan to make use of this equivalence in future work, in at least a couple of directions. First, there are probably interesting interactions between these parabolic structures and integrable logarithmic connections, through Ogus' version of the Riemann-Hilbert correspondence \cite{ogus}. The sheaves of rings on $X_\log$ that he uses are closely related to the ones we use, and in fact his work on this subject was a fundamental inspiration. Second, the point of view advocated in this paper might be useful to study moduli spaces of parabolic sheaves with arbitrary real weights, and in particular for questions related to the variations of the weights. In moduli problems where there is a stability parameter, very often one has a wall-and-chamber decomposition of the space of possible parameters, and the moduli spaces undergo interesting transformations as the parameter crosses a wall. In the setting of parabolic sheaves, some versions of these questions have been investigated in \cite{boden, thaddeus}. Finally, it would be interesting to investigate whether parabolic sheaves with real weights exhibit some property of invariance under some simple kinds of log blowups, as the ones with rational weights do \cite[Proposition 3.9]{mckay}.

\subsection*{Outline}

Let us describe the contents of each section of the paper. We being by briefly recalling some basics about log schemes and log analytic spaces, and the construction of root stacks and Kato-Nakayama spaces in Section \ref{sec:preliminaries}. In Section \ref{sec:parabolic.sheaves} we extend the definition of parabolic sheaves on a log scheme of \cite{borne-vistoli} to the case of arbitrary real weights. We also include a brief reminder about the proof of the correspondence with quasi-coherent sheaves on root stacks, that we will adapt to the different context when proving Theorem \ref{thm:main}.
We then proceed in Section \ref{sec:sheaves.on.log} to describe how to equip the Kato-Nakayama space $X_\log$ of a fine saturated log analytic space $X$ with several sheaves of rings (depending on the monoid encoding the weights), and we discuss quasi-coherent and finitely presented sheaves on $X_\log$. Finally, Section \ref{sec:correspondence} contains the proof of our main result. We also describe how the correspondence with sheaves on the Kato-Nakayama space is related to the one on root stacks, via the natural map between the two objects.

\subsection*{Acknowledgements}

I am grateful to Niels Borne and Angelo Vistoli for allowing me to include in this work the basics on parabolic sheaves with real weights, that they had partly worked out in a preliminary version of \cite{borne-vistoli}. I am also happy to thank Clemens Koppensteiner for useful conversations, and the anonymous referee for several helpful comments and corrections.

This work was supported by the University of British Columbia, the Pacific Institute for the Mathematical Sciences and Simon Fraser University.

\subsection*{Notations and conventions}

All monoids will be commutative. The terminology ``{toric}'' for a monoid will mean fine, saturated and sharp (and hence torsion-free). If $P$ is a monoid, we will denote by $P^\gp$ the associated group, and by $P^+=P\setminus \{0\}$. If $P$ is a monoid and $S\subseteq P$ is a subset, $\langle S\rangle\subseteq P$ will denote the ideal of $P$ generated by $S$ (recall that $I\subseteq P$ is an ideal if $i+p\in I$ for every $i\in I$ and $p\in P$). We denote by $\bR_+$ the commutative monoid of non-negative real numbers, where the operation is addition, and by $\bR_{\geq 0}$ the monoid with the same underlying set, but where the operation is multiplication. If $P$ is a monoid and $X$ is a topological monoid, we will denote by $X(P)$ the topological monoid given by $\Hom(P,X)$.

We will typically use the same symbol for a locally finite type scheme over $\bC$, its associated complex analytic space and the underlying topological space of the latter (i.e. the set of closed points of the scheme), occasionally adding a subscript ``$\an$'' for analytifications. If $P$ is a finitely generated monoid, we will denote by $\bC(P)$ the complex analytic space $(\Spec \bC[P])_\an$.

Sheaves and stacks on a complex analytic space $X$ will always be sheaves and stacks on the classical analytic site.
A quasi-coherent sheaf on a complex analytic space $X$ will be a sheaf of $\cO_X$-modules that can locally be written as a filtered colimit of coherent sheaves, as in \cite[Section 2.1]{conrad}.
If $(T,\cO_T)$ is a ringed space, we will denote by $\Mod(\cO_T)$ the category of sheaves of $\cO_T$-modules on $T$, and by $\Mod_{\cO_T}$ the stack over (the classical site of) $T$, of sheaves of $\cO_T$-modules. A sheaf of $\cO_T$-modules will be called {finitely presented} if locally on $T$ it is the cokernel of a morphism of free $\cO_T$-modules of finite rank.

If $T$ is a topological space and $G$ is a topological monoid, or group, etc. we denote by $G_T$ the sheaf of continuous functions towards $G$ on opens of $T$ (with the induced structure of a sheaf of monoids, or groups, etc.). If $S$ is a set, the locally constant sheaf with fiber $S$ on the space $T$ will be denoted by $\underline{S}_T$. This will also have the induced structure, if $S$ is a monoid, or group, etc.

\section{Preliminaries}\label{sec:preliminaries}

In this section we briefly recall the basics of log schemes and log analytic spaces, root stacks and the Kato-Nakayama space. For more details, we refer the reader to \cite[Appendix]{knvsroot} and references therein.

\subsection{Log schemes and analytic spaces}

A log scheme is a scheme $X$ equipped with a sheaf of monoids $M$ on its small \'etale site, and a homomorphism $\alpha\colon M\to \cO_X$ (where $\cO_X$ is equipped with multiplication), that induces an isomorphism $\alpha|_{\alpha^{-1}\cO_X^\times}\colon \alpha^{-1}\cO_X^\times\cong \cO_X^\times$. Assuming that $M$ is a sheaf of integral monoids, this additional data is equivalent to a ``Deligne-Faltings structure'' (abbreviated by DF from now on), i.e. a symmetric monoidal functor $L\colon A\to \Div_X$ with trivial kernel (meaning that if $L(a)$ is an invertible object, then $a=0$), where $A$ is a sheaf of sharp monoids on $X$ and $\Div_X$ is the stack of line bundles with a section on the small \'etale site of $X$. Given a log structure $\alpha\colon M\to \cO_X$, the functor $L$ is obtained by modding out in the stacky sense by the action of $\cO_X^\times$ (so in particular the sheaf $A$ is the quotient $\overline{M}=M/\cO_X^\times$).

A morphism of log schemes $f\colon X\to Y$ is a morphism of schemes, together with a homomorphism of sheaves of monoids $f^{-1}M_Y\to M_X$ that is compatible with the maps to the structure sheaves. A morphism of log schemes is strict if this last homomorphism is an isomorphism (i.e. the log structure of $X$ is obtained by that of $Y$ by pullback). There is an analogous description of morphisms using DF structures.

If $P$ is a toric monoid, then the scheme $\Spec \bZ[P]$ (or $k[P]$ if we are working over a field $k$)  has a canonical log structure, determined by the homomorphism of monoids $P\to \bZ[P]$ in the following manner. Starting from the induced morphism of sheaves of monoids $a\colon \underline{P}_X \to \cO_{\Spec \bZ[P]}$, one obtains a log structure by forming the pushout $M=\underline{P}_X \oplus_{a^{-1}\cO_{\Spec \bZ[P]}^\times} \cO_{\Spec \bZ[P]}^\times$ in the category of sheaves of monoid on $X$, and considering the induced homomorphism of sheaves of monoids $\alpha\colon M\to \cO_{\Spec \bZ[P]}$. More generally, a Kato chart for the log scheme $X$ is a homomorphism of monoids $P\to \cO(X)$ that induces the log structure $\alpha\colon M\to \cO_X$ via the procedure just outlined. Equivalently, it is a strict morphism $X\to \Spec \bZ[P]$.

We will work with fine saturated log schemes, those for which, locally for the \'etale topology, we can find charts as above with $P$ integral, finitely generated and saturated (one can moreover take it to be sharp - this follows from example from \cite[Proposition 2.1]{Ols}). All of this also applies word for word to complex analytic spaces, more generally to ringed spaces or even just spaces equipped with a sheaf of monoids \cite{logstr}, where instead of the \'etale topology we use the ``classical'' topology. In particular, a log structure on a locally finite type scheme $X$ over $\bC$ induces a log structure on the analytification $X_\an$ (see for example \cite[Section 2.5]{TVnew}).

\subsection{Root stacks}\label{sec:root.stacks}

Let $X$ be a fine saturated log scheme or log analytic space. Assume that $\overline{M}\to B$ is a system of denominators, in the language of \cite{borne-vistoli}, i.e. it is an injective map of Kummer type (every section of $B$ locally has a multiple in $\overline{M}$), and $B$ has local charts by finitely generated monoids (a homomorphism of monoids $Q\to B(X)$ with $Q$ finitely generated, such that the induced morphism of sheaves $f\colon \underline{Q}_X\to B$ is a cokernel of sheaves of monoids, i.e. $B\cong \underline{Q}_X/\ker f$). A typical example is the inclusion $\overline{M}\to \frac{1}{n}\overline{M}$.

The root stack with respect to $B$, denoted $\radice[B]{X}$, is the stack over $X$ parametrizing liftings of $L\colon \overline{M}\to \Div_X$ to a symmetric monoidal functor $B\to \Div_X$.
It is a tame algebraic stack (Deligne-Mumford in characteristic $0$), with a proper quasi-finite coarse moduli space morphism $\radice[B]{X}\to X$. Roughly, $\radice[B]{X}$ is the stack obtained by extracting roots out of sections of $\overline{M}$, with respect to indices dictated by the sections of the sheaf of monoids $B$ (for instance if $B=\frac{1}{n}\overline{M}$, we are extracting $n$-th roots of all sections of $\overline{M}$).

Locally where $\overline{M}\to B$ has a chart $P\to Q$ (meaning that this is a Kummer homomorphism, $P\to \overline{M}(X)$ and $Q\to B(X)$ are charts, and the obvious square commutes), and the chart for $\overline{M}$ is a Kato chart (i.e. it lifts to $P\to M(X)$), the root stack $\radice[B]{X}$ is isomorphic to the quotient stack $$[\underline{\Spec}_X (\cO_X[P^\gp]\otimes_{\bZ[P]}\bZ[Q])/\widehat{Q}],$$ where the group $\widehat{Q}=\Hom(Q^\gp,\bG_m)$ acts on $$\underline{\Spec}_X (\cO_X[P^\gp]\otimes_{\bZ[P]}\bZ[Q])\cong \underline{\Spec}_X (\cO_X[P^\gp])\times_{\Spec \bZ[P]}\Spec \bZ[Q]$$ via the natural grading of the second factor, and via the homomorphism $\widehat{Q}\to \widehat{P}$ on the first factor \cite[Remark 4.14]{borne-vistoli}. In particular, quasi-coherent sheaves on $\radice[B]{X}$ can be identified with quasi-coherent sheaves of $\cO_X[P^\gp]\otimes_{\bZ[P]}\bZ[Q]$-modules on $X$ that have a $Q^\gp$-grading, compatible with the module structure.

This quotient presentation is more convenient to describe the correspondence with parabolic sheaves, but there is a perhaps simpler one, where the group that we quotient by is finite. Precisely, in presence of a Kato chart as above, there is an isomorphism
\begin{equation}\label{eq:root.stack}
\radice[B]{X}\cong [(X\times_{\Spec \bZ[P]}\Spec \bZ[Q])/\mu_{Q/P}],
\end{equation}
where $\mu_{Q/P}$ is the Cartier dual of the quotient $Q^\gp/P^\gp$, acting on $\Spec \bZ[Q]$ in the natural manner.

For $n\in \bN$, denote $\radice[\frac{1}{n}\overline{M}]{X}$ by $\radice[n]{X}$. These root stacks form an inverse system: if $n\mid m$ there is a natural map $\radice[m]{X}\to \radice[n]{X}$. The inverse limit is the infinite root stack $\irs{X}:=\varprojlim_n \radice[n]{X}$.

\subsection{The Kato-Nakayama space}\label{sec:kn}

Let $X$ be a log analytic space. Its Kato-Nakayama space $X_\log$ is a topological space (in good cases, a manifold with corners), defined as follows. As a set, elements of $X_\log$ are pairs $(x,\phi)$ consisting of a point $x\in X$ and a homomorphism of groups $\phi\colon M_x^\gp\to \bS^1$ such that $\phi(f)=\frac{f(x)}{|f(x)|}$ for every $f\in \cO_{X,x}^\times\subseteq M_x^\gp $.

If $X=\bC(P)=(\Spec\bC[P])_\an$ for a fine monoid $P$, then the space $X_\log$ can be identified with $\Hom(P,\bR_{\geq 0}\times \bS^1)$. More generally, if the log analytic space $X$ has a Kato chart $X\to \bC(P)$, then $X_\log$ can be identified with a closed subset of the topological space $X\times \Hom(P^\gp,\bS^1)$ (where $\Hom(P^\gp,\bS^1)$ has its natural topology), and we can equip it with the induced topology. This can be shown to be independent of the particular Kato chart that we choose, and we obtain a topology on the set $X_\log$ for a general $X$.

The natural projection $\tau\colon X_\log\to X$ that sends $(x,\phi)$ to $x$ is continuous and proper. The fiber over a point $x\in X$ can be identified with the space $\Hom(\overline{M}_x^\gp,\bS^1)$, which is non-canonically isomorphic to a real torus $(\bS^1)^r$, where $r$ is the rank of the (finitely generated) free abelian group $\overline{M}_x^\gp$. If $X=\bC(P)$, the map $X_\log\to X$ is identified with $\Hom(P,\bR_{\geq 0}\times \bS^1)\to \bC(P)=\Hom(P,\bC)$, sending a homomorphism $P\to \bR_{\geq 0}\times \bS^1$ to the composite with $\bR_{\geq 0}\times \bS^1\to \bC$, defined as $(r,a)\mapsto r\cdot a$. If the log structure of $X$ is given by a normal crossings divisor $D\subseteq X$, then the space $X_\log$ is the ``real oriented blowup'' of $X$ along $D$.

In the following we will also make us of a covering space $\wti{X}_\log\to X_\log$, that can be constructed in presence of a Kato chart $X\to \bC(P)$. For $\bC(P)$ itself, this is defined as $\wti{\bC(P)}_\log:=\Hom(P,\bH)$, where $\bH$ is the ``closed complex half-plane'' $\bR_{\geq 0}\times \bR\subseteq \bC$ (note that usually $\bH$ denotes the open half-plane), and the map $\wti{\bC(P)}_\log\to \bC(P)_\log=\Hom(P,\bR_{\geq 0}\times \bS^1)$ is given by composing with the map $\bH\to \bR_{\geq 0}\times \bS^1$ described as $(x,y)\mapsto (x, e^{iy})$. For a general $X$, the map $\wti{X}_\log\to X_\log$ is obtained by base change along the Kato chart $X\to \bC(P)$.

In both cases, $\wti{X}_\log\to X_\log$ is a covering space, with group of deck transformations given by $\bZ(P):=\Hom(P,\bZ)$ (or, more precisely, $\Hom(P,\bZ(1))$ where $\bZ(1)=2\pi i\bZ$ - we will systematically omit these ``Tate twists'' in the notation). Note that this can be non-canonically identified with $\bZ^n$, via $\Hom(P,\bZ)=\Hom(P^\gp, \bZ)$ and the fact that $P^\gp\cong \bZ^n$ for some $n$. This covering space should be thought of as an ``atlas'' of $X_\log$, the analogue of the scheme $X\times_{\Spec\bZ[P]}\Spec \bZ[Q]$ in the local description (\ref{eq:root.stack}) for root stacks.

In fact, if $Q=\frac{1}{n}P$, and $P\to Q= \frac{1}{n}P$ is the inclusion, the group $\mu_{Q/P}$ is isomorphic to $\mu_n(P):=\Hom(P,\bZ/n\bZ)\cong \mu_n^r$, where $r$ is the rank of $P^\gp$, and the group of deck transformations $\bZ(P)=\Hom(P,\bZ)$ of the cover $\wti{X}_\log\to X_\log$ naturally maps to $\mu_n(P)$. There is also a canonical map $\wti{X}_\log\to X\times_{\bC(P)}\bC(\frac{1}{n}P)$ that is $(\bZ(P)\to \mu_n(P))$-equivariant, and this gives a canonical morphism from $X_\log$ to the root stack $\radice[n]{X}$ (more precisely, to the underlying topological stack). If $X=\bC(P)$, the map $\wti{\bC(P)}_\log=\Hom(P,\bH)\to \bC(\frac{1}{n}P)=\Hom(\frac{1}{n}P,\bC)$ is given by composing $\frac{1}{n}P\cong P \to \bH$ with $\bH\to \bH$ given by $(x,y)\mapsto (\radice[n]{x}, y/n)$ (this steps ``compensates'' the identification $\frac{1}{n}P\cong P$), and then with $\bH\to \bC$ given by $(x,y)\mapsto x\cdot e^{iy}$.

The construction of this map can be globalized (see \cite{knvsroot} and \cite{TVnew}), so for every fine saturated log analytic space $X$ and every $n$ (including $n=\infty$) there is a canonical morphism of topological stacks $\phi_n\colon X_\log\to \radice[n]{X}_\top$. The rough idea here is that on $X_\log$ we have logarithms of sections of $\overline{M}$, so in particular we also have $n$-th roots of such sections, for any $n$, since $(\exp(\frac{1}{n}\log (z)))^n=z$.

On $X_\log$ there is a sheaf of rings $\cO_X^\log$, that is generated over $\tau^{-1}\cO_X$ by formal logarithms of sections of the sheaf $M$. Its precise definition will be recalled later (Section \ref{sec:rings.on.kn}).

\section{Parabolic sheaves with real weights}\label{sec:parabolic.sheaves}

For this section, $X$ will be either a fine saturated log scheme or log analytic space.

\subsection{Sheaves of weights}

As recalled in (\ref{sec:root.stacks}), to define root stacks and parabolic sheaves with finitely generated weights, one considers an injective map of Kummer type $\overline{M} \to B$ with $B$ a coherent  sheaf of monoids (i.e. admitting local charts by finitely generated monoids).
The root stack $\radice[B]{X}$ parametrizes extensions $N\colon B\to \Div_X$ of the DF structure of $X$, and parabolic sheaves are cartesian functors $E\colon B^\wt\to \Qcoh_X$, where $B^\wt$ is the ``category of weights'' associated with $B$: its objects are sections of $B^\gp$, and an arrow $s\to t$ is a section $b$ of $B$ such that $t=s+b$ (see \cite[Section 5]{borne-vistoli}).
Note that since $\overline{M}\to B$ is Kummer, we can see $B$ as a subsheaf of $B_\bQ\cong \overline{M}_\bQ$ (for a monoid $P$, we denote by $P_\bQ$ the positive rational cone spanned by $P$ in $P^\gp\otimes_\bZ \bQ$, and we use the same notation for the analogous construction on sheaves of monoids). In the limit, when we consider the infinite root stack $\irs{X}$, the sheaf $B$ is $\overline{M}_\bQ$ itself.

Here we want to generalize these concepts to the case where we have weights in a sheaf of monoids $\Lambda$ with $\overline{M} \subseteq \Lambda\subseteq \overline{M}_\bR$, where $\overline{M}_\bR=$``$\overline{M} \otimes \bR_+$'' is the positive \emph{real} cone spanned by $\overline{M}$ in $\overline{M}^\gp\otimes_\bZ\bR$. In other words, sections of $\overline{M}_\bR$ are sums of sections of the form $m\otimes r$ in $\overline{M}^\gp\otimes_\bZ\bR$, with $m\in \overline{M}$ and $r\in \bR_+$. Note that $\Lambda$ might well not be finitely generated as a monoid (this already happens for $\overline{M}_\bQ$). The concept of \emph{parabolic sheaves with real weights} that we will define is a generalization of earlier definitions (for example \cite{metha-seshadri, maruyama, iyer-simpson}).

Since finitely presented parabolic sheaves will be defined as the ones obtained by applying an induction functor via a ``fine sub-system of weights'' (see Definition \ref{def:fine.charts.weight} below) that does not have to be a sheaf of monoids, we will discuss sheaves of weights in a more general context, just requiring that they be sheaves of pre-ordered sets, with an action of the sheaf $\overline{M}^\gp$. Part of what follows is taken from a section in a preliminary version of \cite{borne-vistoli}, that was removed in the final version. I am grateful to A. Vistoli and N. Borne for allowing me to include this material here.

Recall that a pre-ordered set is a set $W$ equipped with a reflexive and transitive relation, that we denote by $\leq$ (or $\leq_W$ when we have to specify $W$). Pre-ordered sets form a category $\Preo$, where morphisms $W\to W'$ are increasing functions $f\colon W\to W'$, i.e. such that $f(w)\leq_{W'} f(w')$ if $w\leq_W w'$. Pre-ordered sets can also be seen as small categories with at most one morphism (in each direction) between any two objects.

\begin{definition}
Let $P$ be an integral monoid. A \emph{weight system} for $P$ is a pre-ordered set $W$ with an action of $P^\gp$, that we denote by $(p,w)\mapsto p+w$, such that: (a) if $w\leq w'$, then for every $p\in P^\gp$ we have $p+w\leq p+w'$, and (b) for every $p\in P$ we have $p+w\geq w$.
\end{definition}

Let $\cC$ be a site, that we will later specify to be the small \'etale site of a scheme, or the classical site of a complex analytic space.

\begin{definition}
A \emph{pre-sheaf of pre-ordered sets} on $\cC$ is a functor $W\colon \cC^\op\to \Preo$. A \emph{sheaf of pre-ordered sets} on $\cC$ is a pre-sheaf of pre-ordered set, which is furthermore a sheaf of sets, and such that if $w, w' \in W(U)$ are such that $f_i^*w\leq f_i^*w'$ for a covering $\{f_i\colon U_i\to U\}$ in $\cC$, then $w\leq w'$ in $W(U)$.
\end{definition}

One can sheafify a pre-sheaf of pre-ordered sets to a sheaf of pre-ordered sets in a unique way.

Let now $X$ be a log scheme or log analytic space, with DF structure $L\colon \overline{M}\to \Div_X$.

\begin{definition}
A \emph{pre-weight system} on $X$ is a pre-sheaf of pre-ordered sets $W$, together with an action of $\overline{M}^\gp$, such that for every $U$ the set $W(U)$ is a weight system for the monoid $\overline{M}(U)$. A pre-weight system is a \emph{weight system} if $W$ is a sheaf of pre-ordered sets.
\end{definition}

\begin{example}\label{example:weight system}
Assume that $B$ is a sheaf of integral monoids containing $\overline{M}$. Then there is a natural partial order on $B^\gp$, by declaring that $b\leq b'$ if and only if there exists $c\in B$ such that $b'=b+c$. Moreover, there is an action of $\overline{M}^\gp\subseteq B^\gp$, given by the monoid operation. We will denote the corresponding weight system by $B^\wt$.

If $\overline{M}\to B$ is a Kummer extension, these weight systems $B^\wt$ are the ones appearing in \cite{borne-vistoli}.
\end{example}

\subsection{Diagrams of $\cO$-modules}\label{sec:diagrams}

Let $X$ be a scheme or analytic space, with a symmetric monoidal functor $L\colon P\to \Div(X)$ for an integral monoid $P$. We denote the image of $p\in P$ by $(L_p,s_p)$. Recall that the functor $L$ can be extended to a functor $P^\gp\to \Pic(X)$ (that we continue to denote by $L$), by sending $p-p'$ to the invertible sheaf $L_p\otimes_{\cO_X} L_{p'}^\vee$ (see \cite[Proposition 5.2]{borne-vistoli}). Denote by $\lambda_{a,b}\colon L_{a+b}\cong  L_a\otimes_{\cO_X} L_b$ and $\epsilon\colon L_0\cong \cO_X$ the isomorphisms that are part of the data of the symmetric monoidal functor $L$ (as in \cite[Definition 2.1]{borne-vistoli}).

Let $W$ be a weight system for the monoid $P$. The following is the straightforward adaptation of \cite[Definition 5.6]{borne-vistoli} to this more general setting.

\begin{definition}\label{def:parabolic.sheaf}
A \emph{diagram of} $\cO$\emph{-modules} $(E,j^E)$ for the above data is a functor $E\colon W\to \Mod(\cO_X)$, denoted $w\mapsto E_w$ and $(w\leq w')\mapsto E_{(w,w')}$, together with an isomorphism
$$
\rho^E_{a,w}\colon E_{a+w}\cong L_a\otimes_{\cO_X} E_w
$$
for every $a\in P^\gp$ and $w\in W$, such that
\begin{itemize}
\item[(a)] for every $p\in P$ the diagram
$$
\xymatrix@C=2cm{
E_w\ar[r]^{E_{(w,p+w)}}\ar[d]_\cong & E_{p+w}\ar[d]^{\rho_{p,w}^E}  \\
\cO_X\otimes_{\cO_X} E_w\ar[r]^{s_p\otimes \id} & L_p\otimes_{\cO_X} E_w
}
$$
commutes,
\item[(b)] for every $w\leq w'$ in $W$ and $a\in P^\gp$, then the diagram
$$
\xymatrix@C=2cm{
E_{a+w}\ar[r]^{\rho_{a,w}^E} \ar[d]_{E_{(a+w,a+w')}} & L_a\otimes_{\cO_X} E_w\ar[d]^{\id\otimes E_{(w,w')}} \\
E_{a+w'}\ar[r]^{\rho_{a,w'}^E}  & L_a\otimes_{\cO_X} E_{w'}
}
$$
commutes,
\item[(c)] for every $a,b\in P^\gp$ and $w\in W$, the diagram
$$
\xymatrix@C=2cm{
E_{a+b+w}\ar[r]^{\rho_{a+b,w}^E} \ar[d]_{\rho_{a,b+w}^E} & L_{a+b} \otimes_{\cO_X} E_w \ar[d]^{\lambda_{a,b} \otimes \id } \\
L_a\otimes_{\cO_X} E_{b+w}\ar[r]^{\id\otimes \rho_{b,w}^E}  & L_a\otimes_{\cO_X} L_b\otimes_{\cO_X} E_{w}
}
$$
commutes, and
\item[(d)] for every $w\in W$ the composite 
$$
E_w=E_{0+w}\xrightarrow{\rho_{0,w}^E} L_0\otimes_{\cO_X} E_w\xrightarrow{\epsilon \otimes \id} \cO_X\otimes_{\cO_X} E_w 
$$
coincides with the natural isomorphism $E_w\cong \cO_X\otimes_{\cO_X} E_w$.
\end{itemize}
\end{definition}

A morphism of diagrams of $\cO$-modules is a natural transformation $\phi\colon E\to E'$, such that for every $a\in P^\gp$ and $w\in W$, the diagram
$$
\xymatrix@C=2cm{
E_{a+w}\ar[r]^{\rho_{a,w}^E}\ar[d]_{\phi_{a+w}} & L_a\otimes_{\cO_X} E_w\ar[d]^{\id\otimes \phi_w} \\
E'_{a+w}\ar[r]^{{\rho}_{a,w}^{E'}} & L_a\otimes_{\cO_X} E'_w
}
$$
commutes. Diagrams of $\cO$-modules on $X$ with respect to $W$ form an abelian category $\Mod(X,W)$. The structure of abelian category is defined ``component-wise''.

We say that a diagram of $\cO$-modules $E$ is \emph{quasi-coherent} if the functor $E$ has values in $\Qcoh(X)$. We denote by $\Qcoh(X,W)$ the full sub-category of $\Mod(X,W)$ of quasi-coherent diagrams of $\cO$-modules. 

Assume now that $X$ is a log scheme or log analytic space, with DF structure given by $L\colon \overline{M}\to \Div_X$, and assume that we have a weight system $W$ for $\overline{M}$ on $X$.

\begin{definition}
A \emph{diagram of} $\cO$\emph{-modules} for the above data is a cartesian functor $E\colon W\to \Mod_{\cO_X}$, with an isomorphism of $\cO_U$-modules
$$
\rho^E_{a,w}\colon E_{a+w}\cong L_a \otimes_{\cO_U} E_w
$$
for every open $U\to X$ (either an \'etale morphism, or an open immersion of analytic spaces), $a\in \overline{M}^\gp(U)$ and $w\in W(U)$, such that

\begin{itemize}
\item[a)] for every open $U\to X$, the restriction $E(U)\colon W(U)\to \Mod(\cO_U)$ is a diagram of $\cO$-modules on $U$ with weights in $W(U)$, and
\item[b)] for every arrow $f\colon (U\to X) \to (V\to X)$ between opens of $X$, and for every $a\in \overline{M}^\gp(V)$ and $w\in W(V)$, the isomorphism
$$
\rho^{E}_{f^*a,f^*w}\colon E_{f^*(a+w)}=E_{f^*a+f^*w}\cong L_{f^*a}\otimes_{\cO_U} E_{f^*w}
$$
coincides with the pullback of $\rho^E_{a,w}\colon E_{a+w}\cong L_a\otimes_{\cO_V} E_w$.
\end{itemize}
\end{definition}

Sometimes we will refer to the sheaves $E_w$ as \emph{pieces} of the diagram of $\cO$-modules $E$.
Similarly to the previous case, there is an abelian category $\Mod(X,W)$ of diagrams of $\cO_X$-modules on $X$ with weights in $W$, and a full subcategory $\Qcoh(X,W)\subseteq \Mod(X,W)$ of quasi-coherent diagrams.

These definitions coincide with to the ones of \cite[Section 5.2]{borne-vistoli}, if the weight system is given by a Kummer extension of sheaves of monoids $\overline{M}\to B$.

\subsubsection{Functoriality}\label{sec:functoriality}

Let $X$ be a log scheme and $W,W'$ two weight systems, with an injective $\overline{M}^\gp$-equivariant map $j\colon W\to W'$. We will call such a map an \emph{embedding} of weight systems.

In this situation, we can define two adjoint functors between diagrams of $\cO$-modules, that we call \emph{restriction} $$\res_{W}^{W'}\colon \Mod(X,W')\to \Mod(X,W)$$ and \emph{induction} $$\ind_W^{W'}\colon \Mod(X,W)\to \Mod(X,W').$$
Restriction is simply defined by restricting a diagram of $\cO$-modules $E\colon W'\to \Mod_{\cO_X}$ and the isomorphisms $\rho^E$ along the embedding $W\to W'$. Note that this operation sends quasi-coherent diagrams to quasi-coherent diagrams.

Induction is more complicated to describe. Assume that $E\colon W\to \Mod_{\cO_X}$ is a diagram of $\cO$-modules, and let $U\to X$ be an open, and $w'\in W'(U)$. We want to define a sheaf of $\cO_U$-modules $\wti{E}_{w'}$.

For an open $f\colon V\to U$, consider the subset of $W(V)$ given by 
$$W_{w'}(V)=\{w\in W(V)\mid w\leq_{W'} f^*w'\}$$
with the induced pre-order. We have a functor from $W_{w'}(V)$ to the category of abelian groups, sending $w$ to $E_w(V)$. Define
$$
\wti{E}_{w'}^{\mathrm{pre}}(V)=\varinjlim_{w\in W_{w'}(V)} E_w(V).
$$
Moreover, given a further open $g\colon V'\to V$, we have a morphism of pre-ordered sets $g^*\colon W_{w'}(V)\to W_{w'}(V')$. For every $w\in W_{w'}(V)$ we have an isomorphism $g^*E_w\cong E_{g^*w}$, and these induce a homomorphism
$$
E_w(V)\to E_{g^*w}(V')\to \varinjlim_{w''\in W_{w'}(V')}E_{w''}(V')=\wti{E}_{w'}^{\mathrm{pre}}(V').
$$
By taking the colimit we obtain a homomorphism $\wti{E}_{w'}^{\mathrm{pre}}(V)\to \wti{E}_{w'}^{\mathrm{pre}}(V')$. This makes $\wti{E}_{w'}^{\mathrm{pre}}$ into a pre-sheaf of $\cO_U$-modules. Let $\wti{E}_{w'}$ be the associated sheaf.

We define a cartesian functor $\wti{E}\colon W'\to \Mod_{\cO_X}$ sending $w'$ to $\wti{E}_{w'}$. Note that if $w'\leq w''$ in $W'$, then there is an inclusion of pre-ordered sets $W_{w'}(V)\to W_{w''}(V)$ for every open $V\to U$, and this induces a homomorphism of $\cO_U$-modules $E_{w'}\to E_{w''}$. Since pullbacks respect direct limit and sheafifications, it also follows that the functor is cartesian. Moreover the isomorphisms $\rho^E_{a,w}$ induce isomorphisms $\rho^{\wti{E}}_{a,w'}\colon \wti{E}_{a+w'}\cong L_a\otimes_{\cO_X} \wti{E}_{w'}$ by taking colimits.

It is straightforward now to define the functor $\ind_W^{W'}$ sending $E$ to $\wti{E}$. Moreover, one also easily checks that $\ind_W^{W'}$ is left adjoint to $\res_W^{W'}$, and fully faithful (equivalently, the unit of the adjunction $\id\to \res_W^{W'}\circ \ind_W^{W'}$ is an isomorphism). We record this in the following proposition.

\begin{proposition}\label{prop:res.ind}
The restriction functor $\res_W^{W'}$ has a left adjoint $\ind_W^{W'}$, which is moreover fully faithful. \qed
\end{proposition}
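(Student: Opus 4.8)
The plan is to recognize the induction functor as a pointwise left Kan extension along the embedding $j\colon W\to W'$, and to extract both the adjunction and the full faithfulness from this single observation. First I would note that, for an open $f\colon V\to U$, the indexing set $W_{w'}(V)=\{w\in W(V)\mid w\leq_{W'} f^*w'\}$ is exactly the comma category $(j\downarrow f^*w')$, so that the presheaf formula $\wti{E}^{\mathrm{pre}}_{w'}(V)=\varinjlim_{w\in W_{w'}(V)} E_w(V)$ is the standard colimit computing $(\mathrm{Lan}_j E)_{w'}$, and $\wti{E}_{w'}$ is its sheafification. The isomorphisms $\rho^{\wti E}_{a,w'}$ and the conditions (a)--(d) of Definition \ref{def:parabolic.sheaf} are inherited in the colimit precisely because pullback commutes with filtered colimits and with sheafification (as already used in the construction), so $E\mapsto \wti{E}$ is a well-defined functor $\ind_W^{W'}\colon \Mod(X,W)\to \Mod(X,W')$. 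The adjunction $\ind_W^{W'}\dashv \res_W^{W'}$ then follows formally from the universal property of the defining colimit: a morphism $\ind_W^{W'}E\to F$ in $\Mod(X,W')$ is the same datum as a family of maps out of the colimits compatible with the transition morphisms and the $\rho$'s, which is in turn the same as a morphism $E\to \res_W^{W'}F$.

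With the adjunction in hand, full faithfulness of the left adjoint is equivalent to the unit $\eta\colon \id\to \res_W^{W'}\circ\ind_W^{W'}$ being an isomorphism, so it suffices to show that for every diagram $E$ and every $w_0\in W(U)$ the component $\eta_{w_0}\colon E_{w_0}\to \res_W^{W'}(\ind_W^{W'}E)_{w_0}=\wti{E}_{j(w_0)}$ is an isomorphism of $\cO_U$-modules. Unwinding the construction, $\wti{E}_{j(w_0)}$ is the sheaf associated with $V\mapsto \varinjlim_{w\in W_{j(w_0)}(V)} E_w(V)$, and $\eta_{w_0}$ is induced by the coprojection at the object $f^*w_0$, which does lie in $W_{j(w_0)}(V)$ because $j(f^*w_0)=f^*j(w_0)\leq_{W'} f^*j(w_0)$ by reflexivity.

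The crux is then a cofinality statement: I would show that $f^*w_0$ is a maximum (hence terminal) object of the pre-ordered set $W_{j(w_0)}(V)$, so that the inclusion $\{f^*w_0\}\hookrightarrow W_{j(w_0)}(V)$ is final and the defining colimit collapses to $E_{f^*w_0}(V)$, making $\eta_{w_0}$ an isomorphism already before sheafification, and so after it as well. Concretely this amounts to checking that any $w\in W(V)$ with $w\leq_{W'} f^*w_0$ already satisfies $w\leq_W f^*w_0$, i.e.\ that $j$ \emph{reflects} the pre-order on the down-set of $j(w_0)$. I expect this reflection to be the main obstacle: injectivity and $\overline{M}^\gp$-equivariance of $j$ alone are not enough in full generality, and one genuinely needs $j$ to be an embedding of weight systems in the strong sense of being fully faithful as a functor on the relevant comma categories. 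In the cases used to prove Theorem \ref{thm:main}, where $W=B^\wt$ with $\overline{M}\subseteq B\subseteq B'\subseteq \overline{M}_\bR$ and $j$ comes from an inclusion of $\overline{M}^\gp$-saturated sheaves of monoids, this reflection holds because $w_0-w\in (B')^\gp$ together with $w_0-w\in B^\gp$ forces $w_0-w\in B$ by saturation, which is exactly $w\leq_W w_0$. Once this order-reflection is established, the residual verifications — that $\eta_{w_0}$ intertwines the $\rho$-isomorphisms and that the presheaf-level identification survives sheafification — are routine.
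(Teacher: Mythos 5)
Your route is the same as the paper's implicit one: the colimit construction of $\wti{E}$ is a pointwise left Kan extension along $j$, the adjunction follows from the universal property of the defining colimits, and full faithfulness is equivalent to the unit being an isomorphism. The adjunction part of your argument is fine. You have also correctly isolated the one genuinely nontrivial point, which the paper passes over in silence: the unit at $w_0$ is an isomorphism exactly when $f^*w_0$ is terminal in $W_{j(w_0)}(V)$, i.e.\ when $w\leq_{W'}f^*w_0$ for $w\in W(V)$ already forces $w\leq_W f^*w_0$. This order-reflection is not implied by the paper's definition of an embedding (injective and $\overline{M}^\gp$-equivariant), so flagging it is a genuine contribution and not pedantry.

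However, your proposed repair for the case of an inclusion of $\overline{M}^\gp$-saturated sheaves of monoids $B\subseteq B'$ does not work. What you need is $B'\cap B^\gp\subseteq B$ (note it is membership of $w_0-w$ in the monoid $B'$, not in $(B')^\gp$, that $w\leq_{W'}w_0$ gives you), and $\overline{M}^\gp$-saturation does not imply this: saturation only lets you cancel elements of $\overline{M}$, not arbitrary elements of $B$. Concretely, take $\overline{M}=\bN$, let $B=\{nc+m\mid n\in\bN,\ m\in\bZ,\ nc+m\geq 0\}$ be the $\bZ$-saturated submonoid of $\bR_+$ generated by an irrational $c\in(0,1)$ as in Example \ref{example:saturated.submonoid}, and let $B'=\bR_+$. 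Then $1-c\in B'\cap B^\gp$ but $1-c\notin B$ (writing $1-c=nc+m$ forces $n=-1$), so with $w=c$ and $w_0=1$ one has $w\leq_{(B')^\wt}w_0$ but $w\not\leq_{B^\wt}w_0$; the object $f^*w_0$ is not terminal in $W_{j(w_0)}(V)$ and the colimit does not collapse to $E_{f^*w_0}(V)$. So your argument establishes full faithfulness only under the additional hypothesis that $j$ reflects the pre-order. That hypothesis does hold in every instance where the paper actually invokes induction --- a sub-weight system $R\subseteq\Lambda^\wt$ carrying the pre-order induced from $\Lambda^\wt$, as in Definition \ref{def:fine.charts.weight} and the proof of Theorem \ref{thm:main} --- but it is not supplied by saturation, and without it the proposition in the stated generality is not proved by your argument (nor, arguably, true).
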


There are obvious (simpler) versions of these constructions for diagrams of $\cO$-modules for a weight system relative to a monoid $P$ and a symmetric monoidal functor $L\colon P\to \Div(X)$.

Using the equivalence between parabolic sheaves and quasi-coherent sheaves on root stacks of \cite{borne-vistoli}, these two functors are identified with pullback and pushforward along the canonical map between the two corresponding root stacks, and this adjunction is the usual one. This is explained for example in \cite[Section 2.2]{talpo} and \cite[Section 7.1]{TV}.

\begin{remark}
While $\res_W^{W'}$ always preserves quasi-coherence, the functor $\ind_W^{W'}$ probably does not, in full generality. One can show that it is true with some additional assumptions on the weight systems, but this fact will not be needed.
\end{remark}

\subsubsection{Local models}
 
We now discuss charts for weight systems. Assume that $X$ is a log scheme or a log analytic space, with DF structure $L\colon \overline{M}\to \Div_X$ with a global chart $P\to \overline{M}(X)$, and that $R$ is a weight system for $P$. Then we obtain an induced weight system $W$ for $\overline{M}$ as follows. Call $K\subseteq \underline{P}_X$ the kernel of the map to $\overline{M}$, and consider the (sheaf) quotient $W=\underline{R}_X/K$. This is a weight system for $\overline{M}$, for the pre-order defined by $w\leq w'$ in $W(U)$ if there exists a covering $\{f_i\colon U_i\to U\}$ and $w_i, w_i' \in \underline{R}_X(U_i)$ such that $w_i\leq w_i'$ for every $i$, and $f_i^*w=w_i$, $f_i^*w'=w_i'$ for every $i$. There is a natural map $R\to W(X)$ of pre-ordered sets.
 
\begin{definition}\label{def:fine.charts.weight}
In the situation we just described, we say that the pair $(P\to \overline{M}(X), R\to W(X))$ is a \emph{chart} for the weight system $W$ on $X$.

A chart is said to be \emph{fine} if $P$ is finitely generated, and $R$ is the union of a finite number of orbits for the action of $P^\gp$.
\end{definition}
 
 Sometimes we will refer to a chart for a weight system just via the morphism $R\to W(X)$.
 
\begin{example}
Consider the weight system associated with the monoid $\frac{1}{n}\overline{M}$. If the DF structure has a global chart $P\to \overline{M}(X)$, this weight system also has a global chart, given by $\frac{1}{n}P\to \frac{1}{n}\overline{M}(X)$. This chart is also fine if $P$ is finitely generated, since the pre-ordered set $\frac{1}{n}P^\wt$ has finitely many orbits with respect to the action of $P^\gp$.

Analogously the weight system associated with $\overline{M}_\bQ$ has a chart given by $P_\bQ\to \overline{M}_\bQ(X)$, but this chart is not fine.
\end{example}

\begin{definition}
A weight system $W$ for a log scheme $X$ is said to be \emph{quasi-coherent} if it locally admits charts. It is said to be \emph{fine} if it locally admits fine charts.
\end{definition}

\subsection{Parabolic sheaves}\label{sec:par.sheaves}

Assume now that $\Lambda$ is a sheaf of monoids on the fine saturated logarithmic scheme $X$, such that $\overline{M}\subseteq \Lambda\subseteq \overline{M}_\bR$, and that it is \emph{quasi-coherent}, i.e. it admits local charts. This means that locally on $X$ there is a chart $P\to \overline{M}(X)$ and a monoid $\Lambda_0$ with $P\subseteq \Lambda_0\subseteq P_\bR$, with a chart $\Lambda_0\to \Lambda(X)$ that makes the obvious diagram commute. One can check that this is equivalent to asking that $\Lambda$ be \emph{log constructible} \cite[3.2]{ogus} (briefly, this means that it is locally constant on the stratification associated to the log structure of $X$). 

We will also always assume that $\Lambda\subseteq \overline{M}_\bR$ is \emph{saturated} for the action of $\overline{M}^\gp$ (or $\overline{M}^\gp$-\emph{saturated}, for short). This will mean the following: if $\lambda\in \overline{M}_\bR$ is such that $\lambda+p\in \Lambda$ for some $p\in \overline{M}$, then $\lambda\in \Lambda$. This condition can also be formulated by considering the projection $\pi\colon \overline{M}_\bR^\gp\to \overline{M}_\bR^\gp/\overline{M}^\gp\cong \overline{M}^\gp\otimes_{\bZ}(\bR/\bZ)$, and requiring that $\Lambda=\pi^{-1}\pi(\Lambda)\cap \overline{M}_\bR$. This condition can be checked on charts for $\overline{M}$ and $\Lambda$.

\begin{example}\label{example:saturated.submonoid}
If the log structure has a chart with $P=\bN$, then we have $P_\bR=\bR_+$, and for every number $c\in \bR_+$ we can consider as $\Lambda_0$ the $\bZ$-saturated submonoid of $\bR_+$ generated by $c$. If $c$ is irrational this is not just the submonoid $\{nc \mid n\in \bN\}$ consisting of positive multiples of $c$ (this does not even contain $\bN$), but is the subset $\{nc+m\mid n\in \bN, \, m\in \bZ\ \text{ and } nc+m\geq 0\}$. Notice that as a monoid this is not finitely generated. In fact, in general if a monoid $\Lambda$ as above is finitely generated, then it is necessarily contained in some $\frac{1}{n}\overline{M}$.
\end{example}

Recall that $\Lambda$ determines a weight system $\Lambda^\wt$ for $\overline{M}^\gp$ (see Example \ref{example:weight system}).

\begin{definition}
A \emph{parabolic sheaf} on $X$ with weights in $\Lambda$ is a diagram of $\cO$-modules for the weight system $\Lambda^\wt$.
\end{definition}

The same definition applies in the presence of a Kato chart $P\to \overline{M}(X)$ and a $P^\gp$-saturated monoid $P\subseteq \Lambda_0\subseteq P_\bR$, and gives a notion of parabolic sheaf on $X$ with weights in $\Lambda_0$.

\begin{proposition}\label{prop:parabolic.chart}
Let $X$ be a fine saturated log scheme, and $\Lambda$ be a $\overline{M}^\gp$-saturated sheaf of monoids with $\overline{M}\subseteq \Lambda\subseteq \overline{M}_\bR$, with a global chart $(P\to \overline{M}(X),\Lambda_0\to \Lambda(X))$. Then there is an equivalence of categories $\iota\colon \Mod(X,\Lambda^\wt)\to \Mod(X,\Lambda_0^\wt)$.
\end{proposition}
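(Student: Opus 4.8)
The plan is to realize $\iota$ as restriction along the chart map $\Lambda_0^\wt\to\Lambda^\wt(X)$ and to build an explicit quasi-inverse by descent along the chart $P\to\overline{M}(X)$. The whole argument is the analogue, in the present (possibly non-Kummer, real-weight) setting, of the chart description of parabolic sheaves in \cite{borne-vistoli}, so once the right presentation of $\Lambda^\wt$ is in place the verifications are formal.

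First I would record the presentation of the sheaf-level weight system coming from the chart. By the construction in the ``Local models'' paragraph, the global chart yields $\Lambda^\wt=\underline{\Lambda_0^\wt}_X/K$, where $K\subseteq\underline{P}_X$ is the kernel of $\underline{P}_X\to\overline{M}$. The first task is to verify that this sheaf quotient really is $\Lambda^\wt$ as a weight system for $\overline{M}$, pre-order included. This is where the hypotheses enter: the fact that $\Lambda_0\to\Lambda(X)$ is a chart gives the identification on underlying sheaves of monoids, while the $\overline{M}^\gp$-saturatedness of $\Lambda$ (equivalently $\Lambda=\pi^{-1}\pi(\Lambda)\cap\overline{M}_\bR$) is what guarantees that the induced pre-order on the quotient coincides with the one coming from $\Lambda$, rather than being strictly finer. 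Granting this, $\iota$ sends a cartesian functor $E\colon\Lambda^\wt\to\Mod_{\cO_X}$ to the $P$-diagram $r\mapsto E_{\bar r}$, where $\bar r\in\Lambda^\wt(X)$ is the image of $r\in\Lambda_0^\wt$; the isomorphisms $\rho^E$ for $a\in P^\gp$ are transported from those for the image of $a$ in $\overline{M}^\gp$, and axioms (a)--(d) of Definition \ref{def:parabolic.sheaf} for $\iota(E)$ follow from those for $E$ using that $L\colon P\to\Div(X)$ factors through $\overline{M}$.

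The heart of the proof is the quasi-inverse $\jmath$. Given a $P$-diagram $F\colon\Lambda_0^\wt\to\Mod(\cO_X)$, I would produce a cartesian functor $\wti{F}\colon\Lambda^\wt\to\Mod_{\cO_X}$ as follows. A section $w\in\Lambda^\wt(U)$ lifts, locally on $U$, to a (locally constant) section of $\underline{\Lambda_0^\wt}_X$, i.e. to some $r\in\Lambda_0^\wt$, and one sets $\wti{F}_w:=F_r$ on that locus. Two lifts $r,r'$ of the same $w$ differ by an element $k\in\ker(P^\gp\to\overline{M}^\gp)$, and for such $k$ the line bundle $L_k$ is canonically trivialized by the structure isomorphisms of $L$ (it is $L$ of $0\in\overline{M}^\gp$), so $\rho^F_{k,r'}\colon F_{k+r'}\cong L_k\otimes_{\cO_X} F_{r'}\cong F_{r'}$ gives a canonical isomorphism $F_r\cong F_{r'}$. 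These are the descent data gluing the local pieces $F_r$ into a sheaf $\wti{F}_w$; their cocycle condition is exactly the associativity axiom (c), and their compatibility with the order maps is axiom (b). The result is cartesian because the $F_r$ are globally defined sheaves and pullback commutes with the relevant colimits and sheafification, and the isomorphisms $\rho^{\wti{F}}_{a,w}$ for $a\in\overline{M}^\gp$ are induced by $\rho^F$ after lifting $a$ to $P^\gp$.

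Finally I would check that $\iota$ and $\jmath$ are mutually quasi-inverse. The composite $\iota\circ\jmath$ is isomorphic to the identity because $\bar r$ lifts globally to $r$, whence $\jmath(F)_{\bar r}\cong F_r$ compatibly with all the structure. For $\jmath\circ\iota\cong\id$ one uses that every diagram $E$ for $\Lambda^\wt$ is recovered from its chart-level values $E_{\bar r}$ by the very same descent, since $E$ is cartesian and every section of $\Lambda^\wt$ is locally the image of some $r$; the gluing data built into $E$ through its own $\rho$-isomorphisms over $\ker(P^\gp\to\overline{M}^\gp)$ coincide with those used to define $\jmath$. I expect the main obstacle to be precisely the bookkeeping in this descent step: checking that the gluing isomorphisms coming from the $\rho$'s over the kernel satisfy the cocycle condition and are compatible with the pre-order and with pullback, so that $\wti{F}$ is a genuinely well-defined cartesian functor. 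Once that is settled, naturality of both composites and of the comparison isomorphisms is routine, and the exactness statement is immediate since the abelian structure on both categories is defined componentwise.
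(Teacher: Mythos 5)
Your definition of $\iota$ (restriction along $\Lambda_0^\wt\to\Lambda^\wt(X)$) and your overall plan --- an explicit quasi-inverse built by descent, following the chart description of parabolic sheaves in \cite{borne-vistoli} --- are exactly the route the paper takes, since its proof just defers to [BV, Proposition 5.10]. But there is a genuine gap in your descent step. You assert that two local lifts $r,r'\in\Lambda_0^\wt$ of a section $w$ of $\Lambda^\wt$ differ by an element of $\ker(P^\gp\to\overline{M}^\gp)$, and you glue via the canonical trivialization of $L_k$ for such $k$. This is false: two lifts differ by a section of the kernel of $\underline{\Lambda_0^\gp}_X\to\Lambda^\gp$, which is in general strictly larger. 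Concretely, over the open locus $U$ where the log structure is trivial one has $\overline{M}|_U=\overline{M}_\bR|_U=\Lambda|_U=0$, so $\Lambda^\wt(U)$ is a single object, and two lifts may differ by an arbitrary element of $\Lambda_0^\gp$ --- e.g.\ an irrational $c\in\Lambda_0\subseteq\bR_+$ when $P=\bN$, which does not lie in $P^\gp=\bZ$. For such a difference there is no line bundle $L_c$ and no isomorphism $\rho^F_{c,r}$ in the data of a diagram for $\Lambda_0^\wt$, so your gluing mechanism produces nothing; relatedly, your claimed presentation $\Lambda^\wt=\underline{\Lambda_0^\wt}_X/K$ with $K=\ker(\underline{P}_X\to\overline{M})$ already fails at the level of underlying sheaves of sets, not just of pre-orders, so saturation cannot repair it there.

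The missing ingredient is precisely the modification to [BV, Lemma 5.11] that the paper's proof flags. For $k\in\Lambda_0$ mapping to zero in $\Lambda$ over $U$, the identification $F_r\cong F_{r+k}$ has to come from the order map $F_{(r,r+k)}$, and one must prove it is invertible over $U$. This is done by finding $l\in P$ with $k\leq l\leq mk$ in $\Lambda_0^\wt$ for some $m\in\bN$: the inequality $l\leq mk$ forces $l$ to lie in $\ker(\underline{P}_X\to\overline{M})(U)$ (so $s_l$ is invertible there and $F_{(r,r+l)}$ is an isomorphism by axiom (a)), and the factorizations $F_{(r,r+l)}=F_{(r+k,r+l)}\circ F_{(r,r+k)}$ and $F_{(r+k,r+k+l)}=F_{(r+l,r+k+l)}\circ F_{(r+k,r+l)}$ then force $F_{(r,r+k)}$ to be invertible. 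The existence of such an $l$ with $l-k$ and $mk-l$ in $\Lambda_0$ is where the $\overline{M}^\gp$-saturation hypothesis actually enters --- not, as you suggest, in matching the pre-orders. Once these invertible order maps are in hand they supply the full descent data, and the remainder of your outline (cocycle from axiom (c), compatibility from axiom (b), the two composites being the identity) goes through as you describe.
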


\begin{proof}
The functor $\iota$ is defined by restricting $E\colon \Lambda^\wt\to \Mod_{\cO_X}$ and the isomorphisms $\rho_{a,w}^E$ along $\Lambda^\wt_0\to \Lambda^\wt(X)$.

The fact that $\iota$ is an equivalence is proven exactly as in \cite[Proposition 5.10]{borne-vistoli} (the only change is that in \cite[Lemma 5.11]{borne-vistoli} one has to consider a section $l$ such that $k\leq l\leq mk$ for some $m\in \bN$).
\end{proof}

\begin{remark}
In the algebraic case, if one wants to work in the \'etale topology then for the statement of the previous proposition to be true one needs to replace $\Mod$ by $\Qcoh$ (the problem is that $\Mod$ is not a stack for the \'etale topology). Since the main focus for this paper is on the analytic case, we will not worry about this.
\end{remark}

As for the case of schemes and finite systems of weights treated in \cite{borne-vistoli}, we want to restrict to a class of ``quasi-coherent'' parabolic sheaves. One natural choice would be to consider quasi-coherent diagrams of $\cO$-modules as in the definition above, but we will do something a little different. Let us define finitely presented sheaves first.

\begin{definition}\label{def:qcoh.par}
A parabolic sheaf $E$ with weights in $\Lambda$ is \emph{finitely presented} if for all $\lambda\in \Lambda^\wt$ the sheaf $E_\lambda$ is a finitely presented sheaf of $\cO_X$-modules on $X$, and locally on $X$ there exists a fine sub-weight system $R\subseteq \Lambda^\wt$ such that $E$ is in the essential image of the induction functor
$$
\ind_{R}^{\Lambda^\wt}\colon \Mod(X,R)\to \Mod(X,\Lambda^\wt).
$$
\end{definition}

Intuitively, the last condition says that locally the parabolic sheaf is completely determined by finitely many of its pieces $E_\lambda$. It is not hard to check that the induction functors of the kind that appear in the definition above preserve quasi-coherence of the diagrams (as defined in (\ref{sec:diagrams})): using Proposition \ref{prop:parabolic.chart} one can reduce to the case of constant monoids, and then the statement reduces to the fact that a finite colimit of quasi-coherent sheaves is quasi-coherent. Note that this assertion may fail for non-finite colimits in the analytic context (see \cite[Remark 2.1.5]{conrad}). 

\begin{example}
In Example \ref{example:saturated.submonoid}, for every number $c\in \bR_+$ we can consider the weight system $R$ given by the subset $\{c+k\mid k\in \bZ\}$ inside the weight system $\Lambda_0^\wt=\{mc+n\mid m\in \bZ, \, n\in \bZ\}$. This $R$ is a fine weights system, since it consists of a single orbit for the action of $\bZ$.
\end{example}

\begin{example}
Continue to assume that the log structure has a global chart with monoid $\bN$ (so the DF structure is given by a line bundle with a section $(L,s)\in \Div(X)$), and take $\Lambda=\bR$. In this case a parabolic sheaf with weights in $\Lambda$ is the assignment of an $\cO_X$-module $E_r$ for each $r\in \bR$, with maps $E_r\to E_{r'}$ when $r\leq r'$, that are compatible with respect to composition, and such that $E_r\to E_{r+1}\cong L\otimes_{\cO_X} E_r$ is identified with multiplication by the section $s$.

For such a sheaf, being finitely presented means that each $E_r$ is a finitely presented sheaf on $X$, and moreover there exist finitely many real numbers $0\leq r_1 < \cdots < r_k <1$, such that for every $r\in \bR$ the sheaf $E_r$ is obtained in this way: consider the largest integer $\lfloor r\rfloor$ which is $\leq r$, and the fractional part $s=\{r\}=r-\lfloor r\rfloor$; then $E_r\cong E_{r_i}\otimes L_{\lfloor r\rfloor}$ via the given map, where $r_i$ is the largest of the fixed numbers that is $\leq s$. 

In other words, the parabolic sheaf $E$ is completely determined by the weights $r_i$, the finitely presented sheaves $E_{r_i}$, and the maps between them.
\end{example}

\begin{remark}
Note that if $X$ is a noetherian (or more generally coherent) scheme or an analytic space, finitely presented sheaves coincide with coherent sheaves. In the next sections we will work with complex analytic spaces or schemes of finite type over $\bC$, so this comment will apply.
\end{remark}

In the category of parabolic sheaves with weights in $\Lambda$, we can form colimits by taking the colimits ``level-wise''.

\begin{definition}
A parabolic sheaf with weights in $\Lambda$ is \emph{quasi-coherent} if locally on $X$ it can be written as a filtered colimit of finitely presented parabolic sheaves with weights in $\Lambda$.
\end{definition}

\begin{remark}\label{rmk:qcoh}
This definition is inspired by the definition of a quasi-coherent sheaf on an analytic space of \cite{conrad}. We opted to use this notion, instead of the perhaps more natural one requiring that all sheaves $E_\lambda$ are quasi-coherent on $X$, for technical convenience. Note that, if $X$ itself is coherent, for a quasi-coherent sheaf in the sense of the definition it is indeed the case that  $E_\lambda$ is quasi-coherent for every $\lambda$ (it is locally a filtered colimit of finitely presented sheaves, which are also coherent if $\cO_X$ itself is), but it is not clear if the two notions would coincide in general.

See also the discussion about quasi-coherent sheaves on $X_\log$ in (\ref{sec:sheaves.of.modules}).
\end{remark}

We will denote by $\Par(X,\Lambda)$ the category of quasi-coherent parabolic sheaves on $X$ with weights in $\Lambda$, and by $\FP\Par(X,\Lambda)$ the full subcategory of finitely presented parabolic sheaves. Moreover, $\Par(X,\bQ)$ will be a shorthand for the category $\Par(X,\overline{M}_\bQ)$, and $\Par(X,\bR)$ for the category $\Par(X,\overline{M}_\bR))$. Note that it is not clear that these are abelian categories, but we can talk about exactness by embedding these categories into the abelian category $\Mod(X,\Lambda^\wt)$ of diagrams of $\cO$-modules for $\Lambda^\wt$.

To conclude this section we briefly note that, over the complex numbers, there is a version for finitely presented parabolic sheaves of the GAGA equivalence, that relates parabolic sheaves on a proper scheme $X$ over $\bC$ and parabolic sheaves on the associated analytic space. Assume that $X$ is a fine saturated log scheme locally of finite type over the complex numbers. Then the analytification $X_\an$ inherits a fine saturated log structure (on its classical site), and we can compare  parabolic sheaves on the two sides. Let $\Lambda$ be a $\overline{M}^\gp$-saturated quasi-coherent sheaf of monoids on $X$ such that $\overline{M}\subseteq \Lambda\subseteq \overline{M}_\bR$, and denote by $\Lambda_\an$ the induced sheaf on the classical site of $X_\an$.

\begin{proposition}\label{prop:gaga}
There is a natural analytification functor
$$
(-)_\an\colon \FP\Par(X,\Lambda)\to \FP\Par(X_\an,\Lambda_\an)
$$
which is exact. If $X$ is proper, this functor is an equivalence of categories.
\end{proposition}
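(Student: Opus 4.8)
The plan is to construct $(-)_\an$ one \emph{piece} at a time from the classical analytification of coherent sheaves, and then to deduce the equivalence entirely from Serre--Grothendieck GAGA for coherent sheaves on the proper scheme $X$, applied to the finitely many coherent sheaves that, by finite presentation, control a parabolic sheaf.

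First I would recall that analytification $(-)_\an=\iota^\ast$ along $\iota\colon X_\an\to X$ is an exact (by flatness of $\cO_{X_\an}$ over $\iota^{-1}\cO_X$) symmetric monoidal functor $\Coh(X)\to \Coh(X_\an)$, compatible with restriction to opens, with pullbacks, and with tensoring against the line bundles $L_a$ of the DF structure (and $(L_a)_\an$ is again the line bundle of the analytified structure). Applying it to every piece $E_w$, to the maps $E_{(w,w')}$ and to the isomorphisms $\rho^E_{a,w}$ of a finitely presented parabolic sheaf $E$ produces the corresponding data on $X_\an$; because $(-)_\an$ is monoidal and functorial, the commuting diagrams (a)--(d) of Definition~\ref{def:parabolic.sheaf} and the cartesian/functoriality conditions transport verbatim, so $E_\an$ is a diagram of $\cO_{X_\an}$-modules for $\Lambda_\an^\wt$ with coherent pieces. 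That it is again finitely presented in the sense of Definition~\ref{def:qcoh.par} follows because $\iota^\ast$ commutes with colimits, sheafification and restriction along an embedding of weight systems, hence with the induction functor of (\ref{sec:functoriality}): if locally $E\cong \ind_R(\res_R E)$ for a fine $R$, then locally $E_\an\cong \ind_{R_\an}(\res_{R_\an} E_\an)$. Exactness of $(-)_\an$ is tested piece by piece in $\Mod(X,\Lambda^\wt)$, and holds because analytification is exact on coherent sheaves.

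For full faithfulness when $X$ is proper, I would represent morphisms by global sections of a coherent sheaf-$\Hom$. For finitely presented $E,F$ set $\underline{\Hom}(E,F)(U)=\Hom_{\Par(U,\Lambda|_U)}(E|_U,F|_U)$; this is a sheaf, and locally, where $E\cong \ind_R(\res_R E)$ with $R$ fine, the adjunction of Proposition~\ref{prop:res.ind} identifies it with $\Hom_R(\res_R E,\res_R F)$, which is the kernel of a map between finite products of the coherent sheaves $\underline{\Hom}_{\cO_X}(E_{w_i},F_{w_i})$ over representatives $w_i$ of the finitely many $\overline{M}^\gp$-orbits of $R$. Hence $\underline{\Hom}(E,F)$ is a coherent $\cO_X$-module with $H^0(X,\underline{\Hom}(E,F))=\Hom_{\Par(X,\Lambda)}(E,F)$. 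Since sheaf-$\Hom$ of coherent sheaves and kernels both commute with analytification, $\underline{\Hom}(E,F)_\an\cong \underline{\Hom}(E_\an,F_\an)$, and GAGA for the coherent sheaf $\underline{\Hom}(E,F)$ gives an isomorphism $H^0(X,\underline{\Hom}(E,F))\cong H^0(X_\an,\underline{\Hom}(E_\an,F_\an))$, i.e. full faithfulness.

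For essential surjectivity I would, when $X$ carries a global chart, simply apply the quasi-inverse $\beta$ of the GAGA equivalence $\Coh(X)\simeq \Coh(X_\an)$ (an exact monoidal functor) level-wise to a finitely presented analytic parabolic sheaf $G$: the finitely many coherent pieces controlling $G$ algebraize, and monoidality of $\beta$ together with $\beta((L_a)_\an)\cong L_a$ transports the isomorphisms $\rho^G$ and the axioms, yielding an algebraic $E$ with $E_\an\cong G$; that $E$ is finitely presented is then detected after analytification, because $\iota^\ast$ is faithfully flat and so reflects the isomorphy of the counit $\ind_R\res_R E\to E$. For a general proper $X$ I would cover it by finitely many chart-opens, algebraize on each as above, and glue the local algebraizations using the already-established full faithfulness to produce unique transition isomorphisms satisfying the cocycle condition. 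The main obstacle is exactly this last local-to-global step: classical GAGA algebraizes \emph{global} coherent sheaves on the proper $X$, whereas the controlling fine sub-weight system $R$ and its coherent pieces are only available locally (and, because $\Lambda$ may contain irrational weights, one cannot repackage the parabolic sheaf as a coherent sheaf on a proper root stack and invoke GAGA for Deligne--Mumford stacks); the work is to check that the fine-chart structure and the algebraizations descend and glue along the cover, for which the faithful flatness of analytification and the uniqueness built into full faithfulness are the essential tools.
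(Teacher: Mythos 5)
Your overall strategy (piece-wise analytification plus classical coherent GAGA) is exactly the paper's; the paper's own proof is two lines, observing that the pieces of a finitely presented parabolic sheaf are coherent since $X$ is noetherian and that ``all the assertions follow immediately from the classical GAGA theorems.'' Your construction of the functor, the exactness argument, and the full-faithfulness argument via a coherent $\underline{\Hom}$-sheaf (or, more directly, via full faithfulness of GAGA applied to each $\Hom(E_\lambda,F_\lambda)$ together with faithfulness to propagate the compatibilities) are all fine.

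However, your essential-surjectivity argument contains a genuine misstep. You propose to cover a general proper $X$ by chart-opens, ``algebraize on each as above, and glue.'' But the quasi-inverse $\beta$ of $\Coh(X)\simeq\Coh(X_\an)$ only exists when the base is proper, and a chart-open of a proper $X$ is in general not proper; there is no local algebraization functor to apply on those opens, so the local-to-global gluing you describe never gets off the ground. The difficulty you flag is not where the real work is: the correct route is to algebraize \emph{globally}. Each piece $G_\lambda$ of a finitely presented analytic parabolic sheaf is a coherent sheaf on the proper $X_\an$ in its own right (finite presentation in Definition \ref{def:qcoh.par} is a global condition on each piece), so each $G_\lambda$ algebraizes by GAGA on $X$ itself, and all structure maps $G_{(\lambda,\lambda')}$ and isomorphisms $\rho^G_{a,\lambda}$ algebraize by global full faithfulness, with the axioms transported by faithfulness. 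No gluing of local algebraizations is needed. The charts and the fine sub-weight systems $R$ enter only in verifying that the resulting algebraic diagram is again finitely presented, i.e.\ locally induced from a fine $R$ -- and there your faithful-flatness remark (that $\iota^\ast$ reflects the isomorphy of the counit $\ind_R\res_R E\to E$) is the right tool, though one should still say why an $R$ that works on an analytic neighbourhood can be taken to work on a Zariski one; this last point is also glossed over by the paper.
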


\begin{proof}
Note that since $X$ is noetherian, the pieces of a finitely presented parabolic sheaf are coherent sheaves. The analytification functor is then defined by analytifying all the pieces and the maps of a parabolic sheaf, and all the assertions follow immediately from the classical GAGA theorems.
\end{proof}

\begin{remark}\label{rmk:analytic.vs.algebraic}
The previous proposition assures that if $X$ is a proper scheme over $\bC$, Theorem \ref{thm:main} applies also to ``algebraic'' parabolic sheaves on $X$ (since in that case they are the same as ``analytic'' ones on $X_\an$).

In general, our main result gives a correspondence between {analytic} parabolic sheaves on $X_\an$, and certain sheaves of modules on $X_\log$. If $X$ is a scheme of finite type over $\bC$ which is not proper, one can still ask if {algebraic} parabolic sheaves with real weights on $X$ correspond to some kind of sheaves on $X_\log$. There should indeed be a variant of the constructions that we will describe starting in the next section, giving a correspondence involving algebraic parabolic sheaves, but for simplicity of exposition we will restrict our treatment to analytic sheaves on $X_\an$ (which is anyway the most natural setting, given the use of the Kato-Nakayama space).
\end{remark}

\subsection{Correspondence with sheaves on root stacks}\label{sec:reminder}

Since our proof of Theorem \ref{thm:main} will follow quite closely the one of \cite[Theorem 6.1]{borne-vistoli}, we give a short reminder about the correspondence with sheaves on root stacks.

Let $X$ be a fine saturated log scheme, and consider a system of denominators $\overline{M}\to B$ (i.e. a Kummer extension of sheaves of monoids, admitting local charts). We are going to sketch the construction of the functor $\Phi\colon \Qcoh(\radice[B]{X})\to \Par(X,B)$, and the proof that it is an equivalence.

Recall that $\pi\colon \radice[B]{X}\to X$ carries a universal DF structure $L^B\colon \pi^{-1}B\to \Div_{\radice[B]{X}}$ that extends the pullback $\pi^{*}L\colon \pi^{-1}A\to \Div_{\radice[B]{X}}$. Given a quasi-coherent sheaf $F \in \Qcoh(\radice[B]{X})$ and $b\in B^\wt(U)$ for $U\to X$ \'etale, set
$$
\Phi(F)_b:=\pi_*(F|_{\pi^{-1}U}\otimes_{\cO_{\radice[B]{U}}} L^B_b).
$$
Note that for $b\leq b'$, i.e. $b'=b+c$ with $c\in B(U)$, we have $L^B_{b'}\cong L^B_{b}\otimes_{\cO_X} L^B_{c}$, and hence we obtain a map $\Phi(F)_b\to \Phi(F)_{b'}$, given by multiplication by the section $s_{c}$ of $L^B_{c}$. The projection formula for $\pi$ provides the isomorphisms $\rho_{m,b}^{\Phi(F)}\colon \Phi(F)_{b+m}\cong L_m\otimes_{\cO_X} \Phi(F)_b$ for $b\in B^\wt(U)$ and $m\in \overline{M}(U)$. Easy verifications show that $\Phi(F)$ is a parabolic sheaf on $X$ with weights in $B$.

To prove that this functor is an equivalence, since both categories extend to stacks for the \'etale topology of $X$, one can localize where there is a Kato chart $X\to \Spec \bZ[P]$ and a chart $(Q\to B(X), P\to Q)$ for $\overline{M}\to B$, and construct a quasi-inverse locally. Recall from (\ref{sec:root.stacks}) that in the presence of such charts, quasi-coherent sheaves on $\radice[B]{X}$ can be identified with quasi-coherent sheaves of $\cO_X[P^\gp]\otimes_{\bZ[P]}\bZ[Q]$-modules on $X$ that have a $Q^\gp$-grading compatible with the module structure.

Given a parabolic sheaf $E$ with weights in $Q$, one forms the sheaf $\bigoplus_{q\in Q^\gp} E_q$ on $X$. This has a structure of $\cO_X[P^\gp]\otimes_{\bZ[P]}\bZ[Q]$-module, determined by the maps $E_q\to E_{q'}$ for $q\leq q'$ that are part of the definition of a parabolic sheaf, and a $Q^\gp$-grading that is compatible with the module structure. Hence we obtain a quasi-coherent sheaf on $\radice[B]{X}$, and this gives the desired quasi-inverse. One can also show that if $X$ is noetherian, coherent sheaves on $\radice[B]{X}$ correspond to parabolic sheaves such that each $E_b$ is coherent.

\section{Sheaves on the Kato-Nakayama space}\label{sec:sheaves.on.log}

From now on $X$ will be a fine saturated complex analytic space (which might for example be the analytification of a fine saturated log scheme locally of finite type over $\bC$). We will denote by $\tau\colon X_\log\to X$ the Kato-Nakayama space of $X$ with its natural projection.

\subsection{Indexed algebras}\label{sec:3.1}

Assume for the moment that $(T,\cO_T)$ is an arbitrary ringed space.

We start by describing a construction of a sheaf of $\cO_T$-algebras $\cA$ associated with an extension
\begin{equation}\label{eq:extension}
\xymatrix{
0\ar[r] & \cO_T^\times\ar[r] & M\ar[r]^\pi & \overline{M}\ar[r] & 0
}
\end{equation}
of sheaves of monoids on $T$, which could be associated with a log structure in the case where $T$ is the underlying space of a complex analytic space (but we will also apply this procedure to exact sequences on the Kato-Nakayama space). By an extension of sheaves of monoids, we mean a pair of maps $f\colon P'\to P$ and $g\colon P\to P''$, such that $f$ is an isomorphism onto the submonoid $g^{-1}(0)$ of $P$, and $g$ induces an isomorphism $P/{P'}\cong P/f^{-1}(0) \cong P''$. The following construction is taken from a paper of Lorenzon \cite{lorenzon}, via the work of Ogus \cite{ogus}.

For an open $U\subseteq T$ and a section $a\in \overline{M}(U)$, the sheaf of preimages of $a$ in $M$ is an $\cO_U^\times$-torsor that we denote by $P_a$. This corresponds to a line bundle $N_a$ (i.e. an invertible sheaf of $\cO_U$-modules) on $U$, given by the contracted product $P_a\times^{\cO^\times_U}\cO_U$, where $\cO_U^\times\to \cO_U$ is the natural inclusion.
We define $\cA(U)=\oplus_{a\in \overline{M}(U)}N_a(U)$ as an $\cO_T(U)$-module. The natural restriction maps give a sheaf of $\cO_T$-modules $\cA$. To ease notation, we will succinctly write $\cA=\bigoplus_{a\in \overline{M}}N_a$. Moreover if $a,b \in \overline{M}(U)$, we have a natural map $N_a\otimes_{\cO_T} N_b\to N_{a+b}$, and this gives $\cA$ the structure of a sheaf of $\cO_T$-algebras. There is also a natural morphism of monoids $M\to \cA$ (where the operation on $\cA$ is multiplication), since a section $m$ of $M$ trivializes the torsor $P_{\pi(m)}$.

Assume now that we have a homomorphism of sheaves of monoids $M\to \cO_T$ (where $\cO_T$ is equipped with multiplication), such that the composite $\cO_T^\times\to M\to \cO_T$ coincides with the canonical inclusion. Then every $N_a$ has a canonical co-section  $t_a\colon N_a\to \cO_T$, induced by the map $P_a\subseteq M\to \cO_T$.

\begin{example}\label{ex:not.qc}
Assume we are considering the extension (\ref{eq:extension}) associated with the natural log structure given by the origin on $X=\bA^1=(\Spec \bC[z])_\an$. In this case the sheaf $\cA$ is as follows. If $U\subseteq \bA^1$ does not contain the origin, then $\cA|_U\cong \cO_U$, since in this case the restriction of the extension is trivial (i.e. $\overline{M}=0$). If $U$ does contain the origin, then $\Gamma(U,\overline{M})=\bN$, and $\cA(U)\cong \bigoplus_{n\in \bN} t^n\cdot \cO_X(U)$, where $t^n$ is just a placeholder variable. 

The map $N_a\otimes_{\cO_T} N_b\to N_{a+b}$ is given by the natural isomorphism $$(t^n \cdot \cO_{\bA^1})\otimes_{\cO_{\bA^1}} (t^m\cdot \cO_{\bA^1})\cong t^{n+m}\cdot \cO_{\bA^1},$$
and the co-section $N_a\to \cO_X$ is determined by $t^n\cdot \cO_{\bA^1}\to \cO_{\bA^1}$ sending $t^n\cdot 1$ to $z^n$. A similar description can be given for affine toric varieties $X=\bC(P)=(\Spec \bC[P])_\an$ with the natural log structure.

In particular note that the sheaf $\cA$ is not quasi-coherent, even in the algebraic case (and this is in fact typical). If it were quasi-coherent, it would be the sheaf associated to the $\bC[z]$-module $\Gamma(\bA^1,\cA)=\bigoplus_{n\in \bN} t^n\cdot \bC[z]$, but this is clearly incorrect, since the restriction  of $\cA$ to $\bA^1\setminus \{0\}$ coincides with the structure sheaf $\cO_{\bA^1\setminus \{0\}}$.
\end{example}

Denote by $\Div_{(T,\cO_T)}$ the symmetric monoidal category over $T$ of pairs $(L,s)$ consisting of an $\cO_T$-line bundle $L$ (i.e. a locally free sheaf of $\cO_T$-modules of rank $1$) with a section $s$. From the extension (\ref{eq:extension}) and the previous construction we also obtain a symmetric monoidal functor $\overline{M}\to \Div_{(T,\cO_T)}$ by sending $a\in \overline{M}(U)$ to the dual $L_a=N_a^\vee$ of the line bundle $N_a$ associated to the torsor $P_a$, together with the section $s_a\colon \cO_T\to L_a$ induced by the co-section $t_a$.

\begin{definition}
If $X$ is a log analytic space, and extension (\ref{eq:extension}) comes from the log structure, we will denote the associated sheaf of $\cO_X$-algebras by $\cA_X$.
\end{definition}

\subsection{Extensions on $X_\log$}\label{sec:extensions}

Let $X$ be a fine saturated log analytic space. We will explain how to produce various extensions of the form (\ref{eq:extension}) on the Kato-Nakayama space $X_\log$, besides the one coming from the log structure of $X$. We will use these extensions to produce sheaves of rings $\cO_\Lambda$ on $X_\log$ for a quasi-coherent sheaf of submonoids $\Lambda\subseteq \overline{M}_\bR=\overline{M}\otimes \bR_+$ containing $\overline{M}$. Morally, the sheaf $\cO_\Lambda$ will be generated by the pullback of $\cO_X$ and the products of powers ${m_i}^{\alpha_i}$ where $m_i$ are sections of $\overline{M}$ and $\alpha_i\in \bR_+$, such that $\sum_i m_i \otimes \alpha_i \in \Lambda \subseteq \overline{M}_\bR$ (see the description in (\ref{sec:local.description})).

Recall that the universal object parametrized by the topological space $X_\log$ with the projection $\tau\colon X_\log\to X$ is a homomorphism $c\colon \tau^{-1}M^\gp\to \bS^1_{X_\log}$ of sheaves of abelian groups of $X_\log$, such that $c(f)=f/|f|$ for $f\in \tau^{-1}\cO^\times$. Recall that if $T$ is a topological space and $G$ is a topological monoid, or group, etc. we denote by $G_T$ the sheaf of continuous functions towards $G$ on opens of $T$ (with the induced structure of a sheaf of monoids, groups, etc.).

Consider the sheaf of abelian groups $\cL=\tau^{-1}M^\gp\times_{\bS^1_{X_\log}}  i\bR_{X_\log}$ on $X_\log$, where $i\bR_{X_\log}\to \bS^1_{X_\log}$ is given by the exponential. This sits in a commutative diagram with exact rows 
$$
\xymatrix{
0\ar[r] & 2\pi i \bZ \ar[r]\ar[d]_= &\ar[d]\ar[r] \cL &\ar[r] \tau^{-1}M^\gp\ar[d]^c &0 \\
0 \ar[r] & 2\pi i \bZ \ar[r] & i\bR_{X_\log} \ar[r] &\ar[r]\bS^1_{X_\log} &0.
}
$$
In other words, sections of $\cL$ over an open $U$ are pairs consisting of a section $m$ of $\tau^{-1}M^\gp$ and a continuous function $\theta \colon U\to i\bR$, such that $c(m)=e^{\theta}$ as functions $U\to \bS^1$. If we think of $c$ as assigning a \emph{phase} to every section of $\tau^{-1}M^\gp$ that is not in $\tau^{-1}\cO^\times_X$, then $\cL$ records also the choice of an \emph{angle}, i.e. a pre-image in $i\bR$ of the phase. In this sense, $\cL$ is a sheaf of ``logarithms'' of sections of $M^\gp$. The structure sheaf $\cO_X^\log$ of $X_\log$ is constructed by formally adjoining to $\cO_X$ the sections of the sheaf $\cL$ (see (\ref{sec:rings.on.kn}) for details).

\begin{example}
Recall that the \emph{standard log point} $(\Spec \bC,\bN)$ is the log analytic space given by the analytic space $\Spec \bC$ (i.e. a reduced point), with monoid $\bC^\times\oplus \bN$ and morphism $\bC^\times\oplus \bN\to \bC=\cO_{\Spec \bC}$ described by $(a,n)\mapsto a\cdot 0^n$, where $0^0=1$. In general we denote by $(\Spec \bC,P)$ the log point with monoid $P$, defined in the analogous manner.

The Kato-Nakayama space of the standard log point is $\Hom(\bN,\bS^1)\cong \bS^1$, and the sheaf $\cL$ can be described as follows: on the universal cover $\pi\colon \bR\to \bS^1$, consider the constant sheaf $\underline{\bZ\oplus \bC}_\bR$, and make the group of deck transformations $\bZ$ act on this sheaf by $k\cdot (k', c)=(k', c+2kk'\pi i)$ (so that the sheaf acquires an equivariant structure). The result of descent to $\bS^1$ is precisely $\cL$. The map $2\pi i\bZ\to \pi^{-1}\cL$ can be described as $2\pi i k\mapsto (0,2\pi i k)$, and $\pi^{-1}\cL\to \pi^{-1}\tau^{-1}M^\gp\cong \underline{\bZ\oplus \bC^\times}_{\bR}$ is given by $(k,c)\mapsto (k,\exp(c))$.

\end{example}

There is an injective homomorphism of sheaves of abelian groups $\tau^{-1}\cO_X\to \cL$ defined on an open subset $U$ by sending a holomorphic function $f\in \tau^{-1}\cO_X$ to the pair $(\exp(f),i\, \Im(f))$, 
where $\exp(f)$ is seen as a section of $\tau^{-1}\cO_X^\times\subseteq \tau^{-1}M^\gp$. Moreover, we also have a homomorphism $\cL\to \tau^{-1}\overline{M}^\gp$ given by composing the first projection $\cL\to \tau^{-1}{M}^\gp$ with the quotient map $\tau^{-1}{M}^\gp\to \tau^{-1}\overline{M}^\gp$.

These maps fit into a short exact sequence
\begin{equation}\label{eq:extension2}
\xymatrix{
0\ar[r] & \tau^{-1}\cO_X \ar[r] &\ar[r] \cL &\ar[r] \tau^{-1}\overline{M}^\gp  &0
}
\end{equation}
of sheaves of abelian groups on $X_\log$: if a section $(m,\theta)$ of $\cL$ maps to zero in $\tau^{-1}\overline{M}^\gp$, then $m$ is a section of $\tau^{-1}\cO_X^\times$, and there is a unique ``logarithm'' for this pair, i.e. a section $f\in \tau^{-1}\cO_X$ such that $(\exp(f),i\,\Im(f))=(m,\theta)$.

The following construction is taken from \cite[Section 3.3]{ogus}. Let us tensor (\ref{eq:extension2}) by the constant sheaf $\bC$ (over $\bZ$ - we will omit this from the notation). We obtain
$$
\xymatrix{
0\ar[r] & \tau^{-1}\cO_X\otimes \underline{\bC}_{X_\log} \ar[r] &\ar[r] \cL \otimes \underline{\bC}_{X_\log}&\ar[r] \tau^{-1}\overline{M}^\gp \otimes \underline{\bC}_{X_\log}&0.
}
$$
Now let us consider the map $\tau^{-1}\cO_X\otimes \underline{\bC}_{X_\log}\to \tau^{-1}\cO_X^\times$ defined on generators as $f\otimes c\mapsto e^{c\cdot f}$, and the induced diagram
$$
\xymatrix{
0\ar[r] & \tau^{-1}\cO_X\otimes \underline{\bC}_{X_\log} \ar[r]\ar[d] &\ar[r]\ar[d] \cL \otimes \underline{\bC}_{X_\log}&\ar[r] \ar[d]^{=}\tau^{-1}\overline{M}^\gp \otimes \underline{\bC}_{X_\log}&0\\
0\ar[r] & \tau^{-1}\cO_X^\times \ar[r] &\ar[r] M_\log^\gp &\ar[r] \tau^{-1}\overline{M}^\gp \otimes \underline{\bC}_{X_\log}&0
}
$$
where $M_\log^\gp$ is the pushout of the diagram to its left.

Finally, given a quasi-coherent sheaf of monoids $\Lambda$ on $X$, with $\Lambda \subseteq \overline{M}_\bR=\overline{M}\otimes \bR_+ \subseteq \overline{M}^\gp\otimes \underline{\bC}_{X_\log}$, we can pullback the bottom extension of the last diagram to an extension of sheaves of monoids on $X_\log$
\begin{equation}\label{eq:extension3}
\xymatrix{
0\ar[r] & \tau^{-1}\cO_X^\times \ar[r] &\ar[r] M_{\Lambda} &\ar[r]\tau^{-1}\Lambda &0.
}
\end{equation}

\begin{remark}
In \cite{ogus}, the symbol $\Lambda$ is used in this same context to denote a log constructible sheaf of abelian groups in $\overline{M}^\gp\otimes \underline{\bC}_{X_\log}$, and not a sheaf of monoids.
\end{remark}

\begin{definition}
We will denote by $\cA_\Lambda$ the sheaf of $\tau^{-1}\cO_X$-algebras on $X_\log$, associated with the extension (\ref{eq:extension3}), by the procedure outlined in (\ref{sec:3.1}).
\end{definition}

It is clear that the construction of these extensions, as well as the objects that we are going to describe next, are compatible with strict base change.

\subsection{Sheaves of rings on $X_\log$}\label{sec:rings.on.kn}

Now consider $\Lambda=\overline{M}$, so that $\cA_{\Lambda}=\tau^{-1}\cA_X$, and note that there is a natural homomorphism of $\tau^{-1}\cO_X$-algebras $\tau^{-1}\cA_X\to \tau^{-1}\cO_X$, which is the pullback of a homomorphism $\cA_X\to \cO_X$ on $X$: each homogeneous piece $N_a$ of $\cA_X$ has a morphism of $\cO_X$-modules into $\cO_X$ given by the co-section $s_a\colon N_a\to \cO_X$, and the resulting map $\cA_X\to \cO_X$ is a homomorphism of algebras.

Moreover for every $\Lambda$ we have a natural homomorphism $\tau^{-1}\cA_X\to \cA_{\Lambda}$. We set $$\cO_{\Lambda}:=\cA_{\Lambda}\otimes_{\tau^{-1}\cA_X}\tau^{-1}\cO_X.$$ This is a sheaf of rings on $X_\log$, together with an injective map $\tau^{-1}\cO_X\to \cO_{\Lambda}$.

As anticipated above, $\cO_\Lambda$ should be loosely thought of as $\tau^{-1}\cO_X[\prod_i m_i^{\alpha_i}]$, where $\sum_i m_i\otimes \alpha_i$ are the sections of $\Lambda\subseteq \overline{M}\otimes \bR_+$, and the obvious relations are satisfied, for example, $m_i^1$ is identified with a corresponding local section $f_i\in \tau^{-1}\cO_X$. See Remark \ref{rmk:sheaves.description} below for a more precise statement, in a particular case.

\begin{example}\label{ex:st.log.pt}
Assume that $X$ is the standard log point $(\Spec \bC,\bN)$, and let $\bN \subseteq \Lambda\subseteq \bR_+$ be a $\bZ$-saturated monoid. The algebra $\cA_X$ in this case can be described as the $\bC$-algebra $\bigoplus_{n\in \bN} t^n\cdot \bC$. The morphism $\cA_X\to \cO_X=\bC$ sends $t^0$ to $1$, and $t^n$ to $0$ for $n>0$.

The Kato-Nakayama space $X_\log$ is isomorphic to $\bS^1$, and the sheaf $\tau^{-1}\cA_X$ is the constant sheaf $\underline{\bigoplus_{n\in \bN} t^n\cdot \bC}_{\bS^1}$. As for $\cA_\Lambda$, we have $\cA_\Lambda=\bigoplus_{\lambda\in \Lambda} N_\lambda$ for $\tau^{-1}\bC=\underline{\bC}_{\bS^1}$-invertible sheaves $N_\lambda$.

For $\lambda\notin \bN$, this line bundle will have non-trivial monodromy with respect to the action of the fundamental group $\pi_1(\bS^1)\cong \bZ$, and that can be described as follows. Consider the universal cover $\pi\colon \bR\to \bS^1$, and denote the composite $\bR\to \bS^1\to \Spec \bC$ by $\wti{\tau}$. For every $\lambda$, the pullback $\pi^{-1}N_\lambda$ is locally constant on $\bR$, hence it is constant, $N_\lambda\cong \underline{\bC}_\bR$. Let us formally write $t^\lambda$ for a generator, so that $N_\lambda=t^\lambda\cdot \underline{\bC}_\bR$. The group $\bZ$ of deck transformations of $\pi$ acts on this sheaf (in the sense that the sheaf has a $\bZ$-equivariant structure), by $k\cdot (t^\lambda  c)=e^{2\pi i k \lambda}t^\lambda c$. 
With this notation, we can write $\pi^{-1}\cA_\Lambda=\bigoplus_{\lambda\in \Lambda} t^\lambda\cdot \underline{\bC}_\bR$.

Furthermore, recall that $\cO_\Lambda=\cA_\Lambda\otimes_{\tau^{-1}\cA_X}\tau^{-1}\cO_X$. This has the effect (on the universal cover $\bR$) of identifying $t^n$ with its image in $\underline{\bC}_\bR=\wti{\tau}^{-1}\cO_X$, in the description above. Hence, if $\lambda>1$ and since $\Lambda$ is $\bZ$-saturated, this forces the image of $t^\lambda$ to be $0$, since $t^\lambda=t^{\lambda-1}\cdot t$, and $t$ maps to $0$ in $\bC$. Hence we have $$\pi^{-1}\cO_\Lambda=\bigoplus_{\lambda\in \Lambda\cap [0,1)} S^\lambda\cdot \underline{\bC}_\bR,$$ where multiplication is determined by $S^\lambda\cdot S^{\lambda'}=S^{\lambda+\lambda'}$ if $\lambda+\lambda'<1$, and $0$ otherwise.

Here $S^\lambda$ should be thought of as ``$\,z^\lambda\,$'', where $z$ is the coordinate of the $\bA^1$, of which the standard log point is the origin (i.e. the generator of $\overline{M}=\bN$). 
\end{example}

One of the main points of this paper is that the ringed space $(X_\log,\cO_\Lambda)$ can be seen as a sort of root stack of $X$ with respect to coefficients in $\Lambda$, for any given $\overline{M}^\gp$-saturated quasi-coherent sheaf of monoids $\overline{M}\subseteq \Lambda\subseteq \overline{M}_\bR$. In fact, there is a canonical homomorphism of sheaves of monoids $\alpha_\Lambda\colon M_\Lambda\to \cO_\Lambda$:
as explained in \cite[I.2.3]{lorenzon}, a section $m\in M_\Lambda$ determines a trivialization of the torsor $P_{\overline{m}}$ of preimages in $M_\Lambda$ of $\overline{m}$ (which denotes the image of $m$ in $\tau^{-1}\Lambda$), and consequently a section $e_m \in N_{\overline{m}}\subseteq \cA_\Lambda$ of the associated line bundle. We set $\alpha_\Lambda(m)$ to be the image of $e_m$ in $\cO_\Lambda=\cA_{\Lambda}\otimes_{\tau^{-1}\cA_X}\tau^{-1}\cO_X$.
The map $\alpha_\Lambda$ is not properly a log structure, since the units in $\cO_\Lambda$ are in general bigger than the units in $\tau^{-1}\cO_X$ (already for example for the standard log point), but the induced homomorphism $\mathbf{M}_\Lambda=M_\Lambda\oplus_{\tau^{-1}\cO_X^\times}\cO_\Lambda^\times \to \cO_\Lambda$ is a log structure.

From the following extension (derived from extension (\ref{eq:extension3}))
$$
\xymatrix{
0\ar[r] & \cO_\Lambda^\times \ar[r] &\ar[r] \mathbf{M}_{\Lambda} &\ar[r]\tau^{-1}\Lambda &0
}
$$
together with the homomorphism $\mathbf{M}_\Lambda\to \cO_\Lambda$, as described in (\ref{sec:3.1}) we obtain a symmetric monoidal functor $L^\Lambda\colon \tau^{-1}\Lambda\to \Div_{(X_\log,\cO_\Lambda)}$ from $\tau^{-1}\Lambda$ to the stack over opens of $X_\log$ of $\cO_\Lambda$-invertible sheaves with a global section. If $N_\lambda$ is the invertible sheaf of $\tau^{-1}\cO_X$-modules associated with $\lambda\in \tau^{-1}\Lambda$ via the extension (\ref{eq:extension3}) above, then the invertible sheaf of $\cO_\Lambda$-modules associated with $\lambda$ via the last extension is canonically isomorphic to $N_\lambda\otimes_{\tau^{-1}\cO_X} \cO_\Lambda$, and hence the corresponding sheaf of $\cO_\Lambda$-algebras is simply $$\cA_\Lambda\otimes_{\tau^{-1}\cO_X} \cO_\Lambda=\bigoplus_{\lambda\in \tau^{-1}\Lambda}(N_\lambda\otimes_{\tau^{-1}\cO_X} \cO_\Lambda).$$
These data give a  log structure on $X_\log$ that extends the one of $X$ by adjoining ``real powers'' of the sections of $\overline{M}$ to the structure sheaf. The invertible sheaves $L^\Lambda(\lambda)=(L^\Lambda_\lambda, s_\lambda)\in  \Div_{(X_\log,\cO_\Lambda)}$ for sections $\lambda \in \tau^{-1}\Lambda$, duals of the sheaves $N_\lambda\otimes_{\tau^{-1}\cO_X} \cO_\Lambda$ mentioned above, will be fundamental for the correspondence with parabolic sheaves (strictly speaking, after tensoring them with $\cO_X^\log$ - see below). If the log structure is divisorial, morally $s_\lambda$ should be thought of as $f^\lambda$, where $f$ is a local equation of a branch of the boundary divisor, and the sheaf $L^\Lambda_\lambda$ is to be thought of as $f^{-\lambda}\cdot \cO_\Lambda$.

\begin{example}
If for example $X=\bA^1$ and $\bN\subseteq \Lambda\subseteq \bR_+$, then $\cA_\Lambda$ is isomorphic to $\tau^{-1}\cO_X$ outside $\tau^{-1}(0)$, and $\cA_\Lambda(U)=\bigoplus_{\lambda \in \Lambda} N_\lambda(U)$ for $U$ intersecting $\tau^{-1}(0)$.

Here the sheaf $N_n$ for $n\in \bN$ can be seen as the pullback to $X_\log$ of $t^n\cdot \cO_X$, i.e. of the sheaf $\cO_X$ for $n=0$, and the sheaf defined by $(t^n\cdot \cO_X)(U):=\cO_X(U)$ if $U$ contains the origin and $0$ otherwise, for $n\neq 0$ (so that in this case $t^n$ is ``supported at the origin''). In general $N_\lambda$ can be described, on the universal cover $\bH=\bR_{\geq 0}\times \bR\to \bR_{\geq 0}\times \bS^1=X_\log$, as the sheaf $t^\lambda\cdot \cO_X$ (where $t^\lambda$ is likewise supported on the preimage of the origin, for $\lambda\neq 0$), with the $\bZ$-equivariant structure determined by $k\cdot t^\lambda=e^{2\pi i k \lambda}t^\lambda$.

The sheaf $\cA_\Lambda$ is the direct sum of these line bundles, and the sheaf $\cO_\Lambda=\cA_\Lambda\otimes_{\tau^{-1}\cA_X}\tau^{-1}\cO_X$ can be described on $\bH$ by adjoining to $\cO_X$ sections $S^\lambda$ for $\lambda\in \Lambda$ (supported on the preimage of the origin), on which $\bZ$ acts as above, and with multiplication defined so that $S^\lambda=z^n\cdot S^{\lambda'}$ if $\lambda=n+\lambda'$ and $n \in \bN$. Note that the restriction of this description to the origin in $\bA^1$ recovers the discussion of Example \ref{ex:st.log.pt}.
\end{example}

For reasons that will be explained later (see Remark \ref{rmk:exact}), we need to also tensor the sheaves $\cO_\Lambda$ and the line bundles $L_\lambda^\Lambda$ with the ``structure sheaf'' $\cO_X^\log$ of $X_\log$. We briefly recall the construction of this sheaf; see \cite[Section 3]{KN}, \cite[Section 1]{illusie-kato-nakayama} or \cite[Section 3.3]{ogus} for more details.

The sheaf $\cO_X^\log$ is the universal sheaf of $\tau^{-1}\cO_X$-algebras with a compatible morphism of sheaves of abelian groups $\cL\to \cO_X^\log$, where $\cL$ is the sheaf of abelian groups of (\ref{sec:extensions}). An explicit construction is as the quotient
$$
\cO_X^\log=(\tau^{-1}\cO_X\otimes_\bZ \Sym_{\bZ}\cL)/\cI
$$
where $\cI$ is the sheaf of ideals generated by local sections of the form $f\otimes 1 -1\otimes h(f)$ for $f\in \tau^{-1}\cO_X$, and where $h\colon \tau^{-1}\cO_X\to \cL$ is the map in the exact sequence (\ref{eq:extension2}), defined by
$$h(f)=(\exp(f), i \, \Im(f))\in \cL\cong \tau^{-1}M^\gp \times_{\bS^1_{X_\log}} i\bR_{X_\log}.$$
The stalks of $\cO_X^\log$ can be described as follows: let $y\in X_\log$ be a point with image $x=\tau(y)$, and let $m_1,\hdots, m_r$ be elements of $\cL_y$, whose image under $\cL_y\to (\tau^{-1}\overline{M}^\gp)_y\cong \overline{M}^\gp_x$ is a $\bZ$-basis. Then there is an $\cO_{X,x}$-linear isomorphism $\cO_{X,x}[T_1,\hdots, T_r]\cong \cO^\log_{X,y}$ given by $T_i\mapsto {m_i}$, where we are slightly abusing notation in denoting the image of $m_i$ in $\cO^\log_{X,y}$ by the same symbol. Hence, morally $\cO_X^\log$ should be thought of as the sheaf $\tau^{-1}\cO_X[\log(\overline{m_1}), \hdots, \log(\overline{m_r})]$, where $\overline{m_1},\hdots, \overline{m_r}$ is a local basis of $\overline{M}^\gp$. 

\begin{example}\label{ex:st.log.log}
Assume that $X$ is the standard log point $X=(\Spec \bC, \bN)$. The sheaf $\cO_X^\log$ on the Kato-Nakayama space $\bS^1$ can be described as follows. As usual take the universal cover $\wti{X}_\log\cong \bR\to \bS^1$, and consider the constant sheaf of $\bC$-algebras $\underline{\bC[T]}_\bR$, equipped with the $\bZ$-equivariant structure determined by $k\cdot T=T-2\pi i k$. The result of descent to $X_\log=\bS^1$ is the sheaf $\cO_X^\log$.

In other words, for every point $y\in \bS^1$ we have $\cO^\log_{X,y}\cong \bC[T]$, but by moving around the circle once, $T$ becomes $T-2\pi i$ (this makes sense if we think of $T$ as ``$\,\log \,z\,$'', where $z$ is the coordinate of $\bA^1$, and the standard log point is the origin). For a detailed explanation of the ``minus'' sign in this action, we refer the reader to \cite[Appendix A1]{kato-usui}.

Note that the global sections of $\cO_X^\log$ are only the constants, i.e. $\tau_*\cO_X^\log\cong \cO_X$ in this case. This is true also in general (Proposition \ref{lemma:push.structure}).
\end{example}

\begin{remark}
As mentioned in the introduction of \cite{TVnew}, it is natural to ask if the map of topological stacks $\phi_\infty\colon X_\log\to \radice[\infty]{X}_\top$ (whose construction is recalled briefly in (\ref{sec:kn})) can be promoted in a natural way to a morphism of ringed topological stacks, where we are equipping $X_\log$ with the sheaf $\cO_X^\log$, and $\radice[\infty]{X}_\top$ with its structure sheaf $\cO_\infty$. The idea here would be the $\cO_X^\log$ has logarithms of local sections of $\overline{M}$, so the $n$-th roots of such sections in the sheaf $\cO_\infty$ should have an image in $\cO_X^\log$.

It turns that it is hard to make sense of this if we want a map of rings: if we want to define $\cO_\infty\to \cO_X^\log$ as a sort of logarithm, by sending $t^{\frac{1}{n}}$ to $\frac{1}{n}\log(t)$, then it will not be a homomorphism of rings. A second possibility would be to pass to convergent power series in $\cO_X^\log$, and try to impose that $\exp(\frac{1}{n}\log(t))=\text{ image of } t^\frac{1}{n}$. This also makes little sense in some situations, because an exponential had better be invertible, but $t^{\frac{1}{n}}$ sometimes is nilpotent, for example if $X$ is the standard log point.

Instead of trying to make sense of this, the solution that we adopt in this paper is to adjoin all needed roots to $\cO_X^\log$ (as is done in \cite[Section 4]{illusie-kato-nakayama}, for example), in order to have a map as above.
\end{remark}

We will set $$\cO_\Lambda^\log:=\cO_\Lambda\otimes_{\tau^{-1}\cO_X} \cO_X^\log.$$ This is again a sheaf of rings on $X_\log$, with an injective homomorphism $\tau^{-1}\cO_X\to\cO_\Lambda^\log$. Morally, on top of adding every possible real power $m^\lambda$ of sections of $\overline{M}$ and exponents in $\Lambda$, we are also adding formal logarithms $\log(m)$.

By tensoring with $\cO_X^\log$ we can lift the symmetric monoidal functor $L^\Lambda\colon \tau^{-1}\Lambda\to \Div_{(X_\log,\cO_\Lambda)}$ to a symmetric monoidal functor $\tau^{-1}\Lambda\to \Div_{(X_\log,\cO_\Lambda^\log)}$, that we will keep denoting by the same symbol. The line bundle associated with $\lambda\in \tau^{-1}\Lambda$ via this new symmetric monoidal functor is simply $L^\Lambda_\lambda \otimes_{\tau^{-1}\cO_X}\cO_X^\log$ (and will be denoted again by $L_\lambda^\Lambda$ - we will make no use of the invertible sheaf before tensoring with $\cO_X^\log$).

The functor $L^\Lambda\colon \tau^{-1}\Lambda\to \Div_{(X_\log,\cO_\Lambda^\log)}$ extends the symmetric monoidal functor $L\colon \overline{M}\to \Div_X$ on $X$, so in particular for $\lambda=m\in \overline{M}$ we have $L_m^\Lambda\cong \tau^{-1}L_m\otimes_{\tau^{-1}\cO_X}\cO_\Lambda^\log$ (and the global sections are also identified).
Moreover, as in \cite[Proposition 5.2]{borne-vistoli}, we get an induced symmetric monoidal functor $\tau^{-1}\Lambda^\gp\to \Pic_{(X_\log,\cO_\Lambda^\log)}$, that we will also denote by $L^\Lambda$, by setting $L^\Lambda_{\lambda-\mu}=L^\Lambda_\lambda\otimes_{\cO_\Lambda^\log} (L^\Lambda_\mu)^\vee$.

\begin{remark}
Some version of the construction of $\cO_\Lambda^\log$ has appeared in \cite{illusie-kato-nakayama}. In (\ref{sec:comparison.root}) below we point out that the sheaf of rings $\cO_X^\klog$ that is used in that paper, and is obtained using the Kummer-\'etale site of $X$, is canonically isomorphic to our $\cO^\log_{\overline{M}_\bQ}$. In \cite{ogus}, Ogus uses larger sheaves of rings $\cO_X^\log\otimes_{\tau^{-1}\cO_X} \cA_{\Lambda^\gp}$, that are related to our $\cO_\Lambda^\log$, but not exactly the same.
\end{remark}

\subsection{Local description}\label{sec:local.description}

We will need a local description of some of the constructions that we described up to this point.

Let us suppose that $X$ has a Kato chart $X\to \bC(P)$. Note first of all that with this assumption, every line bundle $L_a$ coming from the DF structure $L\colon P\to \Div(X)$ of $X$ is canonically trivialized. For the log analytic space $\bC(P)$ with its natural log structure we have  $\bC(P)_\log=(\bR_{\geq 0}\times \bS^1)(P)=\Hom(P,\bR_{\geq 0}\times \bS^1)$, and we will also use the universal cover $\wti{\bC(P)}_\log=\bH(P)=\Hom(P,\bH)$ (recall that $\bH=\bR_{\geq 0}\times \bR$). The morphism $\wti{\bC(P)}_\log\to \bC(P)_\log$ is induced by the map $\bH\to \bR_{\geq 0}\times \bS^1$ given by $(x,y)\mapsto (x,e^{iy})$, and it is a $\bZ(P)=\Hom(P,\bZ)$-principal bundle. Note that $\Hom(P,\bZ)=\Hom(P^\gp,\bZ)\cong \bZ^r$ non-canonically, where $r$ is the rank of the free abelian group $P^\gp$.

As for the analytic space $X$, there is a diagram with cartesian squares
$$
\xymatrix{
\widetilde{X}_\log\ar[r]\ar[d] & \bH(P)\ar[d]\\
X_\log\ar[r]\ar[d] & (\bR_{\geq 0}\times \bS^1)(P)\ar[d]\\
X\ar[r] & \bC(P)
}
$$
where $\wti{X}_\log\to X_\log$ is a $\bZ(P)=\Hom(P,\bZ)$-covering space for the action induced on $\wti{X}_\log=X\times_{\bC(P)} \bH(P)$ by the one on the second factor. We will need to consider the analogous constructions of the sheaves $\cA_\Lambda$ and $\cO_\Lambda$ and $\cO_\Lambda^\log$ on the space $\wti{X}_\log$: in this case we will add a tilde to remind ourselves that we are on $\wti{X}_\log$ rather than on $X_\log$. We will denote by $\pi\colon \wti{X}_\log\to X_\log$ the natural map, and by $\wti{\tau}$ the composite $\tau\circ \pi$.

Note that, since the bottom map of the last diagram is strict, every extension of the form
$$
\xymatrix{
0\ar[r] &  \tau^{-1}\cO_X^\times \ar[r] & M_\Lambda \ar[r] & \tau^{-1}\Lambda \ar[r] & 0
}
$$
where $\Lambda$ is pulled back from a quasi-coherent sheaf of monoids on $\bC(P)$, is also pulled back from the analogous extension on $\bC(P)_\log$. The same is true of the sheaves $\cA_\Lambda, \wti{\cA}_\Lambda, \cO_\Lambda, \wti{\cO}_\Lambda, \cO_\Lambda^\log, \wti{\cO}_\Lambda^\log$ and of the Deligne-Faltings structure giving the sheaves $L_\lambda^\Lambda$ and $\wti{L}_\lambda^\Lambda$ (both before and after tensoring with $\cO_X^\log$).

According to this description of $X_\log$ as the quotient of $\widetilde{X}_\log$ for the free action of $\bZ(P)$, we will describe sheaves and maps between sheaves on $X_\log$ as objects on $\wti{X}_\log$ that are $\bZ(P)$-equivariant. 

Assume now that $\overline{M}\subseteq \Lambda\subseteq \overline{M}_\bR$ is a $\overline{M}^\gp$-saturated quasi-coherent sheaf of monoids, together with a global chart $\Lambda_0\to \Lambda(X)$, with $P\subseteq \Lambda_0\subseteq P_\bR$. Let us describe the sheaf $\wti{\cO}_\Lambda$ in terms of $\Lambda_0$.

\begin{notation}\label{notation:variables}
In order to avoid confusion, in this situation and for $p \in P$ we will denote
\begin{itemize}
\item by $x^p$ the element of $\bC[P]$, image of $p$ via $P\to \bC[P]$,
\item by $f_p$ the section of $\cO_X$, image of $p$ via $P\to \cO_X(X)$,
\item by $t^p$ the ``placeholder variable'' in the sheaf $\cA_P=\bigoplus_{p\in P}t^p\cdot \cO_X$ on $X$, and
\item by $s_p$ the global section of the invertible sheaf $L_p$, image of $p$ via $P\to \Div(X)$.
\end{itemize}
\end{notation}

Consider the sheaf $\wti{\cA}_{\Lambda_0}:=\wti{\cA}_P \otimes_{\bC[P]} \bC[\Lambda_0]$, where $\bC[P]\to \wti{\cA}_P$ sends $x^p$ to the element $t^p$ of $\wti{\cA}_P=\bigoplus_{p\in P}t^p\cdot \wti{\tau}^{-1}{\cO}_X$. We also set
$$\wti{\cO}_{\Lambda_0}:=\wti{\cA}_{\Lambda_0}\otimes_{\wti{\cA}_P} \wti{\tau}^{-1}{\cO}_X=\wti{\tau}^{-1}{\cO}_X\otimes_{\bC[P]} \bC[\Lambda_0],$$
where $\wti{\cA}_P\to \wti{\tau}^{-1}{\cO}_X$ sends $t^p$ the the section $f_p\in \wti{\tau}^{-1}{\cO}_X$, and correspondingly $\bC[P]\to \wti{\tau}^{-1}{\cO}_X$ sends $x^p$ to $f_p$. Note that the group $\bZ(P)$ acts on these sheaves, by acting on $\bC[\Lambda_0]$ via $g\cdot t^\lambda=e^{2\pi i g_\bR(\lambda)} t^\lambda$, where here and from now on we denote by $g_\bR\colon P^\gp \otimes_\bZ \bR \to \bR$ the natural linear extension of the homomorphism $g\colon P^\gp\to \bZ$. Here $t^\lambda$ should be read as $e^{2\pi i \lambda\cdot \log(t)}$, and the action of $g$ is by ``translation'' on $\log(t)$.

There are surjective homomorphisms $\wti{\cA}_{\Lambda_0}\to \wti{\cA}_\Lambda$ and $\wti{\cO}_{\Lambda_0}\to \wti{\cO}_\Lambda$, whose kernel is the ideal generated by sections of the form ``$\, t^\lambda-1\,$'' (interpreted in the obvious way in the two sheaves) with $\lambda$ a local section in the kernel of $(\Lambda_0)_X \to \Lambda$.
This gives an explicit description for the sheaves $\wti{\cA}_\Lambda$ and $\wti{\cO}_\Lambda$, similar to the one we obtained above for $\wti{\cA}_{\Lambda_0}$ and $\wti{\cO}_{\Lambda_0}$, where the monoid $\Lambda_0$ is replaced by the sheaf $\Lambda$. Examples \ref{examp:sheaves.stdd.log.pt} and \ref{examp:sheaves.std.log.pt} below give completely explicit descriptions of these sheaves over log points.

\begin{remark}\label{rmk:sheaves.description}
If $X=\bC(P)$ and $\Lambda$ has a chart $P\subseteq \Lambda_0\subseteq P_\bR$, we can describe $\cO_\Lambda$ on $X_\log$ even more concretely, in the style of \cite[(1.1) and (3.2)]{illusie-kato-nakayama}. 

Let $U=(\Spec \bC[P^\gp])_\an\subseteq X$, and note that this embedding lifts to $j\colon U\subseteq X_\log$. Moreover the constant sheaf $\underline{P^\gp}_{X_\log}$ can be seen as a subsheaf of $j_*\cO_U^\times$. Then the sheaf $\cO_\Lambda$ can be identified with the subsheaf of rings of $j_*\cO_U$, generated by $\tau^{-1}\cO_X$ and by local sections of the form $\prod_i p_i^{\alpha_i}$, for $\sum_i p_i\otimes \alpha_i \in \Lambda_0\subseteq P\otimes \bR_+$,
and where the $p_i$ are seen as sections of $\underline{P^\gp}_{X_\log}\subseteq j_*\cO_U^\times$. Here by $p^\alpha$ with $p\in P$ and $\alpha \in \bR_+$ we mean the following: by identifying $p$ as a section of  $j_*\cO_U^\times$, if we are on a small enough open subset we can choose a logarithm, i.e. a function $\log(p) \in j_*\cO_U$ such that $\exp(\log(p))=p$ for the usual exponential map $j_*\cO_U\to j_*\cO_U^\times$. Then $p^\alpha:=\exp(\alpha\cdot\log(p))\in j_*\cO_U$. Of course different choices of $\log(p)$ will give different sections ``of the form $p^\alpha$'', that are related by the action of the monodromy around the boundary.

In the same manner, the sheaf $\cO_\Lambda^\log$ is obtained by adding, on top of the previous sections, also local logarithms $\log(p)$ for sections of $\underline{P^\gp}_{X_\log}$. For $\Lambda=\overline{M}_\bQ$, this coincides with the description of $\cO_X^\klog$ in \cite[3.2]{illusie-kato-nakayama}.

\end{remark}

We can also describe the invertible sheaf $\wti{L}_{\lambda}^\Lambda$ on $\wti{X}_\log$ (before tensoring with $\cO_X^\log$): this is an invertible sheaf of $\wti{\cO}_\Lambda$-modules, which is globally trivial, and we will think about it as $\wti{L}_{\lambda}^\Lambda=t^{-{\lambda}}\cdot \wti{\cO}_\Lambda$. This sheaf also has a natural $\bZ(P)$-equivariant structure, a ``shifted'' version of the one of $\wti{\cO}_\Lambda$: for a section $t^{-\lambda}\cdot s$, an element $g\in \bZ(P)$ acts by $g\cdot (t^{-\lambda}\cdot s)=e^{-2\pi i g_\bR(\lambda)}(t^{-\lambda}\cdot (g\cdot s))$, where $g$ acts on $s\in \wti{\cO}_\Lambda$ as explained above. The sheaf on $X_\log$ obtained by descent is $L^\Lambda_{\lambda}$. The global section $\wti{s}_{\lambda}$, given by the natural map $\wti{\cO}_\Lambda\to \wti{L}_{\lambda}^\Lambda$ (that can be seen as multiplication by $t^\lambda \in \wti{\cO}_\Lambda$), also descends to the global section $s_{\lambda}$ of $L_{\lambda}^\Lambda$. As already noted above, as sheaves of $\wti{\cO}_\Lambda$-modules, we have $\wti{L}_{\lambda}^\Lambda \cong \wti{\cO}_\Lambda$, but the action of $\bZ(P)$ is different, unless $\overline{\lambda}=0$ in $\Lambda^\gp/\overline{M}^\gp$, i.e. $\lambda\in \overline{M}^\gp$. In fact, observe also that if $\lambda$ is a section $m$ of $\overline{M}^\gp$, then $\wti{L}^\Lambda_{\lambda}=t^{-m}\cdot \wti{\cO}_\Lambda\cong \wti{\tau}^{-1}L_m\otimes_{\wti{\tau}^{-1}\cO_X}\wti{\cO}_\Lambda$, where $L\colon \overline{M}^\gp\to \Pic_X$ is the functor associated with the Deligne-Faltings structure of the log analytic space $X$. 

Now let us bring the sheaf $\cO_X^\log$ into the picture. On $\wti{X}_\log$ we can tensor the sheaf $\wti{\cO}_\Lambda$ and the various line bundles $\wti{L}_\lambda^\Lambda$ with $\wti{\cO}_X^\log$ over $\wti{\tau}^{-1}\cO_X$ to obtain $\wti{\cO}_\Lambda^\log$ and the $\wti{\cO}_\Lambda^\log$-line bundles $\wti{L}_\lambda^\Lambda\otimes_{\wti{\tau}^{-1}\cO_X} \wti{\cO}_X^\log$ (along with the induced global sections), that, as in the previous section, we will continue to denote by $\wti{L}_\lambda^\Lambda$.

The local descriptions of these sheaves are obtained from the ones described above, by tensoring with the sheaf $\wti{\cO}_X^\log$ on $\wti{X}_\log$. A description of this latter sheaf is given in \cite[Lemma 3.3.4]{ogus}: if $\cI$ denotes the sheaf of ideals in $\wti{\tau}^{-1}\cO_X\otimes_\bZ \Sym_\bZ P^\gp$ generated by elements of the form $f_p\otimes 1-1\otimes p$ for $p$ a local section of $P$ that maps to a unit in $M$ (via the chart morphism $\underline{P}\to M$), then we have an isomorphism
$$
\wti{\cO}_X^\log\cong (\wti{\tau}^{-1}\cO_X\otimes_\bZ \Sym_\bZ P^\gp)/\cI.
$$
The resulting sheaves have an induced $\bZ(P)$-action, and the results of descent to $X_\log$ are the sheaf $\cO_\Lambda^\log$ and the line bundles $L_\lambda^\Lambda$.

To conclude this discussion, we give a completely explicit description of these sheaves on fibers of the map $\wti{X}_\log\to X$ (or equivalently over log points). These will be important for a few proofs later on.

\begin{example}\label{examp:sheaves.stdd.log.pt}
Let us give an explicit description of the sheaf $\wti{\cO}_X^\log$ on the fibers of $\wti{\tau}\colon \wti{X}_\log\to X$ (generalizing Example \ref{ex:st.log.log}). Fix $x\in X$, and call $P=\overline{M}_x$. Then we can find a Kato chart with monoid $P$ for $X$ around $x$.

Taking the fibers over $x$, we can write $(X_\log)_x\cong \Hom(P^\gp,\bS^1)$ and $(\wti{X}_\log)_x\cong \Hom(P^\gp, \bR)$. If we fix an isomorphism $P^\gp\cong \bZ^r$, we are looking at the universal cover $\bR^r\to (\bS^1)^r$. The sheaf $\wti{\cO}_X^\log$ on $(\wti{X}_\log)_x=\bR^r$ is the constant sheaf $\underline{\bC[T_1,\hdots, T_r]}_{\bR^r}$, and it has a $\bZ^r$-equivariant structure, where the $i$-th standard generator $g_i$ acts on $T_j$ by sending it to $T_j-2\pi i \delta_{ij}$, where $\delta_{ij}$ is the Kronecker delta. By descending this to $(X_\log)_x=(\bS^1)^r$, we obtain the sheaf $\cO_X^\log$.

\end{example}

\begin{example}\label{examp:sheaves.std.log.pt}
Generalizing the previous example, let us also describe the sheaf $\wti{\cO}_\Lambda^\log$ and the line bundles $\wti{L}_\lambda^\Lambda$ on the fibers of $\wti{X}_\log\to X$. By base changing via $x\to X$, we can assume that $X$ is  a log point $X=(\Spec \bC, P)$, where as in the previous example $P=\overline{M}_x$.

Choose elements $p_1,\hdots, p_r\in P$ forming a basis of $P^\gp$ as a $\bZ$-module. The pullback $\wti{L}_\lambda^\Lambda$ to $\wti{X}_\log$ is the constant sheaf on $\wti{X}_\log$ associated with $${t^{-\lambda}\cdot \bC[T_1,\hdots, T_r, S^{\lambda'} \mid \lambda'\in \Lambda^+]/(S^{\lambda'+\lambda''}-S^{\lambda'}\cdot S^{\lambda''} \,, \, S^p \mid \lambda', \lambda''\in \Lambda^+, p\in P^+)}$$
where $T_i$ are independent variables (for $\lambda=0$ this gives a description of the sheaf of rings $\wti{\cO}_\Lambda^\log$). Note that $S^{\lambda'} \cdot S^{\lambda''}=0$ if $\lambda'+\lambda'' \in \langle P^+ \rangle \subseteq \Lambda$, where recall that $P^+=P\setminus \{0\}$ and $\langle - \rangle$ denotes the generated ideal in $\Lambda$. In particular every $S^{\lambda'}$ is nilpotent, since $n \lambda' \in \langle P^+ \rangle$ for $n$ big enough.

This sheaf has a natural $\bZ(P)=\Hom(P,\bZ)$-equivariant structure: denote by $g_i \in \bZ(P)$ the dual of the chosen basis element $p_i$ (i.e. the only element of $\bZ(P)$ such that $g_i(p_j)=\delta_{ij}$), then
\begin{align*}
g_i\cdot t^{-\lambda} & =e^{2\pi i (g_i)_\bR(-\lambda)} t^{-\lambda} \\
g_i \cdot t^{-\lambda}T_j& =e^{2\pi i (g_i)_\bR(-\lambda)} t^{-\lambda}(T_j-2\pi i \delta_{ij}) \\ 
g_i \cdot t^{-\lambda} S^{\lambda'}& =e^{2\pi i (g_i)_\bR(\lambda'-\lambda)}t^{-\lambda}S^{\lambda'}
\end{align*}
where $\delta_{ij}$ is the Kronecker delta, and $(g_i)_\bR$ denotes as usual the extension of $g_i\colon P^\gp\to \bZ$ to a linear map $(g_i)_\bR\colon P^\gp\otimes_\bZ\bR\to \bR$.

The result of descent to $X_\log$ is the line bundle $L_\lambda^\Lambda$ (which is therefore a locally constant sheaf). We can loosely write
$$
L_\lambda^\Lambda= t^{-\lambda}\cdot  \bC[\log(p_i), \prod_j q_j^{\alpha_{j}}]
$$
where $\log(p_i)$ and $\prod_j q_j^{\alpha_{j}}$ for $q_j\in P$ and $\alpha_j \in \bR_+$ denote local sections that are the result of descent respectively of the sections $T_i$ and of the sections $S^\lambda$, with $\lambda=\sum_j q_j\otimes \alpha_{j}\in \Lambda\subseteq P_\bR$  over some open subset of $X_\log$. These sections are the restrictions of the local sections of Remark \ref{rmk:sheaves.description} (the log point $X=(\Spec \bC, P)$ can be identified with the ``vertex'' of the affine toric variety $\Spec \bC[P]$).

We will need a slight variant of this description, where we have a point $\Spec \bC$, and this is regarded as a locally ringed space via a local ring $A$, and equipped with a log structure $\beta\colon A^\times\oplus P\to A$, where $P$ is a toric monoid. The Kato-Nakayama space $X_\log$ in this case topologically is the same as the one of the log point $(\Spec \bC,P)$, but all the sheaves on it are also tensored with $A$ over $\bC$. 

In this situation we have an analogous description of the line bundle $\wti{L}_\lambda^\Lambda$ on $\wti{X}_\log$ as the constant sheaf associated with the $\wti{\cO}_\Lambda^\log$-module 
$$
t^{-\lambda}\cdot A[T_1,\hdots, T_r, S^{\lambda'} \mid \lambda'\in \Lambda^+]/(S^{\lambda'+\lambda''}-S^{\lambda'}\cdot S^{\lambda''} \,, \, S^p-f_p \mid \lambda', \lambda''\in \Lambda^+, p\in P^+)
$$
but notice that this time if $\lambda'+\lambda''=p+\mu$ with $\lambda', \lambda'', \mu\in \Lambda^+$ and $p \in P^+$, then $S^{\lambda'}\cdot S^{\lambda''}=f_p \cdot S^\mu$ (recall from Notation \ref{notation:variables} that $f_p$ denotes the image of $p\in P$ in $A$ via $\beta$).
By descending on $X_\log$, we can loosely write 
$$
L_\lambda^\Lambda= t^{-\lambda}\cdot  A[\log(p_i), \prod_j q_j^{\alpha_{j}}]
$$
where again here $\log(p_i)$ and $ \prod_j q_j^{\alpha_{j}}$ denote local sections.
\end{example}

\subsection{Sheaves of modules on $X_\log$}\label{sec:sheaves.of.modules}

The last ingredient that we need in order to discuss the correspondence with parabolic sheaves is a discussion of quasi-coherent sheaves of modules on $X_\log$ and their properties.

In general if $(T,\cO_T)$ is a ringed space, the usual meaning for ``quasi-coherent sheaf of $\cO_T$-modules'' refers to the existence of local presentations
$$
\xymatrix{
 \cO_T^{\oplus J} \ar[r] &  \cO_T^{\oplus I} \ar[r] & F\ar[r] & 0
}
$$
with possibly infinite index sets $I,J$. In complete generality, it is not clear how well-behaved the category of such sheaves is.

We will instead adopt the terminology of \cite[Section 2.1]{conrad} (as for parabolic sheaves, see Remark \ref{rmk:qcoh}). For quasi-coherence of sheaves of $\cO_\Lambda^\log$-modules on $X_\log$, we will use the following definition. As usual we denote by $\tau\colon X_\log\to X$ the projection, and $\Lambda$ is a $\overline{M}^\gp$-saturated quasi-coherent sheaf of monoids with $\overline{M}\subseteq \Lambda\subseteq \overline{M}_\bR$.

\begin{definition}\label{def:qcoh}
We will say that a sheaf $F$ of $\cO_\Lambda^\log$-modules on $X_\log$ is \emph{finitely presented} if locally on $X$ (i.e. locally on $X_\log$ for open sets of the form $\tau^{-1}U$, with $U\subseteq X$ open) it admits a presentation of the form
$$
\xymatrix{
\bigoplus_{j} L_{\lambda_j}^\Lambda \ar[r] & \bigoplus_{i} L_{\lambda_i}^\Lambda \ar[r] & F\ar[r] & 0
}
$$
for which the index sets for $i$ and $j$ are finite.

We will say that a sheaf $F$ of $\cO_\Lambda^\log$-modules on $X_\log$ is \emph{quasi-coherent} if locally on $X$ it can be written as filtered colimit of finitely presented sheaves.
\end{definition}

Here the sheaves $L_\lambda^\Lambda$ are the line bundles on $X_\log$ of (\ref{sec:rings.on.kn}).

\begin{remark}
We refrain from using the term ``coherent'' for the sheaves that locally admit finite presentations, because already on the infinite root stack, the structure sheaf might not be coherent (see \cite[Example 4.17]{TV}), so that ``finitely presented'' and ``coherent'' are not equivalent notions. We expect the same to happen in this context. Note that, since $\cO_X$ is coherent, on $X$ itself it is indeed true that finite presentation and coherence are equivalent.
\end{remark}

We will denote the category of finitely presented sheaves of  $\cO_\Lambda^\log$-modules by $\FP_\Lambda(X)$, and the category of quasi-coherent sheaves by $\Qcoh_\Lambda(X)$.

\begin{remark}
Some comments about the definition above are in order. 

First of all, note that the line bundles $L_{\lambda}^\Lambda$ are locally isomorphic to $\cO_\Lambda^\log$ on $X_\log$, so a finitely presented sheaf as we defined it will also be finitely presented in the ``standard'' sense (of admitting local presentations as a cokernel of a map between free sheaves of finite rank) on the ringed space $(X_\log,\cO_\Lambda^\log)$. In view of the correspondence with parabolic sheaves, though, we want to restrict to the class that admit local presentations \emph{on opens pulled back from} $X$. Once we choose to do this, using direct sums of the non-trivial line bundles $L_\lambda^\Lambda$ is forced.

This condition on having presentations for a  topology of $X_\log$ that is coarser than the natural one should be compared with the situation of root stacks: the map $\sqrt[n]{X}\to X$ is a homeomorphism on the associated topological spaces, and even though one can localize in the \'etale topology around points of $\sqrt[n]{X}$ where there are non-trivial stabilizers, one can not ``physically'' localize on the fibers (since they are single points!), as one can do on the Kato-Nakayama space.
\end{remark}

We will talk about exact sequences of sheaves in $\Qcoh_\Lambda(X)$ or $\FP_\Lambda(X)$, meaning that the same sequence is exact when viewed in $\Mod(\cO_\Lambda^\log)$.

Let us consider pullback and pushforward along the morphism of ringed spaces $\tau\colon (X_\log,\cO_\Lambda^\log) \to X$. We will omit the sheaf of weights $\Lambda$ from the notation of those functors, since there will be no risk of confusion.

We can define pullback $\tau^*\colon \Mod(\cO_X) \to \Mod(\cO_\Lambda^\log)$ and pushforward $\tau_*\colon \Mod(\cO_\Lambda^\log)\to \Mod(\cO_X)$ as usual. Since $\tau^*\cO_X\cong \cO_\Lambda^\log$ and $\tau^*$ commutes with colimits and is right exact, it is also clear that the pullback functor will restrict nicely to the subcategories of quasi-coherent and finitely presented sheaves, inducing functors $\tau^*\colon \Qcoh(X)\to \Qcoh_\Lambda(X)$ and $\tau^*\colon \FP(X)\to \FP_\Lambda(X)$. It is less clear that the pushforward will behave well. This is what the rest of this section will be about.

The main point here will be showing that the functor $\tau_*\colon \Qcoh_\Lambda(X)\to \Mod(\cO_X)$ is exact. Note that by standard arguments we can define a derived pushforward functor $R\tau_*\colon D^+(\Mod(\cO_\Lambda^\log))\to D^+(\Mod(\cO_X))$.

\begin{proposition}\label{prop:push.exact}
The functor $\tau_*\colon \Qcoh_\Lambda(X)\to \Mod(\cO_X)$ is exact.
\end{proposition}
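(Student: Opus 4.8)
The plan is to exploit that $\tau_*$ is a right adjoint, hence automatically left exact, so the only issue is right exactness; concretely, it suffices to prove that $R^1\tau_* F = 0$ for every $F \in \Qcoh_\Lambda(X)$ and then read off the surjectivity of $\tau_*$ on short exact sequences from the long exact sequence of the derived pushforward $R\tau_*$. Both the statement and the sheaves involved are local on $X$, so I would first pass to an open where a Kato chart $X \to \bC(P)$ exists, and use the local description of (\ref{sec:local.description}) (the covering $\wti{X}_\log \to X_\log$ with deck group $\bZ(P) \cong \bZ^r$). Since $\tau$ is proper with compact fibres $(X_\log)_x \cong \Hom(\overline{M}_x^\gp, \bS^1) \cong (\bS^1)^r$, topological proper base change identifies the stalk $(R^i\tau_* F)_x$ with the fibre cohomology $H^i((X_\log)_x, F|_{(X_\log)_x})$. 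Because these fibres are compact manifolds of finite cohomological dimension, sheaf cohomology commutes with filtered colimits and restriction to a fibre commutes with colimits; as every quasi-coherent $F$ is locally a filtered colimit of finitely presented sheaves, this reduces the whole statement to showing $H^{>0}((X_\log)_x, F|_{(X_\log)_x}) = 0$ for a \emph{finitely presented} $F$.

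Next I would analyse the fibre. By Example \ref{examp:sheaves.std.log.pt} the restrictions $L_\lambda^\Lambda|_{(X_\log)_x}$ are locally constant sheaves, and a morphism of locally constant sheaves on the connected space $(\bS^1)^r$ is again locally constant; hence, for a finitely presented $F$ presented as the cokernel of a map between finite sums of the $L_\lambda^\Lambda$, the restriction $V := F|_{(X_\log)_x}$ is locally constant and carries the structure of a module over the local system $\cO_X^\log|_{(X_\log)_x}$. Identifying $(\bS^1)^r$ with $\bR^r/\bZ(P)$ via the universal cover, the cohomology of such a local system is the group cohomology $H^*(\bZ(P), V)$ of the stalk with its monodromy action, computed by the Koszul complex of the commuting deck transformations $g_1, \dots, g_r$. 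I would then decompose $V$ as a $\bZ(P)$-module into generalized eigenspaces for the commuting $g_i$; by the explicit action in Example \ref{examp:sheaves.std.log.pt} the eigenvalue of $g_i$ is $e^{2\pi i (g_i)_\bR(\lambda' - \lambda)}$ times a unipotent (logarithmic) part, and this decomposition is functorial, hence compatible with the presentation of $F$. On any summand for which some $g_i$ has semisimple eigenvalue $\neq 1$, the operator $g_i - 1$ is an invertible scalar plus a locally nilpotent part, hence invertible, so that Koszul factor is acyclic and the entire group cohomology of that summand vanishes.

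It remains to treat the summand $V^{0}$ on which all $g_i$ act unipotently, and this is the main obstacle. Here the point is precisely the presence of $\cO_X^\log$: writing $n_i = g_i - 1$, the commutation of the monodromy with multiplication by the logarithmic variables $T_j$ (see Example \ref{examp:sheaves.stdd.log.pt}) yields a relation of the shape $[n_i, T_j] = -2\pi i\, \delta_{ij}\, g_i$, so that on the associated graded of the (locally finite) $n_i$-filtration the operators $n_i$ and $T_i$ satisfy a Weyl relation $[n_i, T_i] = -2\pi i$. This has two consequences: no nonzero finite-dimensional, in particular no trivial, $\bZ(P)$-subquotient can occur in $V^0$, and each $n_i$ is surjective with a contracting homotopy given by ``integration'' against $T_i$ --- exactly the computation that proves $\tau_*\cO_X^\log \cong \cO_X$ with vanishing higher pushforward in Proposition \ref{lemma:push.structure}. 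Consequently the Koszul complex on $n_1, \dots, n_r$ is acyclic in positive degrees on $V^0$, giving $H^{>0}(\bZ(P), V^0) = 0$ and hence $H^{>0}((X_\log)_x, V) = 0$. The delicate part of writing this out is establishing that every finitely presented module over $\cO_X^\log|_{(X_\log)_x}$ with unipotent monodromy is acyclic for the $n_i$ --- i.e.\ that the logarithmic structure rules out the ``constant'' subquotients that would otherwise produce cohomology --- and verifying that the eigenspace decomposition is genuinely preserved by the cokernel, so that the vanishing on each summand assembles to the vanishing for $V$. Together with the automatic left exactness, this yields the exactness of $\tau_*$ on $\Qcoh_\Lambda(X)$, and also explains why tensoring with $\cO_X^\log$ was necessary (Remark \ref{rmk:exact}).
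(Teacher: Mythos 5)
Your reduction is the same as the paper's: left exactness is automatic, $R\tau_*$ commutes with filtered colimits because $\tau$ is proper, and proper base change reduces the claim to the vanishing of $H^{>0}$ of the fibre $(X_\log)_x\cong(\bS^1)^r$ with coefficients in the restriction of a finitely presented $F$, computed as group cohomology over $\bZ(P)$. The gap lies in how you obtain that vanishing. For $r\geq 2$, the inference from ``each $n_i=g_i-1$ is surjective'' to ``the Koszul complex on $n_1,\dots,n_r$ is acyclic in positive degrees'' is invalid (already two copies of the same surjective, non-injective operator give $H^1\cong\ker$ of that operator, which is nonzero); what is actually needed is that $n_l$ remain surjective on the iterated invariants $\ker(n_{l+1})\cap\cdots\cap\ker(n_r)$, and for the cokernel module $V$ you have no handle on those invariants. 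This is precisely what the paper's Lemma \ref{lemma:loc.free.vanishing.coho} establishes, but only for the line bundles $L_\lambda^\Lambda$, where the explicit model $A[T_1,\dots,T_r]$ identifies the iterated invariants as $A[T_1,\dots,T_l]$ and the finite-difference operator $g_l-1$ is visibly surjective there. Relatedly, your assertion that the unipotent part $V^0$ admits no nonzero trivial $\bZ(P)$-subquotient is false: the constants $A\subseteq A[T_1,\dots,T_r]$ form a trivial submodule, and $H^0$ had better be nonzero, since it computes the stalks of the pushforwards, i.e.\ the pieces of the associated parabolic sheaf.

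The paper sidesteps the module structure of the cokernel entirely: after Lemma \ref{lemma:loc.free.vanishing.coho} it writes $0\to F_1\to\bigoplus_i L_{\lambda_i}^\Lambda\to F\to 0$, deduces $H^i(F)\cong H^{i+1}(F_1)$ for $i>0$, and uses Lemma \ref{lemma:surjection} (the kernel of any map between direct sums of the $L_\lambda^\Lambda$ again receives a surjection from such a direct sum) to iterate the degree shift until the degree exceeds $r$, where cohomology vanishes because $(\bS^1)^r$ is an $r$-dimensional manifold. Your sketch never invokes this finite cohomological dimension, so all the weight falls on the unproved Koszul acyclicity. A direct route closer to yours does exist: over the log point, (\ref{eq:homs}) shows that $\Hom(L_{\lambda_j}^\Lambda,L_{\lambda_i}^\Lambda)$ is $0$ or $A$, so the presenting matrix has entries in $A$ and $V\cong\mathrm{coker}(A^{J}\to A^{I})\otimes_A B$ with $B$ free over $A$; one could then rerun the induction of Lemma \ref{lemma:loc.free.vanishing.coho} with coefficients in the $A$-module $\mathrm{coker}(A^{J}\to A^{I})$ in place of $A$. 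But that argument is not contained in what you wrote, and as it stands the key vanishing is not established.
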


\begin{proof}
As usual the pushforward is left exact. We will show that for every quasi-coherent sheaf $F\in \Qcoh_\Lambda(X)$ we have $R^1\tau_*F=0$ (where the derived functor is computed in $\Mod(\cO_\Lambda^\log)$), and this will imply the exactness. Since $\tau$ is proper and the spaces are locally compact, $R\tau_*$ commutes with filtered colimits (because it coincides with $R\tau_!$ and therefore is a left adjoint - see for example \cite[Section 3.1]{kashiwara-shapira}), and hence we can assume that $F$ is finitely presented.

In order to show $R^1\tau_*F=0$, let us fix a point $x\in X$ and check that the stalk $(R^1\tau_*F)_x$ is zero. By proper base change (as formulated for example in \cite[Appendix A2]{kato-usui}) via the cartesian diagram
$$
\xymatrix{
(X_\log)_x\cong (x)_\log\ar[r]^>>>>>i\ar[d] & X_\log\ar[d]\\
x\ar[r] & X
}
$$
we have an isomorphism $(R^1\tau_*F)_x=H^1((x)_\log, i^{-1}F)$ (notice the $i^{-1}$, instead of $i^*$). We can therefore assume that $X$ is the log locally ringed space given by the point $\Spec \bC$, but equipped with the local ring $A=\cO_{X,x}$ instead of $\bC$, and with log structure $\beta\colon A^\times \oplus P\to A$ where $P\to A$ is obtained from a chart of the log structure around $x$, with $P=\overline{M}_x$. The space $(x)_\log$ in this case is the Kato-Nakayama space of the log point $(\Spec \bC, P)$, but the corresponding sheaves living on it are also tensored with the local ring $A$, which acts as ``ring of coefficients'' in place of the base field $\bC$.

We will need the following lemma.

\begin{lemma} \label{lemma:loc.free.vanishing.coho}
Let $X$ be the point $\Spec \bC$, equipped with a local ring $A$ and a log structure $\beta\colon A^\times\oplus P\to A$ where $P$ is a toric monoid. Then for every $\lambda \in \Lambda^\gp$ and $m>0$, we have
$H^m(X_\log,L_\lambda^\Lambda)=0$.
\end{lemma}

\begin{proof}
This was proven in \cite[Proposition 3.7]{illusie-kato-nakayama}, \cite[Proposition 4.6]{matsubara} and \cite[Proposition 2.2.10]{kato-usui} in the case where $X$ is a fine saturated log analytic space, $\Lambda=\overline{M}$ and $\lambda=0$, so that there are no ``added roots'', and $L_\lambda^\Lambda=\cO_X^\log$. 

The proof we give is along the same lines of the one of \cite[Proposition 2.2.10]{kato-usui}. Call $p_i\in P$ elements that give a $\bZ$-basis of $P^\gp$. Recall the description of Example \ref{examp:sheaves.std.log.pt}: we can write
$$
L_\lambda^\Lambda \cong t^{-\lambda}\cdot  A[\log(p_i), \prod_j q_j^{\alpha_{j}}]
$$
where both $\log(p_i)$ and $\prod_j q_j^{\alpha_{j}}$ denote local sections. Here the $\prod _j q_j^{\alpha_{j}}$ appearing are exactly the ones for which $\sum_j q_j\otimes \alpha_{j}\in \Lambda^+\setminus \langle P^+\rangle \subseteq P_\bR$, and if $\sum_j q_j\otimes \alpha_j+ \sum_k q_k \otimes {\alpha_{k}}=p+\sum_l q_l\otimes {\alpha_{l}}$ with $p\in P$ then the product $\prod_j q_j^{\alpha_{j}}\cdot \prod_k q_k^{\alpha_{k}}$ is equal to  $f_p \cdot \prod_l q_l^{\alpha_{l}}$ (where $f_p\in A$ is the image of $p$ via $\beta$). Since this sheaf is locally constant on $X_\log$, we have
$$
H^m(X_\log, L_\lambda^\Lambda)=H^m(\bZ(P), (L_\lambda^\Lambda)_y)
$$
where $y$ is a fixed point of $X_\log$, and the right term is group cohomology, with respect to the action of the fundamental group $\bZ(P)=\Hom(P,\bZ)$ of $X_\log$.

Now $(L_\lambda^\Lambda)_y$, as an abelian group, is a direct sum of copies of
$$
t^{-\lambda}\cdot  \prod_j q_j^{\alpha_{j}}\cdot A[\log(p_1),\hdots, \log(p_r)]\cong  A[T_1,\hdots, T_r]
$$
where the action of the generator $g_a\in \bZ(P)$ dual to the element $p_a\in P$ is given by $$g_a(T_{b})=e^{2\pi i (g_a)_\bR(\sum_j q_j\otimes \alpha_{j}-\lambda)} (T_{b}-2\pi i\delta_{a{b}})$$ where $\delta_{ab}$ is the Kronecker delta. Consequently, $H^m(\bZ(P), (L_\lambda^\Lambda)_y)$ is the direct sum of the cohomologies of these subgroups, and it suffices to show that these are all zero for $m>0$.

For $0\leq l\leq r$, let $\Gamma_l$ be the subgroup of $\bZ(P)$ generated by the elements $g_k$ for $l<k\leq r$, and let $R_l\subseteq A[T_1,\hdots, T_r]$ be the submodule $A[T_1,\hdots, T_l]$. We prove by descending induction on $l$ that $R\Gamma(\Gamma_l, A[T_1,\hdots, T_r])$ is either equal to $R_l$ in degree $0$ (with the induced action of the quotient group $\bZ(P)/\Gamma_l=\langle g_1,\hdots, g_l\rangle$) or to the zero complex. For $l=0$ this will give that $R\Gamma(\bZ(P), A[T_1,\hdots, T_r])$ is concentrated in degree zero, which finishes the proof.

For $j=r$ the statement is obvious, since $\Gamma_r$ is the trivial group. Now for the inductive step, assume that $R\Gamma(\Gamma_l, A[T_1,\hdots, T_r])=R_l$ or $0$. Then $$R\Gamma(\Gamma_{l-1},A[T_1,\hdots, T_r])=R\Gamma(\Gamma_{l-1}/\Gamma_{l}, R\Gamma(\Gamma_l, A[T_1,\hdots, T_r]))$$ so if $R\Gamma(\Gamma_l, A[T_1,\hdots, T_r])=0$ we are done. Otherwise, by inductive assumption and the fact that the quotient $\Gamma_{l-1}/\Gamma_{l}$ is cyclic generated by $g_l$, the right-hand side coincides with the complex $(g_l-1)\colon R_l\to R_l$. It is not hard to check that this map is surjective, and its kernel is either $R_{l-1}$ if $(g_l)_\bR(\sum_j q_j\otimes \alpha_{j}-\lambda)\in \bZ$, or otherwise $0$, as required.
\end{proof}

Back to the proof of Proposition \ref{prop:push.exact}, assuming that $X$ is the point $\Spec \bC$ equipped with the local ring $A=\cO_{X,x}$ and log structure $\beta\colon A^\times\oplus P\to A$, we have to check that if $F$ is a finitely presented sheaf of $\cO_\Lambda^\log$-modules on $X_\log$, then $H^1(X_\log, F)=0$. Since $F$ is finitely presented, we have a presentation
$$
\xymatrix{
\bigoplus_j L_{\lambda_j}^\Lambda \ar[r] & \bigoplus_i L_{\lambda_i}^\Lambda \ar[r]^f & F\ar[r] & 0,
}
$$
with finitely many summands on the whole $X_\log$ (the only non-empty open subset of $\Spec \bC$ is the whole space), and by Lemma \ref{lemma:loc.free.vanishing.coho} we have $H^i(X_\log,L_\lambda^\Lambda)=0$ for every $\lambda$ and $i>0$.

Let us consider the kernel $F_1$ of the map $f$ in the presentation above, fitting in a short exact sequence
$$
\xymatrix{
0\ar[r] & F_1  \ar[r] & \bigoplus_i L_{\lambda_i}^\Lambda \ar[r]^f & F\ar[r] & 0,
}
$$
and the induced long exact sequence in cohomology. Using the fact that $H^i(X_\log,L_\lambda^\Lambda)=0$ for $i>0$ and any $\lambda$, we see that $H^i(X_\log, F)\cong H^{i+1}(X_\log, F_1)$ for $i>0$. Now we claim that $F_1$ also has a presentation of the form
$$
\xymatrix{
 \bigoplus_k L_{\lambda_k}^\Lambda  \ar[r] & \bigoplus_j L_{\lambda_j}^\Lambda \ar[r] & F_1\ar[r] & 0,
}
$$
(where $\bigoplus_k L_{\lambda_k}^\Lambda$ might have infinitely many summands). This will allow us to iterate this process. Our objective therefore is now to prove the following lemma.

\begin{lemma}\label{lemma:surjection}
In the situation described above, the kernel of any morphism of sheaves $\bigoplus_j L_{\lambda_j}^\Lambda \to \bigoplus_i L_{\lambda_i}^\Lambda$ on $X_\log$ admits a surjection from some $\bigoplus_k L_{\lambda_k}^\Lambda$ (the index sets for $i,j$ and $k$ need not be finite).
\end{lemma}

\begin{proof}
Let us preliminarily note that 
\begin{equation}\label{eq:homs}
\Hom(L_\lambda^\Lambda,L_\mu^\Lambda)=\Hom(\cO_\Lambda^\log, L_{\mu-\lambda}^\Lambda)=\Gamma( L_{\mu-\lambda}^\Lambda)
\end{equation}
where $\Hom$ denotes the group of homomorphisms of sheaves of $\cO_\Lambda^\log$-modules on $X_\log$. Then, from the description in Section \ref{sec:local.description}, we see that $\Gamma( L_{\mu-\lambda}^\Lambda)$ is trivial unless the difference $\mu-\lambda\in \overline{M}^\gp=P^\gp\subseteq \Lambda^\gp$ (so that $g_\bR(\mu-\lambda)\in \bZ$ for every $g\in \bZ(P)=\Hom(P,\bZ)$). In this case, we have $\Gamma( L_{\mu-\lambda}^\Lambda)\cong \Gamma(X,L_{\mu-\lambda})\cong A$ (see Proposition \ref{lemma:push.structure} below for a generalization of this fact), and the homomorphism $L_{\lambda}^\Lambda\to L_{\mu}^\Lambda$ corresponding to $a\in A$ is described on local sections of $L_{\lambda}^\Lambda$ by $t^{-\lambda}\mapsto at^{-\mu}$.
Therefore, the map $\bigoplus_j L_{\lambda_j}^\Lambda \to \bigoplus_i L_{\lambda_i}^\Lambda$ is determined by a ``matrix'' $(a_{ij})$ of elements of $A$, and sends the local section $t^{-\lambda_j}$ to $\sum_i a_{ij} t^{-\lambda_i}$. Of course, even if the index sets are infinite, for every $j$ there is only a finite number of indices $i$ for which $a_{ij}\neq 0$.

To prove the statement we pass to the universal cover $\wti{X}_\log=\Hom(P^\gp, \bR)\to X_\log=\Hom(P^\gp,\bS^1)$. On $\wti{X}_\log$ the sheaves $\wti{L}_\lambda^\Lambda$ are constant sheaves. Recall the description of Example \ref{examp:sheaves.std.log.pt}: set
\begin{equation}\label{eq:B}
B=A[T_1,\hdots, T_r, S^{\lambda'} \mid \lambda'\in \Lambda^+]/(S^{\lambda'+\lambda''}-S^{\lambda'}\cdot S^{\lambda''} \,, \, S^p-f_p) \mid \lambda', \lambda''\in \Lambda^+, p\in P^+).
\end{equation}
where $f_p\in A$ is the image of $p$ via $\beta$.

Then $\wti{\cO}_\Lambda^\log$ is the constant sheaf $\underline{B}_{\wti{X}_\log}$, and the sheaf $\wti{L}_\lambda^\Lambda$ is the constant sheaf associated with the $B$-module $t^{-\lambda}\cdot B$ (recall that $t^{-\lambda}$ is a formal symbol, that keeps track of the $\bZ(P)$-equivariant structure). The pullback to $\wti{X}_\log$ of the map $\bigoplus_j L_{\lambda_j}^\Lambda \to \bigoplus_i L_{\lambda_i}^\Lambda$ that we are considering is then completely determined by a homomorphism of $B$-modules $\phi\colon\bigoplus_j t^{-\lambda_j}\cdot B \to \bigoplus_i t^{-\lambda_i}\cdot B$, that sends $t^{-\lambda_j}$ to $\sum_i a_{ij} t^{-\lambda_i}$ with $a_{ij}\in A$.

Let $K$ be the kernel of $\phi$. Then the kernel $\cK$ of the map of sheaves $\bigoplus_j \wti{L}_{\lambda_j}^\Lambda \to \bigoplus_i \wti{L}_{\lambda_i}^\Lambda$ is the constant sheaf on $\wti{X}_\log$ associated with the $B$-module $K$. Let us fix a point $y\in \wti{X}_\log$. We will prove that every element of the stalk $\cK_y\cong K$ is in the image of a morphism of sheaves $\bigoplus_k \wti{L}_{\lambda_k}^\Lambda\to\bigoplus_j \wti{L}_{\lambda_j}^\Lambda$ on the whole $\wti{X}_\log$ of the form $t^{-\lambda_k}\mapsto \sum_j c_{jk} t^{-\lambda_j}$ with $c_{jk}\in A$, that moreover lands entirely in $\cK$. The fact that $c_{jk}\in A$ assures that this morphism of sheaves will descend to a morphism $\bigoplus_k L_{\lambda_k}^\Lambda\to \bigoplus_j{L}_{\lambda_j}^\Lambda$ on $X_\log$. This will be enough to conclude the proof of the lemma, by taking a big direct sum indexed by all elements of the stalks of the sheaf $\cK$.

Let us fix an element $k=\sum_j b_j t^{-\lambda_j}$ in the stalk of the kernel $\cK_y \subseteq \bigoplus_j (\wti{L}_{\lambda_j}^\Lambda)_y\cong \bigoplus_j t^{-\lambda_j} \cdot B$ (so that $b_j\in B$). First, note that thanks to (\ref{eq:homs}) we can group together the terms of $k$ such that ${\lambda_j}\equiv\lambda_{j'} \; \mod \;  P^\gp$. Every resulting partial sum of terms will still be in $\cK_y$, so we can assume that ${\lambda_j}\equiv\lambda_{j'} \; \mod\; P^\gp$ for every $\lambda_j$ and $\lambda_{j'}$ appearing with non-zero coefficient in $k$.

The fact that $k$ is in the kernel of $\phi$ implies that
$$
\phi\left(\sum_j b_jt^{-\lambda_j}\right)=\sum_j b_j \left(\sum_{i} a_{ij} t^{-\lambda_i}\right)=\sum_i \left(\sum_j a_{ij}b_j\right)t^{-\lambda_i}=0
$$
in $\bigoplus_i t^{-\lambda_i} \cdot B$. Therefore for every $i$ we have $\sum_j a_{ij}b_j=0$ in $B$. Now using the description of $B$ given by (\ref{eq:B}), let us write $$b_j=\sum_{\underline{a},\mu}\gamma_{j,\underline{a},\mu} T^{\underline{a}} S^\mu \, ,$$ where $\underline{a}$ denotes a vector $(a_1,\hdots, a_r)$ of $r$ non-negative integers, $\mu\in \Lambda \setminus \langle P^+\rangle $, $T^{\underline{a}}=T_1^{a_1}\cdots T_r^{a_r}$ and $\gamma_{j,\underline{a},\mu}\in A$. The equation $\sum_j a_{ij}b_j=0$ gives then $\sum_{j,\underline{a},\mu} a_{ij}\gamma_{j,\underline{a},\mu} T^{\underline{a}} S^\mu=0$ for every $i$. This in turn implies that $\sum_j a_{ij}\gamma_{j,\underline{a},\mu}=0$ for every $i, \underline{a}$ and $\mu$.

This shows that for every fixed $\underline{a}, \mu$, the element $\sum_j \gamma_{j,\underline{a},\mu} t^{-\lambda_j}$ of $(\bigoplus_j \wti{L}_{\lambda_j}^\Lambda)_y$ is actually in $\cK_y$. Choose any $\lambda\in \Lambda^\gp$ such that $\lambda\equiv \text{ some } \lambda_j \;\mod \; P^\gp$. We can then define a morphism of sheaves $\wti{L}_{\lambda}^\Lambda \to \bigoplus_j \wti{L}_{\lambda_j}^\Lambda$ by sending $t^{-\lambda}$ to $\sum_j \gamma_{j,\underline{a},\mu} t^{-\lambda_j}$. This lands entirely in $\cK$, and at $y$ the element
$$k_{\underline{a},\mu}=\sum_j \gamma_{j,\underline{a},\mu}T^{\underline{a}}S^\mu t^{-\lambda_j}$$
is in the image of this map. Now observe that $k=\sum_{\underline{a},\mu} k_{\underline{a},\mu}$. This finally implies that the element $k$ is in the image of the map $\bigoplus_{\underline{a},\mu}\wti{L}_{\lambda}^\Lambda\to \cK\subseteq  \bigoplus_j \wti{L}_{\lambda_j}^\Lambda$ defined by $t^{-\lambda}_{\underline{a},\mu}\mapsto \sum_j \gamma_{j,\underline{a},\mu} t^{-\lambda_j}$. This concludes the proof of the lemma.
\end{proof}

Back to sheaf cohomology, we can now iterate the argument outlined before the last lemma, and obtain a chain of isomorphisms $$H^i(X_\log,F)\cong H^{i+1}(X_\log, F_1)\cong H^{i+2}(X_\log,F_2)\cong \cdots \cong H^{i+k}(X_\log, F_k)$$ for $i>0$. For $i=1$, as soon as $1+k>r$ (where $r$ is the rank of $P^\gp$), the last cohomology group has to vanish (as $X_\log \cong (\bS^1)^r$ is a manifold of dimension $r$), and hence we get $H^1(X_\log, F)=0$, as we wanted to prove.
\end{proof}

\begin{remark}\label{rmk:exact}
The previous proposition is the reason why it is important to bring the sheaf $\cO_X^\log$ into the picture: without tensoring the other sheaves by it, this proposition does not hold.

For example, consider the standard log point $(\Spec \bC,\bN)$, with Kato-Nakayama space $\tau\colon \bS^1\to \Spec \bC$. The structure sheaf downstairs is $\bC$, and its pullback is the constant sheaf $\underline{\bC}_{\bS^1}$. Clearly $$R^1\tau_*\tau^{-1}\cO_{\Spec \bC}=R^1\tau_*\underline{\bC}_{\bS^1}=H^1(\bS^1,\bC)\neq 0$$ in this case. On the other hand we do have $R^1\tau_*(\tau^{-1}\bC\otimes_{\tau^{-1}\bC} \cO_{\Spec \bC}^\log)=0$.

The heuristic here is that the non-trivial geometry that is introduced by the Kato-Nakayama construction obstructs the exactness of $\tau_*$, and tensoring with the sheaf $\cO_X^\log$ (which has sections that interact with this geometry) balances this out.

Without exactness, it might still be that part of the arguments go through, but for example it is not clear that the equivalence between parabolic sheaves and sheaves on $X_\log$ would respect exactness, something which is certainly desirable.
\end{remark}

Now we can deduce that $\tau_*$ respects quasi-coherence and finite presentation. We will need the following proposition.

\begin{proposition}\label{lemma:push.structure}
The natural morphism $\cO_X\to \tau_*\cO_\Lambda^\log$ is an isomorphism, and for every $\lambda\in \Lambda^\gp(X)$ the sheaf $\tau_*L_\lambda^\Lambda$ is finitely presented.
\end{proposition}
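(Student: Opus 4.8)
The plan is to reduce both statements to a computation on the fibers of $\tau$. Each is local on $X$ and can be checked on stalks, so I fix a point $x\in X$ and set $A=\cO_{X,x}$, $P=\overline{M}_x$. Exactly as in the proof of Proposition \ref{prop:push.exact}, proper base change along the inclusion of the fiber $(X_\log)_x\cong (x)_\log$ identifies the stalk $(\tau_*L_\lambda^\Lambda)_x$ with $H^0((x)_\log, i^{-1}L_\lambda^\Lambda)$; since $L_\lambda^\Lambda$ is locally constant on $(x)_\log\cong(\bS^1)^r$ this is the invariant group $H^0(\bZ(P),(L_\lambda^\Lambda)_y)=\big((L_\lambda^\Lambda)_y\big)^{\bZ(P)}$ for the action of $\bZ(P)=\Hom(P,\bZ)$ described explicitly in Example \ref{examp:sheaves.std.log.pt}. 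So the whole problem becomes the computation of these invariants.

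First I would carry out this invariant computation. Writing a general element of the fiber module as a finite sum $\sum_{\underline{a},\lambda'}\gamma_{\underline{a},\lambda'}\,t^{-\lambda}T^{\underline{a}}S^{\lambda'}$ with $\gamma_{\underline{a},\lambda'}\in A$, the generator $g_i$ dual to $p_i$ acts by the phase $e^{2\pi i (g_i)_\bR(\lambda'-\lambda)}$ together with the shift $T_i\mapsto T_i-2\pi i$. Grouping by $\lambda'$ and comparing top total degree in the $T$'s (the shift only affects strictly lower degree terms), invariance forces the phase $e^{2\pi i (g_i)_\bR(\lambda'-\lambda)}$ to equal $1$ for every $i$, i.e. $\lambda'-\lambda\in P^\gp$, and then, all phases being trivial, the surviving polynomial in the $T$'s to be constant, i.e. $\underline{a}=0$. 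Hence the invariants are freely spanned over $A$ by the monomials $t^{-\lambda}S^{\lambda'}$ with $\lambda'\equiv\lambda\pmod{P^\gp}$. For $\lambda=0$ the only such $\lambda'$ in the index set $(\Lambda^+\setminus\langle P^+\rangle)\cup\{0\}$ is $\lambda'=0$ (any nonzero $\lambda'\in P^\gp\cap\Lambda=P$ lies in $\langle P^+\rangle$), so the invariants are exactly the constants $A$; this says the unit $\cO_{X,x}\to(\tau_*\cO_\Lambda^\log)_x$ is an isomorphism, which proves the first assertion.

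For general $\lambda$ the stalk is the free $A$-module on the set $W_\lambda=\{\lambda'\in(\Lambda^+\setminus\langle P^+\rangle)\cup\{0\}\mid \lambda'\equiv\lambda\pmod{P^\gp}\}$, and the key finiteness input is that $W_\lambda$ is finite. I would prove this geometrically. By $\overline{M}^\gp$-saturation the coset $\{\lambda'\in\Lambda\mid\lambda'\equiv\lambda\}$ is the set of lattice points of a translate of the cone $P_\bR$, stable under translation by $P$; its $\leq_P$-minimal elements are precisely the $\lambda'\notin\langle P^+\rangle$, and, again using saturation, these are exactly the lattice points of $P_\bR\setminus\bigcup_{g}(g+P_\bR)$, the union being over a finite generating set $\{g\}$ of $P$. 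Choosing a linear functional $\ell$ strictly positive on $P\setminus\{0\}$ (possible since $P$ is sharp), any $v$ in this complement has $v=\sum c_ig_i$ with all $c_i<1$ (otherwise $v-g_i\in P_\bR$), so $\ell(v)<\sum_g\ell(g)$; since a pointed cone meets such a slab $\{0\le\ell\le C\}$ in a bounded set, the complement is bounded and $W_\lambda$ is finite. This shows the stalks of $\tau_*L_\lambda^\Lambda$ are finite free over $\cO_{X,x}$.

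The remaining, and main, difficulty is to upgrade this pointwise finiteness to finite presentation of $\tau_*L_\lambda^\Lambda$ as a sheaf, because the rank $|W_\lambda|$ can jump between strata of $X$, so the pushforward is not locally free in general. When $\lambda\in\overline{M}^\gp$ this is immediate: $L_\lambda^\Lambda\cong\tau^{-1}L_\lambda\otimes_{\tau^{-1}\cO_X}\cO_\Lambda^\log$, so the projection formula together with $\tau_*\cO_\Lambda^\log\cong\cO_X$ gives $\tau_*L_\lambda^\Lambda\cong L_\lambda$, a line bundle. For general $\lambda$ I would proceed by d\'evissage: the adjoined sections $S^\mu$ are nilpotent (Example \ref{examp:sheaves.std.log.pt}), so $L_\lambda^\Lambda$ carries a finite filtration whose graded pieces are, up to tensoring with $\cO_X^\log$, pullbacks of line bundles of the form $L_{\lambda'-\lambda}$ with $\lambda'-\lambda\in\overline{M}^\gp$; applying the exact functor $\tau_*$ (Proposition \ref{prop:push.exact}) and the previous case then exhibits $\tau_*L_\lambda^\Lambda$ as a finite iterated extension of line bundles on $X$, hence finitely presented. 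The delicate point, where I expect to spend the most care, is making this filtration and the induced generators $t^{-\lambda}S^{\lambda'}$ ($\lambda'\in W_\lambda$) globally coherent across the stratification of $X$ — equivalently, checking that the finitely many local sections $t^{-\lambda}S^{\lambda'}$ generate $\tau_*L_\lambda^\Lambda$ near $x$ subject to only finitely many relations, which should follow from the explicit multiplication rules $S^{\lambda'}S^{\lambda''}=f_pS^{\mu}$ of Example \ref{examp:sheaves.std.log.pt} and the description in Remark \ref{rmk:sheaves.description}.
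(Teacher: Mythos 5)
Your reduction to the fibers via proper base change and your computation of the $\bZ(P)$-invariants of $(L_\lambda^\Lambda)_y$ are correct and essentially reproduce the stalk verification in the paper's proof (which writes a section as $\sum\gamma_{\underline a,\mu}T^{\underline a}S^\mu t^{-\lambda}$ and forces $\underline a=0$ and $\mu\equiv\lambda\bmod P^\gp$ in exactly the same way); your cone argument for the finiteness of $W_\lambda$ is a welcome expansion of what the paper only asserts ``because $P$ is finitely generated''. The use of the projection formula for $\lambda\in\overline M^\gp$ is not circular, since that formula only needs the first assertion, which you prove beforehand.

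However, the last step is a genuine gap, and it is precisely the step that carries the content of ``finitely presented''. Knowing that every stalk of $\tau_*L_\lambda^\Lambda$ is finite free does not give finite presentation of the sheaf, and the d\'evissage you propose does not work as stated: the ideal of $\cO_\Lambda^\log$ generated by the $S^\mu$ is \emph{not} nilpotent (only each individual $S^\mu$ is, with order of nilpotence unbounded as $\mu\to 0$, e.g.\ for $\Lambda=\bR_+$), and the index set $\Lambda^+\setminus\langle P^+\rangle$ is typically uncountable, so the associated filtration of $L_\lambda^\Lambda$ is neither finite nor has graded pieces that are pullbacks of finitely many line bundles; the fact that only the finitely many weights in $W_\lambda$ survive is only visible \emph{after} pushing forward, which is what you are trying to establish. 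What is missing is a globally defined coherent candidate together with a comparison map. The paper supplies this by working on a chart $(P\to\overline M(X),\Lambda_0\to\Lambda(X))$ and forming $\varinjlim_{P^\gp\ni p\le\lambda}L_p$, a finite colimit of coherent sheaves on $X$ whose transition maps are multiplication by the sections $s_q$ (this is exactly how the rank is allowed to jump across strata); the natural maps $\tau^*L_p\to L_\lambda^{\Lambda_0}$ give, by adjunction, a map $\varinjlim_{p\le\lambda}L_p\to\tau_*L_\lambda^{\Lambda_0}$, which is then checked to be a stalkwise isomorphism by the very invariants computation you have already carried out. You have the local half of the argument; you are missing this global model and its comparison map.
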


\begin{proof}
The first assertion was proven in \cite[Proposition 3.7]{illusie-kato-nakayama} in the case $\Lambda=\overline{M}$. In the general case the proof is similar.

For the second point, we can localize where $X$ and $\Lambda$ have charts $P\to \overline{M}(X)$ and $\Lambda_0\to \Lambda(X)$ with $P\subseteq \Lambda_0 \subseteq P_\bR$. We claim that for every $\lambda\in \Lambda_0^\gp$ there is an isomorphism
$$
\tau_*L_\lambda^{\Lambda_0}\cong \varinjlim_{P^\gp \, \ni \, p\leq \lambda}L_p
$$
where the map $L_p\to L_{p'}$ for $p\leq p'$, i.e. $p'=p+q$ for some $q\in P$ and $L_{p'}\cong L_p\otimes_{\cO_X} L_{q}$, is given by multiplication by the section $s_{q}\in \Gamma(L_{q})$. After we prove this, it is sufficient to note (because $P$ is finitely generated) that this is a finite colimit of coherent sheaves, hence coherent itself.

To prove the claim, note that there is a natural map $\tau^*L_p\to L_\lambda^{\Lambda_0}$ for every $p\leq \lambda$, that by adjunction gives $L_p\to \tau_*L_\lambda^{\Lambda_0}$. Taking the colimit we obtain a map $\varinjlim_{P^\gp \, \ni \, p\leq \lambda}L_p\to \tau_*L_\lambda^{\Lambda_0}$. We can check that this is an isomorphism on the stalks. Let us denote by  $p_1,\hdots, p_k\in P^\gp$ the elements of $P^\gp$ that are maximal among those such that $p\leq \lambda$, so that $\varinjlim_{P^\gp \, \ni \, p\leq \lambda}L_p$ is a quotient of $L_{p_1}\oplus \cdots\oplus L_{p_k}$.

For a point $x\in X$, set $A=\cO_{X,x}$. Note that the map $(\varinjlim_{P^\gp \, \ni \, p\leq \lambda}L_p)_x\to (\tau_*L_\lambda^{\Lambda_0})_x$ can be identified with the natural map
$$
\varinjlim_{P^\gp \, \ni \, p\leq \lambda}t^{-p}A \to (t^{-\lambda}B)^{\bZ(P)}
$$
where $B$ as in Example \ref{examp:sheaves.std.log.pt} is the ring
$$
A[T_1,\hdots, T_r, S^{\lambda'} \mid \lambda'\in{\Lambda_0}^+]/(S^{\lambda'+\lambda''}-S^{\lambda'}\cdot S^{\lambda''} \,, \, S^p-f_p \mid \lambda', \lambda''\in {\Lambda_0}^+, p\in P^+)
$$
(here $f_p$ is the image of $p$ via $\beta\colon A^\times\oplus P\to A$, the restriction of the log structure of $X$ to the local ring at $x$) and where $^{\bZ(P)}$ denotes taking invariants with respect to the the action of the group $\bZ(P)$ (see Example \ref{examp:sheaves.std.log.pt} for a description of the action). Let us check that this map is an isomorphism.

For injectivity, it suffices to check that every $t^{-{p_i}}A\to (t^{-\lambda}B)^{\bZ(P)}$ is injective, where $p_i\in P^\gp$ is one of the maximal elements with $p\leq \lambda$. In fact, this map is given by $a\cdot t^{-{p_i}}\mapsto aS^{\lambda'}\cdot t^{-\lambda}$, where $\lambda' \in {\Lambda_0}$ is $\lambda-p_i$. In fact, $\lambda'\in {\Lambda_0}\setminus \langle P^+\rangle$ (because of maximality of $p_i$), and therefore $aS^{\lambda'}=0$ implies $a=0$, showing injectivity.

Let us check now that the map is surjective. Take an element $s=\sum_{\underline{a},\mu} \gamma_{\underline{a},\mu} T^{\underline{a}}S^\mu\cdot t^{-\lambda} \in t^{-\lambda}B$, where the notation is as in the proof of Lemma \ref{lemma:surjection}: $\underline{a}=(a_1,\hdots, a_r)$ is a vector of non-negative integers, $T^{\underline{a}}=T_1^{a_1}\cdots T_r^{a_r}$, $\mu\in {\Lambda_0}\setminus \langle P^+\rangle$ and $\gamma_{\underline{a},\mu}\in A$.

By how $\bZ(P)$ acts on the $T_i$, it is clear that for $s$ to be invariant we need to have $\gamma_{\underline{a},\mu}=0$ for $\underline{a}\neq \underline{0}$. Assuming this, the action of $g\in \bZ(P)$ on $s$ is then given by
$$
g\cdot \left(\sum_{\mu} \gamma_{\mu} S^\mu\cdot t^{-\lambda}\right)=\sum_\mu e^{2\pi i g_\bR(\mu-\lambda)}\gamma_\mu S^\mu \cdot t^{-\lambda}
$$
and $s$ is invariant if and only if $g_\bR(\mu-\lambda)\in \bZ$ for every $g\in \bZ(P)$ (i.e. $\mu-\lambda\in P^\gp$) and for every $\mu$ with $\gamma_\mu\neq 0$.

Assuming this is satisfied, for every such $\mu$, let $p_\mu=\lambda-\mu$. Since $\mu\in {\Lambda_0}$ we have $p_\mu\leq \lambda$, and since $\mu\in{\Lambda_0}\setminus \langle P^+\rangle$, it follows that $p_\mu$ is one of the maximal elements $p_i$. Now for this fixed $\mu$ we have that $\gamma_\mu S^\mu \cdot t^{-\lambda}$ is in the image of $\varinjlim_{P^\gp \, \ni \, p\leq \lambda}t^{-p}A \to (t^{-\lambda}B)^{\bZ(P)}$, precisely it is the image of the element $\gamma_\mu\cdot t^{-p_\mu} \in t^{-p_\mu} A$. This immediately implies surjectivity of this map, and concludes the proof.
\end{proof}

\begin{proposition}\label{prop:qc.to.qc}
The functor $\tau_*\colon \Mod(\cO_\Lambda^\log)\to \Mod(\cO_X)$ sends $\Qcoh_\Lambda(X)$ into $\Qcoh(X)$ and $\FP_\Lambda(X)$ into $\FP(X)$.
\end{proposition}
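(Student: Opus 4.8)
The plan is to deduce the statement from the two facts already in hand: the exactness of $\tau_*$ on quasi-coherent sheaves (Proposition \ref{prop:push.exact}), or rather the vanishing $R^1\tau_*=0$ underlying it, and the finite presentation of the pushforwards $\tau_* L_\lambda^\Lambda$ of the basic line bundles (Proposition \ref{lemma:push.structure}). I will settle the finitely presented case first, and then obtain the quasi-coherent case by a filtered colimit argument.

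So let $F\in\FP_\Lambda(X)$. Finite presentation of $\tau_* F$ is local on $X$, so I may assume $F$ has a global finite presentation
$$
\bigoplus_{j=1}^n L_{\lambda_j}^\Lambda \xrightarrow{g} \bigoplus_{i=1}^m L_{\lambda_i}^\Lambda \xrightarrow{f} F\to 0.
$$
Writing $K=\ker f$ (the image of $g$) and $K'=\ker g$, I split this into the short exact sequences $0\to K\to\bigoplus_i L_{\lambda_i}^\Lambda\to F\to 0$ and $0\to K'\to\bigoplus_j L_{\lambda_j}^\Lambda\to K\to 0$. The key point is that $R^1\tau_* K=R^1\tau_* K'=0$. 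I would check this on stalks by proper base change, reducing to a log point as in the proof of Proposition \ref{prop:push.exact}; there Lemma \ref{lemma:surjection} produces presentations of $K$ and $K'$ by (possibly infinite) direct sums of the $L_\lambda^\Lambda$, so the same iteration using $H^{>0}(X_\log,L_\lambda^\Lambda)=0$ and the finite cohomological dimension of $X_\log\cong(\bS^1)^r$ forces the vanishing. Feeding these vanishings into the long exact sequences of $R\tau_*$, and using that $\tau_*$ is left exact and commutes with finite direct sums, I obtain a right exact sequence
$$
\bigoplus_{j=1}^n \tau_* L_{\lambda_j}^\Lambda \to \bigoplus_{i=1}^m \tau_* L_{\lambda_i}^\Lambda \to \tau_* F\to 0.
$$
Since each $\tau_* L_\lambda^\Lambda$ is finitely presented by Proposition \ref{lemma:push.structure}, hence coherent ($\cO_X$ being coherent), and cokernels of maps of coherent sheaves are coherent, $\tau_* F$ is finitely presented.

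For the quasi-coherent case, take $F\in\Qcoh_\Lambda(X)$, so that locally on $X$ we have $F=\varinjlim_\alpha F_\alpha$ with $F_\alpha\in\FP_\Lambda(X)$ and the system filtered. As recalled in the proof of Proposition \ref{prop:push.exact}, $\tau$ is proper and the spaces locally compact, so $R\tau_*$ commutes with filtered colimits; as filtered colimits are exact, so does $\tau_*=R^0\tau_*$. Hence $\tau_* F\cong\varinjlim_\alpha\tau_* F_\alpha$ over the given open, each $\tau_* F_\alpha$ lying in $\FP(X)$ by the first part. Thus $\tau_* F$ is locally a filtered colimit of finitely presented sheaves, i.e. it lies in $\Qcoh(X)$.

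The step I expect to be the main obstacle is the right exactness of $\tau_*$ on the presentation, concretely the vanishing $R^1\tau_* K=0$ for the kernel $K$, which in general is neither finitely generated nor visibly quasi-coherent. Lemma \ref{lemma:surjection} is exactly what is needed to bypass this, by exhibiting $K$ (and then $K'$) as a cokernel of a map of sums of line bundles and thus making the cohomological argument of Proposition \ref{prop:push.exact} available; the remaining steps are formal consequences of Propositions \ref{prop:push.exact} and \ref{lemma:push.structure}.
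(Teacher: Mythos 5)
Your proposal is correct and follows essentially the same route as the paper, which simply invokes the exactness of $\tau_*$ on $\Qcoh_\Lambda(X)$ (Proposition \ref{prop:push.exact}) together with the finite presentation of $\tau_*L_\lambda^\Lambda$ (Proposition \ref{lemma:push.structure}) to push a local presentation forward, and then uses commutation of $\tau_*$ with filtered colimits for the quasi-coherent case. The only difference is that you spell out the vanishing $R^1\tau_*K=R^1\tau_*K'=0$ for the kernels via Lemma \ref{lemma:surjection} and proper base change, a point the paper leaves implicit in the word ``exact''; this is a legitimate and welcome elaboration rather than a different argument.
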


\begin{proof}
First note that, since $\tau$ is a proper map of topological spaces, the functor $\tau_*$ commutes with filtered colimits.

Since $\tau_*\colon \Qcoh_\Lambda(X)\to \Mod(\cO_X)$ is exact (Proposition \ref{prop:push.exact}) and $\tau_*L_\lambda^\Lambda$ is finitely presented (Proposition \ref{lemma:push.structure}), a local presentation for $F \in \FP_\Lambda(X)$ gives a local presentation of $\tau_*F$ as a cokernel of a map of coherent sheaves of $\cO_X$-modules, so $\tau_*F\in \Coh(X)$. The fact that $\tau_*$ commutes with filtered colimits lets us conclude also that $\tau_*\Qcoh_\Lambda(X)\subseteq \Qcoh(X)$.
\end{proof}

To conclude this section, we point out that the projection formula holds for the map $\tau\colon X_\log\to X$ and quasi-coherent sheaves on the Kato-Nakayama space. This is in analogy with the projection formula for the root stacks of \cite[Proposition 2.2.10]{talpo} and \cite[Proposition 4.16]{TV}.

\begin{proposition}[Projection formula]\label{lemma:projection.formula}
We have:
\begin{itemize}
\item for $F\in \Qcoh_\Lambda(X)$ and $G\in \Qcoh(X)$, the natural map $$\tau_*F\otimes_{\cO_X} G\to \tau_*(F\otimes_{\cO_\Lambda^\log} \tau^*G)$$ is an isomorphism, and
\item for $G\in \Qcoh(X)$ the natural map $G\to \tau_*\tau^*G$ is an isomorphism.
\end{itemize}
\end{proposition}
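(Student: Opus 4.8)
The plan is to reduce the second statement to the first, and to prove the first by a reduction argument down to a trivial base case, using the two facts already established: that $\tau_*$ is exact on $\Qcoh_\Lambda(X)$ (Proposition \ref{prop:push.exact}) and that the unit $\cO_X\to\tau_*\cO_\Lambda^\log$ is an isomorphism (Proposition \ref{lemma:push.structure}). First I would observe that the second bullet is the special case $F=\cO_\Lambda^\log$ of the first: since $\cO_\Lambda^\log=L_0^\Lambda$ is finitely presented, hence an object of $\Qcoh_\Lambda(X)$, the projection formula gives $\tau_*\cO_\Lambda^\log\otimes_{\cO_X} G\cong\tau_*(\cO_\Lambda^\log\otimes_{\cO_\Lambda^\log}\tau^*G)=\tau_*\tau^*G$, while the left-hand side is $\cO_X\otimes_{\cO_X} G\cong G$ by Proposition \ref{lemma:push.structure}. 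So it suffices to treat the first statement.

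For the first statement, fix $F\in\Qcoh_\Lambda(X)$ and regard both sides as functors of $G$ linked by the natural comparison map $\eta$. I would first check that all the functors involved commute with filtered colimits in both $F$ and $G$: the functor $\tau_*$ does because $\tau$ is proper, while $\tau^*$ and the two tensor products commute with arbitrary colimits. Since locally $G$ is a filtered colimit of coherent sheaves and $F$ a filtered colimit of finitely presented sheaves, and since being an isomorphism is a local condition, this reduces the problem to the case where $F$ is finitely presented and $G$ is coherent.

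In this reduced situation I would argue by right exactness. When $G=\cO_X$ one has $\tau^*\cO_X\cong\cO_\Lambda^\log$, so $F\otimes_{\cO_\Lambda^\log}\tau^*\cO_X\cong F$ and both sides of $\eta$ become $\tau_*F$, with $\eta_{\cO_X}$ the identity; by additivity $\eta$ is then an isomorphism on every finite free sheaf $\cO_X^{\oplus n}$. For a general coherent $G$ I would choose a presentation $\cO_X^{\oplus m}\to\cO_X^{\oplus n}\to G\to 0$ and apply $\eta$ to it. The source functor $G\mapsto\tau_*F\otimes_{\cO_X} G$ is right exact because tensoring is; the target functor $G\mapsto\tau_*(F\otimes_{\cO_\Lambda^\log}\tau^*G)$ is right exact because $\tau^*$ and $F\otimes_{\cO_\Lambda^\log}-$ are right exact and $\tau_*$ is exact on the quasi-coherent sheaves that occur (Proposition \ref{prop:push.exact}). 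Comparing the resulting right-exact sequences via the five lemma forces $\eta_G$ to be an isomorphism, and undoing the colimit reductions gives the general case.

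The step to watch is keeping control of quasi-coherence, since the exactness of $\tau_*$ is only available on $\Qcoh_\Lambda(X)$: one must verify that $F\otimes_{\cO_\Lambda^\log}\tau^*G$ lies in $\Qcoh_\Lambda(X)$ when $F$ is finitely presented and $G$ is coherent. This holds because $\tau^*G$ is then finitely presented, so locally $F\otimes_{\cO_\Lambda^\log}\tau^*G$ is a cokernel of a map $F^{\oplus b}\to F^{\oplus a}$, hence a finite colimit of quasi-coherent sheaves, and $\Qcoh_\Lambda(X)$ is stable under such colimits. The remaining checks — that $\eta$ is compatible with the chosen presentations and that the identifications in the base case are the natural ones — are routine.
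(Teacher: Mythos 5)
Your argument is correct and is essentially the paper's own proof: the paper simply states that the projection formula "follows formally from exactness of $\tau_*$ (Proposition \ref{prop:push.exact}) and $\tau_*\cO_\Lambda^\log\cong\cO_X$ (Proposition \ref{lemma:push.structure}), as in Olsson--Starr", and what you have written is precisely that formal argument unwound (reduction by filtered colimits, the base case $G=\cO_X$, and comparison of presentations using right exactness). The derivation of the second bullet as the case $F=\cO_\Lambda^\log$ of the first is also consistent with the paper's treatment.
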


The last item has been proven, in the case where $\Lambda=\overline{M}$, in \cite[Proposition 3.7 (3)]{illusie-kato-nakayama}.

\begin{proof}
This follows formally from exactness of $\tau_*$ (Proposition \ref{prop:push.exact}) and the fact that $\tau_*\cO_\Lambda^\log\cong \cO_X$ (Proposition \ref{lemma:push.structure}), for example as in \cite[Corollary 5.3]{olsson-starr}.
\end{proof}

\section{The correspondence}\label{sec:correspondence}

We are now ready to state and prove the main result of this paper.

\begin{theorem}\label{thm:main}
Let $X$ be a fine saturated log analytic space, with log structure $\alpha\colon M\to \cO_X$. Then for every $\overline{M}^\gp$-saturated quasi-coherent sheaf of monoids $\overline{M}\subseteq \Lambda\subseteq \overline{M}_\bR$ there is an equivalence of categories $\Phi\colon \Qcoh_\Lambda(X)\to \Par(X,\Lambda)$.  Moreover, the equivalence respects exactness, and restricts to the subcategories of finitely presented sheaves $\FP_\Lambda(X)$ and $\FP\Par(X,\Lambda)$.
\end{theorem}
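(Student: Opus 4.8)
The plan is to mirror the root stack correspondence recalled in (\ref{sec:reminder}), with the projection $\tau \colon X_\log \to X$ playing the role of the coarse moduli map and the line bundles $L_\lambda^\Lambda$ of (\ref{sec:rings.on.kn}) playing the role of the universal root bundles. First I would define the functor $\Phi$ by
$$\Phi(F)_\lambda := \tau_*\big(F \otimes_{\cO_\Lambda^\log} L_\lambda^\Lambda\big)$$
for $F \in \Qcoh_\Lambda(X)$ and $\lambda$ a section of $\Lambda^\wt$. For $\lambda \leq \lambda'$, writing $\lambda' = \lambda + c$ with $c$ a section of $\Lambda$, the isomorphism $L_{\lambda'}^\Lambda \cong L_\lambda^\Lambda \otimes_{\cO_\Lambda^\log} L_c^\Lambda$ and multiplication by the section $s_c$ give the transition maps $\Phi(F)_\lambda \to \Phi(F)_{\lambda'}$; for $m$ a section of $\overline{M}$ the identification $L_m^\Lambda \cong \tau^* L_m$ together with the projection formula (Proposition \ref{lemma:projection.formula}) supplies the isomorphisms $\rho^{\Phi(F)}_{m,\lambda}\colon \Phi(F)_{\lambda + m} \cong L_m \otimes_{\cO_X} \Phi(F)_\lambda$. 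Verifying the axioms (a)--(d) of Definition \ref{def:parabolic.sheaf} is then a routine diagram chase as in \cite{borne-vistoli}. That $\Phi$ lands in $\Par(X,\Lambda)$, is exact, and restricts to finitely presented objects follows from Proposition \ref{prop:qc.to.qc} (each piece is quasi-coherent, resp. finitely presented), from flatness of the $L_\lambda^\Lambda$ combined with exactness of $\tau_*$ (Proposition \ref{prop:push.exact}), and --- for the finiteness condition of Definition \ref{def:qcoh.par} --- from the fact that a finite presentation of $F$ involves only finitely many weights, so that $\Phi(F)$ is locally induced from a fine sub-weight system.

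For the quasi-inverse I would adapt the graded-module construction of (\ref{sec:reminder}) to the present setting. Since the formation of $\Phi$ is compatible with strict base change, and both sides localize on $X$, I can first pass to an open where there is a Kato chart $X \to \bC(P)$ and a chart $\Lambda_0 \to \Lambda(X)$; by Proposition \ref{prop:parabolic.chart} it then suffices to treat parabolic sheaves with weights in the constant monoid $\Lambda_0$. Over such a chart, the description of (\ref{sec:local.description}) identifies sheaves of $\cO_\Lambda^\log$-modules on $X_\log$ with $\bZ(P)$-equivariant modules on the universal cover $\wti{X}_\log$, where the sheaves $\wti{L}_\lambda^\Lambda$ are constant. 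The quasi-inverse $\Psi$ then assembles the pieces $E_\lambda$ (for $\lambda \in \Lambda_0^\gp$) into such an equivariant module: the fractional-power sections act through the parabolic transition maps $E_\lambda \to E_{\lambda'}$, the logarithmic directions are carried along by the $\cO_X^\log$-structure inherited from $\tau^*$, and the phase factors $e^{2\pi i g_\bR(\lambda)}$ combine with the deck action to give the descent data to $X_\log$.

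To show $\Phi$ and $\Psi$ are mutually quasi-inverse, I would reduce --- exactly as in the proof of Proposition \ref{prop:push.exact}, via proper base change --- to the fibres of $\tau$, i.e.\ to a log point $(\Spec \bC, P)$ with local ring $A$ of coefficients, where the completely explicit descriptions of Example \ref{examp:sheaves.std.log.pt} and the computation of $\tau_*L_\lambda^\Lambda$ in Proposition \ref{lemma:push.structure} are available. There the statement that the unit and counit of the adjunction are isomorphisms becomes a concrete identity about $\bZ(P)$-invariants of the local model $B$ of (\ref{sec:local.description}), checked much as in Proposition \ref{lemma:push.structure}. Exactness of the equivalence is then automatic from exactness of $\Phi$, and the restriction to finitely presented objects follows because, once $E$ is locally induced from a fine sub-weight system, only finitely many pieces are needed and $\Psi(E)$ acquires a finite presentation by the $L_\lambda^\Lambda$.

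The main obstacle, and the genuine point of departure from \cite{borne-vistoli}, is the presence of the logarithm variables coming from $\cO_X^\log$ together with the non-triviality of the fibres of $\tau$, which are real tori $(\bS^1)^r$ rather than points. In the root stack case $\pi_*$ simply reads off a graded piece, whereas here extracting $\Phi(F)_\lambda = \tau_*(F \otimes L_\lambda^\Lambda)$ is a cohomological operation, and the verification that it loses no information rests on the vanishing $R^1\tau_* = 0$ (Proposition \ref{prop:push.exact}) and on the precise identification of $\tau_* L_\lambda^\Lambda$ (Proposition \ref{lemma:push.structure}). Granting these two inputs, the bookkeeping that unit and counit are isomorphisms is essentially formal, if slightly intricate.
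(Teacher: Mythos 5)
Your proposal is correct and follows essentially the same route as the paper: $\Phi(F)_\lambda=\tau_*(F\otimes L_\lambda^\Lambda)$ with the projection formula supplying the $\rho$'s, a local quasi-inverse built as a $\bZ(P)$-equivariant graded module $\bigoplus_\lambda \wti{\tau}^{-1}E_\lambda\otimes_A\wti{\cO}_X^\log$ on the cover $\wti{X}_\log$ over a Kato chart, and the same three inputs (exactness of $\tau_*$, the computation of $\tau_*L_\lambda^\Lambda$, and reduction of finitely presented sheaves to fine sub-weight systems). The only cosmetic difference is that the paper verifies the unit by a direct invariant-section computation on $\wti{X}_\log$ and the counit by reducing to presentations by the $L_\lambda^\Lambda$, rather than phrasing everything as a proper-base-change reduction to fibres, but the underlying calculations coincide.
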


The proof will be an adaptation of the one of \cite[Theorem 6.1]{borne-vistoli}, that we briefly recalled in (\ref{sec:reminder}) above.

\begin{example}
Before proceeding with the proof in the general case it is useful to sketch it for the standard log point.

Let therefore $X$ be the standard log point $(\Spec \bC, \bN)$, and assume we have a $\bZ$-saturated monoid $\bN\subseteq \Lambda\subseteq \bR_+$. For the Kato-Nakayama space and its universal cover we have natural homeomorphisms $X_\log\cong \bS^1$ and $\wti{X}_\log\cong \bR$. Recall from Example \ref{examp:sheaves.std.log.pt} that the sheaf $\wti{\cO}_\Lambda^\log$ is the constant sheaf associated with the ring
$$
B=\bC[T,S^\lambda\mid \lambda\in \Lambda^+]/(S^{\lambda+\lambda'}-S^\lambda\cdot S^{\lambda'}, S^n \mid n\in \bN)
$$
with the $\bZ$-equivariant structure given by
\begin{align*}
k\cdot T&=T-2\pi i k\\
k\cdot S^\lambda& =e^{2 \pi i k\lambda}S^\lambda
\end{align*}
for $k\in \bZ$.
Note that the ring $B$ can alternatively be described as $\bC[T,S^\lambda\mid \lambda\in \Lambda\cap (0,1)]$ where $S^\lambda\cdot S^{\lambda'}=S^{\lambda+\lambda'}$ if $\lambda+\lambda'<1$, and $0$ otherwise. In the same way, the line bundle $\wti{L}_\lambda^\Lambda$ on $\wti{X}_\log$ is the constant sheaf associated with the $B$-module $t^{-\lambda}\cdot B$, where the $\bZ(P)$-equivariant structure is ``twisted'' by $e^{2\pi i (-) \lambda}$.

Let us define the functor $\Phi\colon \Qcoh_\Lambda(X)\to \Par(X,\Lambda)$: for $\lambda\in \Lambda^\wt$ and a quasi-coherent sheaf $F$ of $\cO_\Lambda^\log$-modules, we set $\Phi(F)_\lambda=\tau_*(F\otimes_{\cO_\Lambda^\log} L_\lambda^\Lambda)$. For $\lambda\leq \lambda'$ (i.e. $\lambda'=\mu+\lambda$ for some $\mu\in \Lambda$) there is a map $\Phi(F)_\lambda\to \Phi(F)_{\lambda'}$, induced by the multiplication $L_\lambda^\Lambda\to L_\lambda^\Lambda\otimes_{\cO_\Lambda^\log} L_\mu^\Lambda\cong L_{\lambda'}^\Lambda$ by the section $s_\mu$ of $L_\mu^\Lambda$. The projection formula for $\tau_*$ gives the isomorphisms $\rho^{\Phi(F)}_{a,\lambda}$ required by Definition \ref{def:parabolic.sheaf}, and the action of the functor $\Phi$ on arrows is clear.

Let us describe the quasi-inverse $\Psi\colon\Par(X,\Lambda)\to \Qcoh_\Lambda(X)$. Starting from a quasi-coherent parabolic sheaf $E\colon \Lambda^\wt\to \Qcoh(X)$, we consider  the direct sum $$\wti{E}=\bigoplus_{\lambda\in \Lambda^\gp\cap [0,1)} \wti{\tau}^{-1}E_\lambda$$ on $\wti{X}_\log$, and the tensor product $\wti{\Psi}(E)=\wti{E}\otimes_{\wti{\tau}^{-1}\cO_X}\wti{\cO}_X^\log$. We are going to equip this sheaf with a structure of $\bZ$-equivariant sheaf of $\wti{\cO}_\Lambda^\log$-modules. This will give the quasi-coherent sheaf of $\cO_\Lambda^\log$-modules $\Psi(E)$ corresponding to $E$ by descent along $\wti{X}_\log\to X_\log$.

First, we define the action of $k\in \bZ$ on a section $f \in  \wti{\tau}^{-1}E_\lambda$ as $k\cdot f=e^{2\pi i k\lambda}f$, and on the section $T$ of $\wti{\cO}_X^\log$ (corresponding to the ``formal logarithm'' of the generator of the monoid $\bN$) by $k\cdot T=T-2\pi i k$ as usual. As for the structure of $\wti{\cO}_\Lambda^\log$-module, using the description of this sheaf of rings recalled above we let $T$ act naturally on the factor $\wti{\cO}_X^\log$ and trivially on $\wti{E}$. The variable $S^{\lambda'}$ on the other hand acts trivially on $\wti{\cO}_X^\log$, and acts on $f\in  \wti{\tau}^{-1}E_\lambda$ as the pullback of the map $E_{(\lambda,\lambda+\lambda')}\colon E_\lambda\to E_{\lambda+\lambda'}$ coming from the structure of parabolic sheaf if $\lambda+\lambda'<1$, otherwise we also compose with the isomorphism $E_{\lambda+\lambda'}\cong E_{\lambda+\lambda'-1}\otimes L_1\cong E_{\lambda+\lambda'-1}$. Here $L_1$ denotes the line bundle associated with $1\in \bN$ via the DF structure $L\colon \bN\to \Div(X)$ of $X$, and it is canonically trivialized, since $X$ has a Kato chart. One can check that the $\bZ$-equivariant structure is compatible with this $\wti{\cO}_\Lambda^\log$-module structure.

Let us sketch the proof that $\Psi$ is a quasi-inverse for $\Phi$: starting with a parabolic sheaf $E\in \Par(X,\Lambda)$ and for $\lambda\in \Lambda$, we have $\Phi(\Psi(E))_\lambda=\tau_*(\Psi(E)\otimes_{\cO_\Lambda^\log} L_\lambda^\Lambda)$. This can also be computed as $\wti{\tau}_*^\bZ(\wti{\Psi}(E)\otimes_{\wti{\cO}_\Lambda^\log} \wti{L}_\lambda^\Lambda)$, where $^\bZ$ denotes taking $\bZ$-invariants. We want to show that for every $\lambda$ we have $E_\lambda\cong \wti{\tau}_*^\bZ(\wti{\Psi}(E)\otimes_{\wti{\cO}_\Lambda^\log} \wti{L}_\lambda^\Lambda)$ (and these isomorphisms will also be compatible with the maps giving the parabolic structure).

If $\lambda\in \Lambda^\gp\cap [0,1)$, then the map $E_\lambda\to \wti{\tau}_*^\bZ(\wti{\Psi}(E)\otimes_{\wti{\cO}_\Lambda^\log} \wti{L}_\lambda^\Lambda)$ is defined by sending $f\in E_\lambda$ to the $\bZ$-invariant section $$(f\otimes 1)\otimes t^{-\lambda}\in \left(\Big(\bigoplus_{\lambda\in \Lambda^\gp\cap [0,1)}\wti{\tau}^{-1}E_\lambda \Big)\otimes_{\wti{\tau}^{-1}\cO_X}\wti{\cO}_X^\log\right)\otimes_{\wti{\cO}_\Lambda^\log} \wti{L}_\lambda^\Lambda.$$ This is clearly injective, and it is not hard to prove that it is surjective (see the coming proof of Theorem \ref{thm:main} for details).
For a general $\lambda\in \Lambda^\wt$, let $\lambda_0 \in [0,1)$ be the unique element such that $\lambda\equiv\lambda_0 \; \mod \; \bZ$. Then we have $E_{\lambda}\cong E_{\lambda_0}$ and $\wti{L}_{\lambda}^\Lambda\cong  \wti{L}_{\lambda_0}^\Lambda$ (note that as for $L_1$, all line bundles $L_a$ coming from the DF structure on $X$ for $a\in \bZ$ are canonically trivialized). The existence of the desired isomorphism for $\lambda$ follows.

On the other hand, given a quasi-coherent sheaf $F$ of $\cO_\Lambda^\log$-modules on $X_\log$, we want to check that $\Psi(\Phi(F))\cong F$. By passing to $\wti{X}_\log$, it suffices to check that $\wti{\Psi}(\Phi(F))\cong \pi^{-1}(F)$ (where $\pi\colon \wti{X}_\log\to X_\log$ is the projection). Note that for every $\lambda\in \Lambda^\gp\cap [0,1)$ we have a map $$\wti{\tau}^{-1}\Phi(F)_\lambda=\wti{\tau}^{-1}(\wti{\tau}_*^\bZ(F\otimes_{\wti{\cO}_\Lambda^\log} \wti{L}_\lambda^\Lambda))\to \pi^{-1}(F)$$ described by $f\otimes t^{-\lambda}\mapsto f$. This induces a map $\wti{\Phi(F)}=\bigoplus_{\lambda\in \Lambda^\gp\cap [0,1)} \wti{\tau}^{-1}\Phi(F)_\lambda\to \pi^{-1}F$, and then a map
$$
\eta\colon \wti{\Psi}(\Phi(F))=\left(\bigoplus_{\lambda\in \Lambda^\gp\cap [0,1)} \wti{\tau}^{-1}\Phi(F)_\lambda\right)\otimes_{\wti{\tau}^{-1}\cO_X} \wti{\cO}_X^\log \to  \pi^{-1}F.
$$
This map is an isomorphism.

We refer to the proof of the theorem for a complete justification of this claim. Here we just explain briefly why it is surjective, if $F$ is in addition finitely presented. In that case we have a surjective morphism $\bigoplus_i \wti{L}_{\lambda_i}^\Lambda \to \pi^{-1}F$ on $\wti{X}_\log$, so given a section of $\pi^{-1}F$, this is going to be the image of some element $\sum_i a_i t^{-\lambda_i}$, where $a_i$ is a section of $\cO_\Lambda^\log$ (since $X$ is a log point, all these sheaves are constant sheaves on $\wti{X}_\log$, and the description of Example \ref{examp:sheaves.std.log.pt} applies). Let $f_i \in \pi^{-1}F$ be the image of the section $t^{-\lambda_i}$. We argue that every $f_i$ is in the image of the map $\eta$ above: in fact, the action of $\bZ$ on $f_i$ is given by $k\cdot f_i=e^{-2\pi i k\lambda_i}f_i$, and hence $f_i\otimes t^{\lambda_i}$ is a section of $\wti{\tau}_*^\bZ(F\otimes_{\wti{\cO}_\Lambda^\log} \wti{L}_{-\lambda_i}^\Lambda)$. If $\mu_i$ is the unique element in $\Lambda^\gp\cap [0,1)$ with $\mu_i\equiv -\lambda_i \; \mod \; \bZ$, then $\wti{L}_{-\lambda_i}^\Lambda \cong \wti{L}_{\mu_i}^\Lambda$ (recall once again that all line bundles $L_a$ on $X$ for $a\in \bZ$ are canonically trivialized), and hence there is a corresponding element $f_i\otimes t^{-\mu_i} \in \wti{\tau}_*^\bZ(F\otimes_{\wti{\cO}_\Lambda^\log} \wti{L}_{\mu_i}^\Lambda)$, whose image under $\eta$ is exactly the section $f_i$. This justifies surjectivity of $\eta$.
\end{example}

The proof of Theorem \ref{thm:main} in the general case will be along the same lines as the one sketched in the previous example. The construction of the quasi-coherent sheaf $\Psi(E)$ on $X_\log$ corresponding to the parabolic sheaf $E$ will be slightly more complicated though, because in general there is no nice set of lifts of the elements of $\Lambda^\gp/\overline{M}^\gp$ to $\Lambda$, as it happens when $\bN\subseteq \Lambda\subseteq \bR_+$, where we can just take $\Lambda^\gp\cap [0,1)$.

\begin{proof}[Proof of Theorem \ref{thm:main}] We will proceed in a few steps.

\medskip

\emph{Construction of $\Phi$}:

\medskip

Let us construct the functor $\Phi$. Recall that on $X_\log$ we have a symmetric monoidal functor $L^\Lambda\colon \Lambda^\gp \to \Div_{(X_\log,\cO_\Lambda^\log)}$ (see (\ref{sec:rings.on.kn})). Assume that we are given a quasi-coherent sheaf $F\in  \Qcoh_\Lambda(X)$ on $X_\log$, and we want to produce a parabolic sheaf with weights in $\Lambda$. Suppose $U\subseteq X$ is open, and take an object $\lambda \in \Lambda^\wt(U)$. Set 
$$
\Phi(F)_\lambda:=\tau_*(F|_{\tau^{-1}U}\otimes_{\cO_\Lambda^\log|_{\tau^{-1}U}} L^\Lambda_\lambda).
$$
For an arrow $\lambda \to \lambda'$ in $\Lambda^\wt(U)$, corresponding to $\mu \in \Lambda(U)$ such that $\lambda'=\mu+\lambda$, we define $\Phi(F)_\lambda \to \Phi(F)_{\lambda'}$ to be the morphism induced by multiplication by the section $s_{\mu}$ from $L^\Lambda_\lambda$ to $L^\Lambda_\lambda\otimes_{\cO_\Lambda^\log|_{\tau^{-1}U}} L^\Lambda_\mu\cong L^\Lambda_{\lambda'}$, by tensoring by $F$ and pushing forward to $X$.

If $\lambda'=m+\lambda$ with $m\in\overline{M}^\gp(U)$, we have
\begin{align*}
\Phi(F)_{\lambda'}=\Phi(F)_{\lambda+m}& = \tau_*(F|_{\tau^{-1}U}\otimes_{\cO_\Lambda^\log|_{\tau^{-1}U}} L^\Lambda_{\lambda+m}) \\ & \cong  \tau_*(F|_{\tau^{-1}U}\otimes_{\cO_\Lambda^\log|_{\tau^{-1}U}} L^\Lambda_\lambda\otimes_{\cO_\Lambda^\log|_{\tau^{-1}U}} \tau^*L_m) \\ &  \cong  \tau_*(F|_{\tau^{-1}U}\otimes_{\cO_\Lambda^\log|_{\tau^{-1}U}} L^\Lambda_\lambda)\otimes_{\cO_U} L_m \\ &  =  \Phi(F)_\lambda\otimes_{\cO_U} L_m,
\end{align*}
where we used the projection formula for $\tau$.  This gives the required isomorphism $\rho^{\Phi(F)}_{m,\lambda}$, and the map $\Phi(F)_\lambda\to \Phi(F)_{\lambda'}$ corresponds to multiplication by the section $s_m$ of $L_m$.

If $V\subseteq U\subseteq X$, then it is clear that $\Phi(F)_\lambda|_V \cong \Phi(F|_{V_\log})_{\lambda|_V}$ canonically, and  this restriction is also compatible with the isomorphisms $\rho^{\Phi(F)}_{m, \lambda}$. The other conditions in the definition of a parabolic sheaf are easily verified.

Finally, we check that $\Phi(F)$ is a quasi-coherent parabolic sheaf. This is a local question on $X$, so we can assume that $F$ is a filtered colimit of finitely presented sheaves, as in Definition \ref{def:qcoh}. Assume for the time being that we have proven that $\Phi$ sends finitely presented sheaves to finitely presented parabolic sheaves (Definition \ref{def:qcoh.par}).
Then, since $\Phi$ respects filtered colimits (because tensor product does, and $\tau_*$ does as well since $\tau$ is a proper map), $\Phi(F)$ will be a filtered colimit of finitely presented parabolic sheaves, and hence quasi-coherent. We will verify the assertion about finitely presented sheaves at the end of the proof.

We leave the construction of the action of the functor $\Phi\colon \Qcoh_\Lambda(X)\to \Par(X,\Lambda)$ on arrows to the reader.

\medskip

\emph{Local construction of the quasi-inverse $\Psi$}:

\medskip

To prove that the functor $\Phi$ is an equivalence we will construct an inverse locally on $X$ (observe that both $\Qcoh_{\Lambda|_{-}}(-)$ and $\Par(-,\Lambda|_{-})$ are stacks on the classical site of $X$). So we may assume that $X$ has a Kato chart $X\to \bC(P)$ for a toric monoid $P$, and that $\Lambda$ has a compatible chart $ \Lambda_0\to \Lambda(X)$, with $P\subseteq \Lambda_0\subseteq P_\bR$.

In this case we have fairly explicit descriptions of the sheaves $\cA_\Lambda$ and $\cO_\Lambda$ in terms of their pullback to the covering space $\wti{X}_\log$ of $X_\log$. Recall from (\ref{sec:local.description}) that in this situation we have a cartesian diagram
$$
\xymatrix{
\widetilde{X}_\log\ar[r]\ar[d] & \bH(P)\ar[d]\\
X_\log\ar[r]\ar[d] & (\bR_{\geq 0}\times \bS^1)(P)\ar[d]\\
X\ar[r] & \bC(P)
}
$$
where the two top vertical maps are covering spaces for the group $\bZ(P)=\Hom(P,\bZ)$. Moreover, as in the previous example, all line bundles $L_a$ for $a\in P^\gp$ are canonically trivialized, although this will not be an important point in the present proof. Finally, recall from Proposition \ref{prop:parabolic.chart} that there is a natural equivalence $\Par(X,\Lambda)\cong \Par(X,\Lambda_0)$.

Let us assume that $E\colon \Lambda_0^\wt\to \Qcoh(X)$ is a parabolic sheaf for $\Lambda_0$. We will produce a sheaf of $\widetilde{\cO}_\Lambda^\log$-modules on $\widetilde{X}_\log$ equipped with a $\bZ(P)$-equivariant structure. This will give a sheaf of $\cO_\Lambda^\log$-modules on $X_\log$ by descent.

Recall that $\widetilde{\tau}\colon \widetilde{X}_\log\to X$ denotes the natural projection. Also, in this situation the sheaf $\wti{\cO}_\Lambda$ is a quotient of the sheaf
$$
\wti{\cO}_{\Lambda_0}=\wti{\tau}^{-1}{\cO}_X \otimes_{\bC[P]}\bC[\Lambda_0],
$$
where $\bC[P]\to \wti{\tau}^{-1}{\cO}_X$ is obtained from the map $X\to \bC(P)$. The kernel of $\wti{\cO}_{\Lambda_0}\to \wti{\cO}_\Lambda$ is locally generated by elements of the form $t^\lambda-1$, where $\lambda$ is in the kernel of the map of sheaves of monoids $(\Lambda_0)_X\to \Lambda$.

Starting from $E$, we consider the direct sum $\bigoplus_{\lambda\in \Lambda_0^\gp} E_\lambda$ as a sheaf of $\cO_X$-modules on $X$. 
We pull this back to $\widetilde{X}_\log$ and obtain
$$
\widetilde{E}:=\bigoplus_{\lambda\in \Lambda_0^\gp} \widetilde{\tau}^{-1}E_\lambda,
$$
which is a sheaf of $\wti{\tau}^{-1}{\cO}_X$-modules.

Consider the sheaf of $\wti{\tau}^{-1}\cO_X$-algebras $A:=\bigoplus_{a\in P^\gp}\wti{\tau}^{-1}L_a\cong \bigoplus_{a\in P^\gp} t^{-a}\cdot \wti{\tau}^{-1}\cO_X$, where $t^{-a}$ is just a placeholder variable. First note that, on top of its natural $\wti{\tau}^{-1}{\cO}_X$-module structure, $\wti{E}$ is also a sheaf of $A$-modules, via the map $\wti{E}\otimes_{\cO_X}A\to \wti{E}$ constructed as follows: we can define
$$
\wti{E}\otimes_{\wti{\tau}^{-1}{\cO}_X}A=\bigoplus_{a\in P^\gp, \; \lambda \in \Lambda_0^\gp} (\wti{\tau}^{-1}E_\lambda \otimes_{\wti{\tau}^{-1}\cO_X} \wti{\tau}^{-1}L_a )\to \widetilde{E}=\bigoplus_{\lambda \in \Lambda_0^\gp} \widetilde{\tau}^{-1}E_\lambda
$$
by using the pullback via $\wti{\tau}$ of the given isomorphisms $E_\lambda \otimes_{\cO_X} L_a \cong E_{\lambda+a}$ for $a\in P^\gp$ and $\lambda\in \Lambda_0^\gp$.

Moreover, the sheaf $\wti{E}$ on $\wti{X}_\log$ has a $\bZ(P)$-equivariant structure: if $g\in \bZ(P)$ and $f$ is a section of $\wti{\tau}^{-1}E_\lambda$, we define $g\cdot f=e^{2\pi i g_\bR(\lambda)} f$ as a section of $\wti{\tau}^{-1}E_\lambda$.  Moreover, $\wti{E}$ has an action of the constant sheaf $\bC[\Lambda_0]$ on $\wti{X}_\log$: for a section $x^\lambda$ of $\bC[\Lambda_0]$, we define the action on the piece $\wti{\tau}^{-1}E_\mu$ to be the pullback via $\wti{\tau}$ of the map $E_\mu\to E_{\mu+\lambda}$ coming from the structure of parabolic sheaf. This is compatible (by property (a) in Definition \ref{def:parabolic.sheaf}) with the action of $\bC[P]$, induced by $\bC[P]\to A$, where this map sends $x^p$ to the section $t^{-p}\cdot f_p \in t^{-p}\cdot \wti{\tau}^{-1}\cO_X\subseteq A$ (recall from Notation \ref{notation:variables}  that $f_p$ denotes the image of $p \in P$ in $\cO_X$).
This makes $\widetilde{E}$ into a $\bZ(P)$-equivariant sheaf of $A\otimes_{\bC[P]}\bC[\Lambda_0]$-modules. 

Now consider the morphism of sheaves of algebras $A\to \wti{\cO}_X^\log$ determined by sending each $t^{-a}$ with $a\in P^\gp$ to $1\in \wti{\tau}^{-1}\cO_X\subseteq \wti{\cO}_X^\log$. 
The tensor product
$$
\wti{\Psi}(E):=\wti{E}\otimes_{A}\wti{\cO}_{X}^\log
$$
has a structure of a $\bZ(P)$-equivariant sheaf of $\wti{\cO}_X^\log\otimes_{\bC[P]}\bC[\Lambda_0]=\wti{\cO}_{\Lambda_0}^\log$-modules. This last operation has the effect of imposing that the action of $P^\gp$ is trivial (i.e. it identifies $e_\lambda\in E_\lambda$ with the image $e_\lambda \otimes t^{-p} \in E_\lambda\otimes_{\cO_X}L_p\cong E_{\lambda+p}$ for $p\in P^\gp$), and of ``adding the sections of $\cO_X^\log$ as coefficients''. 

\begin{remark}
Imposing that the action of $P^\gp$ is trivial might look strange, but should be compared with the following situation for root stacks: as recalled in (\ref{sec:reminder}), given a parabolic sheaf $E$ with weights in the Kummer extension $P\to Q$, to obtain a quasi-coherent sheaf on the root stack $$\sqrt[Q]{X}=[(X\times_{\Spec\bZ[P]}\Spec \bZ[Q])/\mu_{Q/P}]\cong [\underline{\Spec}_X(\cO_X[P^\gp]\otimes_{\bZ[P]}\bZ[Q])/\widehat{Q}]$$
one forms the sheaf $\bigoplus_{q\in Q^\gp}E_q$ on $X$, and equips it with a structure of $Q^\gp$-graded $\cO_X[P^\gp]\otimes_{\bZ[P]}\bZ[Q]$-module to obtain a $\widehat{Q}$-equivariant sheaf on $\underline{\Spec}_X(\cO_X[P^\gp]\otimes_{\bZ[P]}\bZ[Q])$.

However, the presentation that we are using for the Kato-Nakayama space is closer to the first expression of $\radice[Q]{X}$ as a quotient stack, and the way to obtain a sheaf for that presentation is to pullback along the zero section $X\to \underline{\Spec}_X(\cO_X[P^\gp])=X\times \widehat{P}$, where $\widehat{P}$ is the Cartier dual of $P^\gp$, i.e. the algebraic torus $\Hom(P^\gp,\bG_m)$. This corresponds to forcing the action of $P^\gp$ to be trivial, since the sheaf of ideals of $X$ in $X\times \widehat{P}$ is exactly generated by the elements $t^p-1$ in $\cO_X[P^\gp]$ for $p\in P^\gp$.
\end{remark}

By descent along $\pi\colon \wti{X}_\log\to X_\log$, this gives us a sheaf  of $\cO_{\Lambda_0}^\log$-modules on $X_\log$. Now observe that the action of $\cO_{\Lambda_0}^\log$ factors through $\cO_\Lambda^\log$: it is not hard to check that if $\lambda$ is a local section in the kernel of $(\Lambda_0)_X\to \Lambda$, then the action of $t^\lambda \in \cO_{\Lambda_0}$ is given by the identity on the pieces of the parabolic sheaf $E$, and hence also on the sheaf $\wti{E}$.
Denote the resulting sheaf of $\cO_\Lambda^\log$-modules by $\Psi(E)$.

It is straightforward to define the action on arrows, so that $\Psi$ becomes a functor $\Par(X,\Lambda)\to \Mod(\cO_\Lambda^\log)$. Note that $\Psi$ respects filtered colimits: in fact, we can check that $\wti{\Psi}$ does so, and this is clear, because direct sums and tensor products commute with filtered colimits.
Therefore, again assuming that we have proven that $\Psi$ sends finitely presented parabolic sheaves to finitely presented sheaves, it follows that if $E$ is a quasi-coherent parabolic sheaf, then $\Psi(E)$ is a quasi-coherent sheaf of $\cO_\Lambda^\log$-modules.

This defines the quasi-inverse $\Psi\colon \Par(X,\Lambda)\to \Qcoh_\Lambda(X)$.

\medskip

\emph{$\Psi$ is a quasi-inverse}:

\medskip

Let us check that, still in the case where there is a Kato chart $X\to \bC(P)$, the functors $\Phi$ and $\Psi$ are quasi-inverses. Consider a parabolic sheaf $E \in \Par(X,\Lambda_0)$, and the parabolic sheaf $\Phi(\Psi(E))$. For every $\lambda\in \Lambda_0^\wt$, the sheaf $\Phi(\Psi(E))_\lambda$ is the pushforward $\tau_*(\Psi(E)\otimes_{\cO_{\Lambda_0}^\log} L_\lambda^{\Lambda_0})\in \Qcoh(X)$. We can also compute this as $\wti{\tau}_*^{\bZ(P)}(\wti{\Psi}(E)\otimes_{\wti{\cO}_{\Lambda_0}^\log} \wti{L}_\lambda^{\Lambda_0})$ (the superscript $^{\bZ(P)}$ denotes ${\bZ(P)}$-invariants). 

Note that there is a natural $\cO_X$-linear injective morphism $$E_\lambda\to \wti{\tau}_*^{\bZ(P)}(\wti{\Psi}(E)\otimes_{\wti{\cO}_{\Lambda_0}^\log} \wti{L}_\lambda^{\Lambda_0}),$$ that sends $f\in E_\lambda$ to the section
$$
(f\otimes 1)\otimes t^{-\lambda}\in \left( \Big( \bigoplus_{\lambda\in \Lambda_0^\gp}\wti{\tau}^{-1}E_\lambda\Big) \otimes_{A} \wti{\cO}_{X}^\log\right)\otimes_{\wti{\cO}_{\Lambda_0}^\log} \wti{L}_\lambda^{\Lambda_0}.
$$
We claim that this map is an isomorphism.

Let $s=\sum_i (a_i\otimes b_i) \otimes c_i$ be a section of $\wti{\tau}_*^{\bZ(P)}(\wti{\Psi}(E)\otimes_{\wti{\cO}_{\Lambda_0}^\log} \wti{L}_\lambda^{\Lambda_0})$,  seen as a $\bZ(P)$-invariant section of $\wti{\Psi}(E)\otimes_{\wti{\cO}_{\Lambda_0}^\log} \wti{L}_\lambda^{\Lambda_0}$ on $\wti{X}_\log$.
In particular $a_i$ are ``homogeneous'' sections of $\wti{E}$ (i.e. in some $E_{\lambda_i}$), $b_i$ are sections of $\wti{\cO}_{\Lambda_0}^\log$, and $c_i$ are sections of $\wti{L}_\lambda^{\Lambda_0}$. By bilinearity and by moving the coefficients to the other factors, we can assume that $c_i=t^{-\lambda}$ (the local generator of the line bundle) for every $i$. Moreover, it is clear that if $s$ is $\bZ(P)$-invariant, then $b_i$ has to be in $\wti{\tau}^{-1}{\cO}_X \subseteq \wti{\cO}_{X}^\log$ for every $i$ (recall that locally $\wti{\cO}_X^\log$ is a polynomial ring with coefficients in $\wti{\tau}^{-1}\cO_X$, and $\bZ(P)$ acts on each ``indeterminate'' by adding integer multiples of $2\pi i$). By moving coefficients to the first factor, we can assume that $b_i=1$ for every $i$.

Hence we are reduced to a section of the form $\sum_i( a_i \otimes 1)\otimes t^{-\lambda}$. By the explicit form of the action of $\bZ(P)$, it is clear that this is invariant if and only if $e^{2\pi i g_\bR(\lambda_i-\lambda)}=1$ for every $i$ and every $g\in \bZ(P)$ (where $a_i\in E_{\lambda_i}$), or equivalently if $\lambda_i-\lambda$ is zero in $\Lambda_0^\gp/P^\gp$ for all $i$, i.e. $\lambda_i\equiv \lambda \; \mod\; P^\gp$. Finally, we claim that each term $(a_i\otimes 1) \otimes t^{-\lambda}$ is equal to some $(d_i\otimes 1)\otimes t^{-\lambda}$ with $d_i\in E_\lambda$. Indeed, since $\lambda_i-\lambda\in P^\gp$, by acting on $a_i$ via $P^\gp$ we can obtain a section of $E_\lambda$. But by construction, the action of $P^\gp$ on $\wti{\Psi}(E)$ is the identity.

This gives an isomorphism $\Phi(\Psi(E))_\lambda=\wti{\tau}_*^{\bZ(P)}(\wti{E}\otimes_{\wti{\cO}_{\Lambda_0}^\log} \wti{L}_\lambda^{\Lambda_0})\cong E_\lambda$ for every $\lambda \in \Lambda_0^\wt$. By how the action of the section $s_\lambda$ of $L_\lambda^{\Lambda_0}$ is defined on $\Psi(E)$ it is also clear that the map $E_\lambda\cong \Phi(\Psi(E))_\lambda\to \Phi(\Psi(E))_{\lambda'}\cong E_{\lambda'}$ coincides with the given one $E_\lambda\to E_{\lambda'}$ for each $\lambda \leq \lambda'$ in $\Lambda_0^\wt$. After easily checking that these isomorphisms are compatible with restrictions to open subsets and functorial in the parabolic sheaf $E$, we conclude that there is a functorial isomorphism of parabolic sheaves $\Phi(\Psi(E))\cong E$.

Conversely, let us start from a quasi-coherent sheaf $F$ on $X_\log$, and show that there is a natural isomorphism $F\cong \Psi(\Phi(F))$. For this we can pull everything back along $\pi\colon \wti{X}_\log\to X_\log$, and check that $\pi^{-1}F\cong \wti{\Psi}(\Phi(F))$. Note that in fact there is a functorial morphism $\wti{\Psi}(\Phi(F))\to \pi^{-1}F$, obtained from the natural maps $$\wti{\tau}^{-1}\Phi(F)_\lambda\to \pi^{-1}F$$ defined by sending a section $f\otimes t^{-\lambda}$ of $\wti{\tau}^{-1}(\wti{\tau}_*^{\bZ(P)}(F\otimes_{\wti{\cO}_{\Lambda_0}^\log} \wti{L}_\lambda^{\Lambda_0}))$ to the section $f$ of $\pi^{-1}F$.
By further localizing on $X$ we can assume that $F$ is a filtered colimit of finitely presented sheaves, and thus it suffices to prove that claim for $F$ finitely presented.

Assume that $F$ has a presentation
$$
\xymatrix{
\bigoplus_{j} L_{\lambda_j}^{\Lambda_0} \ar[r] & \bigoplus_{i} L_{\lambda_i}^{\Lambda_0} \ar[r] & F\ar[r] & 0
}
$$
with finitely many summands, that we can pull back to $\wti{X}_\log$, obtaining a presentation
$$
\xymatrix{
\bigoplus_{j} \wti{L}_{\lambda_j}^{\Lambda_0} \ar[r] & \bigoplus_{i} \wti{L}_{\lambda_i}^{\Lambda_0} \ar[r] & \pi^{-1}F\ar[r] & 0.
}
$$
Recall moreover that $\wti{\Psi}(\Phi(F))=\left(\bigoplus_{\lambda\in \Lambda_0^\gp} \Phi(F)_\lambda\right) \otimes_{A}\wti{\cO}_{X}^\log$, and 
$\Phi(F)_\lambda=\tau_*(F\otimes_{\cO_{\Lambda_0}^\log}L_\lambda^{\Lambda_0})$.

From the exactness of the various functors we obtain a commutative diagram with exact rows
$$
\xymatrix{
\wti{\Psi}(\Phi(\bigoplus_{j} {L}_{\lambda_j}^{\Lambda_0}))\ar[d] \ar[r] & \ar[d] \wti{\Psi}(\Phi(\bigoplus_{i} {L}_{\lambda_i}^{\Lambda_0})) \ar[r] & \wti{\Psi}(\Phi(F))\ar[r] \ar[d]& 0 \\
\bigoplus_{j} \wti{L}_{\lambda_j}^{\Lambda_0} \ar[r] & \bigoplus_{i} \wti{L}_{\lambda_i}^{\Lambda_0} \ar[r] & \pi^{-1}F\ar[r] & 0 .
}
$$
Finally, it is not hard to check that two leftmost vertical maps are isomorphisms: one is reduced to checking the statement for a single sheaf $L_\lambda^\Lambda$, and in this case it is an explicit calculation similar to the one in the proof of proposition \ref{lemma:push.structure}. Hence also the rightmost map is an isomorphism, as we wanted to prove.

\medskip

\emph{Exactness and finite presentation}:

\medskip

It is clear from exactness of $\tau_*$ (Proposition \ref{prop:push.exact}) that $\Phi$ respects exactness. Let us show that it restricts to the subcategories of finitely presented sheaves on both sides.

Assume that $F$ is a finitely presented sheaf of $\cO_\Lambda^\log$-modules on $X_\log$, as in Definition \ref{def:qcoh}. By localizing on $X$ we can assume that we have a presentation
$$
\xymatrix{
\bigoplus_{j} L_{\lambda_j}^\Lambda \ar[r] & \bigoplus_{i} L_{\lambda_i}^\Lambda \ar[r] & F\ar[r] & 0
}
$$
with finitely many summands, and that we have charts $P\to \overline{M}(X)$ and $\Lambda_0\to \Lambda(X)$. By Proposition \ref{prop:qc.to.qc} all the pieces $\Phi(F)_\lambda$ are finitely presented sheaves on $X$. Moreover, consider the sub-weight system $R\subseteq \Lambda_0^\wt$ given by the orbits for the $P^\gp$-action of the (finitely many) elements $\lambda_i$ and $\lambda_j$ appearing in the presentation above. We claim that $\Phi(F)$ is the induction of a parabolic sheaf with weights in $R$.

Specifically, we claim that $\Phi(F)$ is isomorphic to $\ind_R^{\Lambda_0^\wt}(G)$, where $G$ is the sheaf $\res_R^{\Lambda_0^\wt}(\Phi(F))$. To verify this, it is enough to prove that for every $\lambda\in \Lambda_0^\wt$ the map
$$
\varinjlim_{R\, \ni \, r \leq \lambda}\Phi(F)_r \to \Phi(F)_\lambda
$$
is an isomorphism.

By applying the two functors to the presentation of $F$ above (which stays exact), we see that it is enough to check the statement for the sheaves  $\bigoplus_{i} L_{\lambda_i}^\Lambda$ and $\bigoplus_{j} L_{\lambda_j}^\Lambda$, for which it is an easy computation.

Conversely, assume that $E$ is a finitely presented parabolic sheaf on $X$, and let us show that $\Psi(E)$ is finitely presented on $X_\log$. As above, we can localize on $X$ where there are charts $P\to \overline{M}(X)$ and $\Lambda_0\to \Lambda(X)$, and where $E$ comes via induction from a finite sub-system $R\subseteq \Lambda_0^\wt$. We can also assume that each of the (finitely many) sheaves $E_r$ with $r\in \Lambda_0\setminus \langle P^+\rangle$ admits a presentation as
$$
\xymatrix{
\cO_X^{\oplus J_r} \ar[r]^{f_r} & \cO_X^{\oplus I_r} \ar[r] & E_r\ar[r] & 0
}
$$
where $I_r$ and $J_r$ are finite sets. It is easy to check that the sheaf $\wti{\Psi}(E)$ on $\wti{X}_\log$ has a presentation of the form
$$
\xymatrix@C=1.5cm{
\bigoplus_r (\wti{L}_{-r}^\Lambda)^{\oplus J_r} \ar[r]^{\oplus (f_r)_{-r}} & \bigoplus_r (\wti{L}_{-r}^\Lambda)^{\oplus I_r} \ar[r] & \wti{\Psi}(E)\ar[r] & 0
}
$$
which shows that $\Psi(E)$ is finitely presented. The map $\bigoplus_r (\wti{L}_{-r}^\Lambda)^{\oplus I_r}\to \wti{\Psi}(E)$ is defined by sending the generators of the factor $(\wti{L}_{-r}^\Lambda)^{\oplus I_r}$ to the images in $\wti{\tau}^{-1}E_r$ of the generators of $\cO_X^{\oplus I_r}$ via the map in the presentation of $E_r$ (recall that  $\wti{\Psi}(E)=(\oplus_{\lambda\in \Lambda_0^\gp} \wti{\tau}^{-1}E_\lambda)\otimes_{A} \wti{\cO}_{X}^\log$).

This concludes the proof.
\end{proof}

\begin{remark}
One can check, using the construction of $\Phi$, that for two $\overline{M}^\gp$-saturated quasi-coherent sheaves of monoids $\overline{M}\subseteq \Lambda\subseteq \Lambda'\subseteq \overline{M}_\bR$, with induced weight systems $W=\Lambda^\wt$ and $W'=(\Lambda')^\wt$, the restriction $\res_W^{W'}$ and induction $\ind_W^{W'}$ on parabolic sheaves correspond respectively to equipping a sheaf of $\cO_{\Lambda'}^\log$-modules with the structure of $\cO_\Lambda^\log$-module coming from the natural map $\cO_\Lambda^\log\to \cO_{\Lambda'}^\log$, and to taking the tensor product $-\otimes_{\cO_\Lambda^\log}\cO_{\Lambda'}^\log$.
\end{remark}

\subsection{Comparison with root stacks}\label{sec:comparison.root}

To conclude, we compare the equivalence of Theorem \ref{thm:main} to the ones between parabolic sheaves and sheaves on root stacks, of \cite{borne-vistoli} and \cite{TV}.

Let $X$ be a fine saturated log scheme locally of finite type over $\bC$, and assume also that $X$ is proper. With this assumption, because of Proposition \ref{prop:gaga} we can compare the equivalence of Theorem \ref{thm:main} (that involves analytic sheaves) with the ones of \cite{borne-vistoli} and \cite{TV} (that are formulated for parabolic sheaves on schemes).

For every $n$ there is a canonical morphism of topological stacks $\phi_n\colon X_\log\to \radice[n]{X}_\top$, coming for example from the fact that the projection $\pi_n\colon \radice[n]{X}\to X$ induces an isomorphism $(\radice[n]{X})_\log\xrightarrow{\cong} X_\log$ (see the proof of \cite[Proposition 4.6]{TVnew}). These are compatible for different indices, and induce a morphism $\phi_\infty\colon X_\log\to \radice[\infty]{X}_\top$ (see \cite[Proposition 4.1]{knvsroot} or \cite[Section 3.4]{TVnew}).

By \cite[Theorem 6.1]{borne-vistoli} and \cite[Theorem 7.3]{TV} we have compatible equivalences of abelian categories $\Phi_n\colon \Qcoh(\radice[n]{X})\to \Par(X, \frac{1}{n}\overline{M})$ and $\Phi_\infty\colon \Qcoh(\radice[\infty]{X})\to \Par(X,\bQ)$. Strictly speaking, these equivalences are formulated and proven for parabolic sheaves on schemes, but the same reasoning should apply for complex analytic spaces. Alternatively, one can rely on GAGA results for proper Deligne-Mumford stacks, for example as formulated, in a more general setting, in \cite{porta}.

We will prove that these equivalences are compatible with the equivalence $\Phi$ of Theorem \ref{thm:main}, in the following sense. We have several structures on the root stacks that we can pull back via the morphisms $\phi_n$.
The stack $\radice[n]{X}_\top$ has a structure sheaf $\cO_n$, and a tautological DF structure $L_n\colon \pi_n^{-1}\frac{1}{n}\overline{M}\to \Div_{\radice[n]{X}}$ (if $n=\infty$, then $\frac{1}{n}\overline{M}$ denotes $M_{\bQ}$). These are all compatible with respect to pullbacks along the projections $\radice[m]{X}\to \radice[n]{X}$.
In the case of the infinite root stack, it is better to think about $\cO_\infty$ as $\varinjlim_n \cO_n$, so that $\phi_\infty^{*}\cO_\infty$ on the space $X_\log$ is $\varinjlim_n \phi_n^{*}\cO_n$.

Here when we write $\phi_n^{*}F$ for a sheaf $F$ of $\cO_n$-modules on $\radice[n]{X}_\top$, we mean the sheaf $\phi_n^{-1}F\otimes_{\tau^{-1}\cO_X} \cO_X^\log$ on $X_\log$.

\begin{proposition}\label{prop:root.stack.sheaves}
There is a sequence of compatible isomorphisms of sheaves of rings $\phi_n^{*}\cO_n\cong \cO^\log_{\frac{1}{n}\overline{M}}$, where we are using the notation of (\ref{sec:rings.on.kn}) for the sheaf on the right hand side. Moreover, the pullback DF structure $\phi_n^{*}L_n\colon \phi_n^{-1}(\pi_n^{-1}\frac{1}{n}\overline{M})\to \Div_{(X_\log,{\cO^\log_{\frac{1}{n}\overline{M}}})}$ is canonically isomorphic to the DF structure described in (\ref{sec:rings.on.kn}).
\end{proposition}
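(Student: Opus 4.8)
The plan is to verify both isomorphisms locally on $X$, and then check that the local comparison is canonical, so that it glues and is compatible with the transition maps for $n\mid m$. First I would reduce to the case of a Kato chart $X\to\bC(P)$ for a toric monoid $P$, so that $\overline{M}$ has chart $P$ and $\frac{1}{n}\overline{M}$ has chart $\frac{1}{n}P$, and recall from (\ref{sec:kn}) (following \cite{knvsroot,TVnew}) that $\phi_n$ is induced by the $(\bZ(P)\to\mu_n(P))$-equivariant map $\wti{X}_\log\to X\times_{\bC(P)}\bC(\frac{1}{n}P)$ together with the local presentation \eqref{eq:root.stack} of $\radice[n]{X}$ as the quotient stack $[(X\times_{\bC(P)}\bC(\frac{1}{n}P))/\mu_n(P)]$. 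Working on the atlas $\wti{X}_\log$ with its $\bZ(P)$-action lets me describe every sheaf in sight by a $\bZ(P)$-equivariant sheaf, reducing each claim to an identification of equivariant sheaves that then descends to $X_\log$.

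Next I would compute $\phi_n^{-1}\cO_n$ on $\wti{X}_\log$. The structure sheaf of the atlas is locally $\cO_X\otimes_{\bC[P]}\bC[\frac{1}{n}P]$, and pulling it back along $\wti{X}_\log\to X\times_{\bC(P)}\bC(\frac{1}{n}P)$ the coordinate $x^{\frac{1}{n}p}$ maps to the placeholder $t^{\frac{1}{n}p}$ of (\ref{sec:local.description}), since the defining map $\bH\to\bC$, $(x,y)\mapsto\radice[n]{x}\,e^{iy/n}$, is exactly the branch of the $n$-th root used there. This identifies $\phi_n^{-1}\cO_n|_{\wti{X}_\log}$ with $\wti{\cO}_{\frac{1}{n}P}=\wti{\tau}^{-1}\cO_X\otimes_{\bC[P]}\bC[\frac{1}{n}P]$ as sheaves of rings, and a deck transformation $g$ acts on $t^{\frac{1}{n}p}$ by $e^{2\pi i g_\bR(\frac{1}{n}p)}$, matching the equivariant structure $g\cdot t^\lambda=e^{2\pi i g_\bR(\lambda)}t^\lambda$ recorded in (\ref{sec:local.description}); for $\lambda\in\frac{1}{n}P$ one has $g_\bR(\lambda)\in\frac{1}{n}\bZ$, so this action factors through $\mu_n(P)$, consistently with the $\mu_n(P)$-action coming from the grading of $\bC[\frac{1}{n}P]$ on the root-stack side. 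Descending along $\wti{X}_\log\to X_\log$ then gives $\phi_n^{-1}\cO_n\cong\cO_{\frac{1}{n}\overline{M}}$, and tensoring with $\cO_X^\log$ over $\tau^{-1}\cO_X$, using $\phi_n^{*}(-)=\phi_n^{-1}(-)\otimes_{\tau^{-1}\cO_X}\cO_X^\log$, yields $\phi_n^{*}\cO_n\cong\cO^\log_{\frac{1}{n}\overline{M}}$. The compatibility for $n\mid m$ is immediate from the compatibility of the inclusions $\bC[\frac{1}{n}P]\hookrightarrow\bC[\frac{1}{m}P]$ with the projections $\radice[m]{X}\to\radice[n]{X}$ and with the covers.

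For the DF structures I would argue identically: the tautological line bundle $L_n(\frac{1}{n}p)$ on $\radice[n]{X}_\top$ is the weight-$\frac{1}{n}p$ graded piece of the atlas structure sheaf, which on $\wti{X}_\log$ pulls back to the free rank-one module $t^{-\frac{1}{n}p}\cdot\wti{\cO}_{\frac{1}{n}P}$ carrying the shifted action $g\cdot(t^{-\lambda}s)=e^{-2\pi i g_\bR(\lambda)}(t^{-\lambda}(g\cdot s))$; this is precisely the description of $\wti{L}_\lambda^{\frac{1}{n}\overline{M}}$ in (\ref{sec:local.description}), with the tautological section going to $s_\lambda$. Tensoring with $\cO_X^\log$ and descending gives the claimed isomorphism of symmetric monoidal functors $\phi_n^{*}L_n\cong L^{\frac{1}{n}\overline{M}}$, compatibly with the identification of structure sheaves. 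The main obstacle I anticipate is not any single computation but the careful bookkeeping needed to match the two equivariant pictures: one must pin down the precise branch of the $n$-th root and the sign conventions in the deck action so that the grading-induced $\mu_n(P)$-action on the root-stack side agrees on the nose with the monodromy $e^{2\pi i g_\bR(\lambda)}$ on the Kato-Nakayama side, and one must check that these identifications are independent of the chosen Kato chart so that they glue to global isomorphisms on $X_\log$; this last point follows as usual from compatibility with strict base change, already noted in (\ref{sec:extensions}).
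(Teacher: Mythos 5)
Your overall strategy coincides with the paper's: reduce to a Kato chart $X\to\bC(P)$, pass to the cover $\wti{X}_\log$ with its $\bZ(P)$-action, identify equivariant sheaves there, and descend. However, the central local identification is stated incorrectly, and the error sits exactly where the proposition has content. You claim that $\phi_n^{-1}\cO_n$ pulled back to $\wti{X}_\log$ is the sheaf $\wti{\tau}^{-1}\cO_X\otimes_{\bC[P]}\bC[\frac{1}{n}P]$, i.e.\ the chart-level sheaf $\wti{\cO}_{\Lambda_0}$ built from the \emph{constant} monoid $\Lambda_0=\frac{1}{n}P$, and you then silently replace this by $\cO_{\frac{1}{n}\overline{M}}$ upon descending. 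These two sheaves differ away from the deepest stratum, and it is the second one that appears in the statement. Concretely, take $X=\bA^1=\bC(\bN)$ with the log structure at the origin and $n=2$: the atlas of $\radice[2]{X}$ is $\bC_w$ with $w^2=z$, and $\wti{X}_\log=\bH\to\bC_w$ is $(x,y)\mapsto \sqrt{x}\,e^{iy/2}$. At a point of $\wti{X}_\log$ lying over $w_0=1$ the stalk of $\wti{\phi}_2^{-1}\wti{\cO}_2$ is $\cO_{\bC_w,1}\cong\cO_{\bC_z,1}$, whereas the stalk of $\wti{\tau}^{-1}\cO_X\otimes_{\bC[z]}\bC[w]$ is $\cO_{\bC_z,1}[w]/(w^2-z)\cong\cO_{\bC_z,1}\times\cO_{\bC_z,1}$, since $z$ is a unit there and already has two square roots. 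So the natural map determined by $x^{\frac{1}{n}p}\mapsto t^{\frac{1}{n}p}$ (which goes \emph{from} the tensor product \emph{to} $\wti{\phi}_n^{-1}\wti{\cO}_n$, not the other way, as written it reads backwards) is surjective on stalks but not injective: its kernel is the ideal generated by the sections $t^a-1$ for $a$ a local section of $\underline{\tfrac{1}{n}P}$ mapping to zero in $\frac{1}{n}\overline{M}$, i.e.\ it records the choice of branch of the root singled out by the point of $\wti{X}_\log$.

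The repair is exactly the paper's argument: by the description in (\ref{sec:local.description}), $\wti{\cO}_{\frac{1}{n}\overline{M}}$ is by definition the quotient of $\wti{\tau}^{-1}\cO_X\otimes_{\bC[P]}\bC[\frac{1}{n}P]$ by that ideal, so your map factors through $\wti{\cO}_{\frac{1}{n}\overline{M}}$, and one checks on stalks (both on and off the locus where the chart is exact) that the induced map $\wti{\cO}_{\frac{1}{n}\overline{M}}\to\wti{\phi}_n^{-1}\wti{\cO}_n$ is an isomorphism of $\bZ(P)$-equivariant sheaves. The same correction is needed for the line bundles $L_{\frac{1}{n}p}$: their pullbacks are the corresponding rank-one modules over the quotient sheaf, not free modules over $\wti{\cO}_{\Lambda_0}$. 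The remaining points of your write-up (the equivariance bookkeeping, the factorization of the $\bZ(P)$-action through $\mu_n(P)$, tensoring with $\cO_X^\log$, compatibility for $n\mid m$, and independence of the chart via strict base change) agree with the paper and are fine.
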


\begin{proof}
We can reduce to checking the claim on log schemes of the form $X=\Spec \bC[P]$ for a toric monoid $P$. Moreover, to prove the first assertion it is enough to prove that there are compatible isomorphisms $\phi_n^{-1}\cO_n\cong \cO_{\frac{1}{n}\overline{M}}$.

As briefly explained in (\ref{sec:kn}), we have a diagram
$$
\xymatrix{
\wti{X}_\log\times \bZ(P)\ar[r]\ar@<4pt>[d] \ar@<-4pt>[d] & \bC(\frac{1}{n}P)\times \mu_n(P)\ar@<4pt>[d] \ar@<-4pt>[d]\\
\wti{X}_\log\ar[r]^{\wti{\phi}_n}\ar[d] & \bC(\frac{1}{n}P)\ar[d]\\
X_\log\ar[r]^{\phi_n} & \radice[n]{\bC(P)}
}
$$
where $\wti{\phi}_n\colon \wti{X}_\log=\Hom(P,\bH)\to \bC(\frac{1}{n}P)=\Hom(\frac{1}{n}P, \bC)$ is given by composing $\frac{1}{n}P\cong P \to \bH$ with $\bH\to \bH\to \bC$, where the first map is $(x,y)\mapsto (\radice[n]{x}, y/n)$ and the second map is $(x,y)\mapsto x\cdot e^{iy}$. The homomorphism $\bZ(P)=\Hom(P,\bZ)\to \mu_n(P)=\Hom(P,\bZ/n\bZ)$ is also defined by composing with $\bZ\to \bZ/n\bZ$, and $\wti{\phi}_n$ is equivariant with respect to this homomorphism.

Let us denote by $\wti{\cO}_n$ the pullback of $\cO_n$ to $\bC(\frac{1}{n}P)$, and by $\wti{\pi}_n$ the projection $\bC(\frac{1}{n}P)\to \radice[n]{\bC(P)}\to  X$. We can reduce to showing that $\wti{\phi}_n^{-1}\wti{\cO}_{n}\cong \wti{\cO}_{\frac{1}{n}\overline{M}}$ as $\bZ(P)$-equivariant sheaves. This is clear from the fact that $\wti{\cO}_{\frac{1}{n}\overline{M}}$ is the quotient of $\wti{\tau}^{-1}\cO_X\times_{\bC[P]}\bC[\frac{1}{n}P]$ by the ideal $I$ generated by local sections of the form $t^a-1$, where $a\in \underline{\frac{1}{n}P}$ maps to zero in $\overline{M}_\bR$: we have a natural map $ \wti{\tau}^{-1}\cO_X\times_{\bC[P]}\bC[\frac{1}{n}P]\to \wti{\phi}_n^{-1}\wti{\cO}_n$ induced by $\wti{\tau}^{-1}\cO_X=\wti{\phi}_n^{-1}\wti{\pi}_n^{-1}\cO_X\to \wti{\phi}_n^{-1}\wti{\cO}_n$ and $\bC[\frac{1}{n}P]\to \wti{\cO}_n$, which factors through the quotient by the sheaf of ideals $I$. One can verify that the resulting map is an isomorphism, for example by looking at the stalks.

The assertion about the DF structure is proved similarly. 
\end{proof}

\begin{remark}
The preceding discussion explains \cite[Remark 4.7]{TVnew}: if we consider for a log algebraic stack $X$ its Kato-Nakayama space $X_\log$ as a ringed topological stack, equipped with the sheaf of rings $\cO_X^\log$, then the isomorphism $(\radice[n]{X})_\log\cong X_\log$ is not an isomorphism of ringed topological stacks, since the structure sheaf of $(\radice[n]{X})_\log$ is identified with the sheaf $\cO_{\frac{1}{n}\overline{M}}^\log$ on $X_\log$.
\end{remark}

Let us now check that the equivalence $\Phi$ of Theorem \ref{thm:main} is compatible with the analogous equivalences on the root stacks.

\begin{proposition}\label{prop:finite.weights.compatible}
The following diagram of functors is 2-commutative.
$$
\xymatrix@=.5cm{
\Qcoh(\radice[n]{X})\ar[rr]^{\phi_n^*} \ar[rd]_{\Phi_n} & &  \Qcoh_{\frac{1}{n}\overline{M}}(X_\log)\ar[ld]^{\Phi} \\
&\Par(X,\frac{1}{n}\overline{M}) & 
}
$$
Moreover all the functors restrict to the subcategories of finitely presented sheaves, and the diagrams for different $n$ are compatible with respect to pushforward and pullback along projections between root stacks, and induction and restriction between the categories of parabolic sheaves.

In particular, for every $n$ the pullback functor $\phi_n^*$ is an equivalence.
\end{proposition}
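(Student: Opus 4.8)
The plan is to produce a natural isomorphism $\Phi\circ \phi_n^{*}\cong \Phi_n$ of functors $\Qcoh(\radice[n]{X})\to \Par(X,\frac{1}{n}\overline{M})$, and then to read off the ``in particular'' from the fact that $\Phi$ and $\Phi_n$ are equivalences. Since both composites are compatible with the stack structure on the classical site of $X$, I would first reduce to the local situation where $X$ carries a Kato chart $X\to \bC(P)$ and $\frac{1}{n}\overline{M}$ has the chart $P\subseteq \frac{1}{n}P\subseteq P_\bR$, so that the covering space $\wti{X}_\log$ with its $\bZ(P)$-action and the atlas $\bC(\frac{1}{n}P)$ are available.

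The first ingredient is Proposition \ref{prop:root.stack.sheaves}, which identifies $\phi_n^{*}\cO_n\cong \cO^\log_{\frac{1}{n}\overline{M}}$ and matches the pullback DF structure $\phi_n^{*}L_n$ with the functor $L^{\frac{1}{n}\overline{M}}$ of (\ref{sec:rings.on.kn}), compatibly with the tautological sections; in particular $\phi_n^{*}\big((L_n)_\lambda\big)\cong L^{\frac{1}{n}\overline{M}}_\lambda$ for every weight $\lambda$. Its proof also gives $\phi_n^{-1}\cO_n\cong \cO_{\frac{1}{n}\overline{M}}$, so the paper's definition $\phi_n^{*}F=\phi_n^{-1}F\otimes_{\tau^{-1}\cO_X}\cO_X^\log$ coincides with the ringed-space pullback along $\phi_n\colon (X_\log,\cO^\log_{\frac{1}{n}\overline{M}})\to(\radice[n]{X}_\top,\cO_n)$, hence is symmetric monoidal. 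Using this, for each $\lambda\in(\frac{1}{n}\overline{M})^\wt$ I would rewrite
$$
\Phi(\phi_n^{*}F)_\lambda=\tau_*\big(\phi_n^{*}F\otimes_{\cO^\log_{\frac{1}{n}\overline{M}}}L^{\frac{1}{n}\overline{M}}_\lambda\big)\cong \tau_*\,\phi_n^{*}\big(F\otimes_{\cO_n}(L_n)_\lambda\big).
$$

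The heart of the argument is then the identity $\tau_*\,\phi_n^{*}G\cong (\pi_n)_*G$ for every quasi-coherent $G$ on $\radice[n]{X}_\top$, where $\pi_n\colon\radice[n]{X}_\top\to X$ is the projection and $\tau=\pi_n\circ\phi_n$; applied to $G=F\otimes_{\cO_n}(L_n)_\lambda$ it yields $\Phi(\phi_n^{*}F)_\lambda\cong (\pi_n)_*\big(F\otimes_{\cO_n}(L_n)_\lambda\big)=\Phi_n(F)_\lambda$. I expect this to be the main obstacle, precisely because it involves pushforward along the \emph{stacky} morphism $\phi_n$ and the tensoring with $\cO_X^\log$ built into $\phi_n^{*}$. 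Rather than prove $(\phi_n)_*\phi_n^{*}G\cong G$ abstractly, I would carry it out on the chart via the atlas: pass to $\wti{X}_\log$, where $\tau_*$ is computed as $\bZ(P)$-invariants of $\wti{\tau}_*$ and $\phi_n^{*}G$ becomes $\wti{\phi}_n^{*}\wti{G}\otimes\wti{\cO}_X^\log$ for the pullback $\wti{G}$ of $G$ to $\bC(\frac{1}{n}P)$, while $(\pi_n)_*G$ is computed as $\mu_n(P)$-invariants of $\wti{G}$ pushed to $X$. Using the equivariance of $\wti{\phi}_n$ along $\bZ(P)\to\mu_n(P)$ and the explicit invariant calculation of Proposition \ref{lemma:push.structure} (together with the cohomology vanishing of Lemma \ref{lemma:loc.free.vanishing.coho}, which guarantees $R^1\tau_*=0$ and makes everything exact), the $\bZ(P)$-invariants over $\wti{X}_\log$ match the $\mu_n(P)$-invariants over the atlas, naturally in $G$.

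Finally I would check that these isomorphisms respect the structure maps of the parabolic sheaves — both the maps induced by multiplication by $s_\mu$ for $\mu\in\frac{1}{n}\overline{M}$ and the isomorphisms $\rho_{m,\lambda}$ for $m\in\overline{M}^\gp$ — so that they assemble into an isomorphism of parabolic sheaves functorial in $F$. Restriction to finitely presented objects is then immediate, since $\phi_n^{*}$ is right exact and sends $\cO_n$ to $\cO^\log_{\frac{1}{n}\overline{M}}$, and both $\Phi$ and $\Phi_n$ restrict; and the compatibility for varying $n$ follows from the compatibility of the $\phi_n$ and of the tautological structures with the projections $\radice[m]{X}\to\radice[n]{X}$, matched against the restriction and induction functors $\res$, $\ind$ on parabolic sheaves. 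Since $\Phi$ (Theorem \ref{thm:main}) and $\Phi_n$ (\cite{borne-vistoli}, \cite{TV}) are equivalences, the $2$-commutative triangle exhibits $\phi_n^{*}\cong\Psi\circ\Phi_n$ as a composite of equivalences, hence an equivalence.
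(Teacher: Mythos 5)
Your proposal is correct and follows essentially the same route as the paper: reduce to a Kato chart, use Proposition \ref{prop:root.stack.sheaves} to identify $L^{\frac{1}{n}\overline{M}}_\lambda$ with $\phi_n^{*}(L_n)_\lambda$, and reduce everything to the identity $\tau_*\phi_n^{*}\cong(\pi_n)_*$. The only divergence is at that last step, where the paper recognizes $(X_\log,\cO^\log_{\frac{1}{n}\overline{M}})$ as the Kato-Nakayama space of $\radice[n]{X}$ and invokes the analogue of the projection formula to get $(\phi_n)_*\phi_n^{*}\cong\id$ (leaving the verification to the reader), whereas you propose to verify it directly by matching $\bZ(P)$-invariants on $\wti{X}_\log$ against $\mu_n(P)$-invariants on the atlas --- a more explicit justification of the same fact.
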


\begin{proof}
We can assume that $X$ has a global Kato chart $X\to \Spec\bC[P]$. Fix a quasi-coherent sheaf $F\in \Qcoh(\radice[n]{X})$. We have to check that $\Phi(\phi_n^*F)$ and $\Phi_n$ are the same parabolic sheaf on $X$ with weights in $\frac{1}{n}P$.

For an element $\frac{1}{n}a\in \frac{1}{n}P^\gp$, we have $$\Phi_n(F)_{\frac{1}{n}a}=(\pi_n)_*(F\otimes_{\cO_n} L_{\frac{1}{n}a})\, ,$$ where as above $\pi_n \colon \radice[n]{X}\to X$ is the projection and $L\colon \frac{1}{n}P^\gp\to \Pic({\radice[n]{X}})$ is the symmetric monoidal functor corresponding to the universal DF structure on the root stack. On the other hand $$\Phi(\phi_n^*F)_{\frac{1}{n}a}=\tau_*(\phi_n^*F\otimes_{\cO_{\frac{1}{n}\overline{M}}^\log} L_{\frac{1}{n}a}^{\frac{1}{n}\overline{M}})\, ,$$ where $L^{\frac{1}{n}\overline{M}}\colon  \frac{1}{n}P^\gp\to \Pic{(X_\log,\cO_{\frac{1}{n}\overline{M}}^\log)}$ is the analogous symmetric monoidal functor on the ringed space $(X_\log,\cO_{\frac{1}{n}\overline{M}}^\log)$.

Now note that $L_{\frac{1}{n}a}^{\frac{1}{n}\overline{M}}\cong \phi_n^*L_{\frac{1}{n}a}$, so that we have
$$
\Phi(\phi_n^*F)_{\frac{1}{n}a}=\tau_*(\phi_n^*(F\otimes_{\cO_n} L_{\frac{1}{n}a})).
$$
Finally, since $(X_\log, \cO_{\frac{1}{n}\overline{M}}^\log)=((\radice[n]{X})_\log, \cO_{\radice[n]{X}}^\log) \to \radice[n]{X}$ can be seen as the projection from the Kato-Nakayama space of $\radice[n]{X}$ and  $\tau_*=(\pi_n)_*\circ (\phi_n)_*$, the analogue of Proposition \ref{lemma:projection.formula} (whose proof we leave to the reader) implies that $(\phi_n)_*\circ \phi_n^*\cong \id$, and hence $\Phi_n(F)_{\frac{1}{n}a}\cong \Phi(\phi_n^*F)_{\frac{1}{n}a}$.

The remaining assertions are routinely checked.
\end{proof}

\begin{remark}
Let us also briefly comment on the relationship between our setup and the similar one of \cite{illusie-kato-nakayama}. In Section 3 of that paper, the authors consider the Kummer-\'etale topos $X_\ket$, which is equipped with a natural morphism $X_\ket\to X_\an$, and construct a morphism of topoi $\tau_\ket\colon X_\log\to X_\ket$ factoring $\tau\colon X_\log\to X_\an$. They consider a sheaf of rings $\cO_X^\klog$ on $X_\log$, defined as $\tau_\ket^{-1}\cO_X^\ket \otimes_{\tau^{-1}\cO_X} \cO_X^\log$, where 
$\cO_X^\ket$ is the structure sheaf of the Kummer-\'etale topos. In (3.2), the authors give a  description of $\cO_X^\klog$ that coincides with the one of Remark \ref{rmk:sheaves.description}, seeing it as being generated over $\cO_X$ by formal logarithms (which correspond to $\cO_X^\log$) and formal $n$-th roots for every $n$ (which correspond to $\cO_X^\ket$), of sections of $\overline{M}$.

The ringed topos $(X_\ket,\cO_X^\ket)$ is equivalent to the infinite root stack, in the following sense. The natural functor that sends a Kummer-\'etale morphism $U\to X$ to the map between infinite root stacks $\radice[\infty]{U}\to \radice[\infty]{X}$ induces a morphism of sites from an opportunely defined small \'etale site of $\radice[\infty]{X}$ to the Kummer-\'etale site of $X$, which gives a morphism of topoi $\rho \colon \radice[\infty]{X}\to X_\ket$ (where we identify the stack $\radice[\infty]{X}$ with its small \'etale topos), which is proved to be an equivalence in \cite[Theorem 6.21]{TV}.

Finally, as mentioned above, there is a natural isomorphism $\cO_X^\klog\cong \cO_{\overline{M}_\bQ}^\log$, and, by Proposition \ref{prop:root.stack.sheaves}, the last sheaf can also be seen as the pullback $\phi_\infty^*\cO_\infty$ along $\phi_\infty\colon X_\log\to \radice[\infty]{X}$.
\end{remark}

\bibliographystyle{plain}
\bibliography{biblio}

\end{document}